\newcommand{\mz}{\ensuremath{\mathbb Z}}
\newcommand{\mr}{\ensuremath{\mathbb R}}
\newcommand{\mh}{\ensuremath{\mathbb H}}
\newcommand{\shortmod}{\ensuremath{\negthickspace \negthickspace \negthickspace \pmod}}
\newcommand{\intR}{\int_{-\infty}^{\infty}}
\newcommand{\sumstar}{\sideset{}{^*}\sum}
\newcommand{\sumprime}{\sideset{}{'}\sum}
\newcommand{\e}[2]{e\left(\frac{#1}{#2}\right)}
\def\polhk#1{\setbox0=\hbox{#1}{\ooalign{\hidewidth
    \lower1.5ex\hbox{`}\hidewidth\crcr\unhbox0}}}
\theoremstyle{plain}
	\newtheorem{mytheo}{Theorem} [section]
	\newtheorem{myprop}[mytheo]{Proposition}
	\newtheorem{mycoro}[mytheo]{Corollary}
     \newtheorem{mylemma}[mytheo]{Lemma}
	\newtheorem{mydefi}[mytheo]{Definition}
	\newtheorem{myremark}[mytheo]{Remark}
\theoremstyle{remark}
\numberwithin{equation}{section}
\begin{document}

 \author{Matthew P. Young}
 \address{Department of Mathematics \\
 	  Texas A\&M University \\
 	  College Station \\
 	  TX 77843-3368 \\
 		U.S.A.}
 \email{myoung@math.tamu.edu}
 \thanks{This material is based upon work supported by the National Science Foundation under agreement No. DMS-1401008.  Any opinions, findings and conclusions or recommendations expressed in this material are those of the authors and do not necessarily reflect the views of the National Science Foundation.  
 }

\begin{abstract} 
We analyze certain bilinear forms involving $GL_3$ Kloosterman sums.  As an application, we obtain an improved estimate for the $GL_3$ spectral large sieve inequality.
\end{abstract}
 \title{Bilinear forms with $GL_3$ Kloosterman sums and the spectral large sieve}
\maketitle
\section{Introduction}
Given a family of $L$-functions, $\{ L(s,f) : f \in \mathcal{F} \}$, one of the most basic questions one can study is its orthogonality properties.  More precisely, if $L(s,f) = \sum_{n=1}^{\infty} \lambda_f(n) n^{-s}$, then one wishes to understand $\Delta_{\mathcal{F}}(m,n) :=\sum_{f \in \mathcal{F}} \lambda_f(m) \overline{\lambda_f(n)}$.  
For instance, when the family consists of Dirichlet characters, a formula for $\Delta_{\mathcal{F}}$ is given by orthogonality of characters.  For families of $GL_2$ forms, $\Delta_{\mathcal{F}}$ can be expanded into a sum of Kloosterman sums, by the Petersson/Bruggeman-Kuznetsov trace formula, which has seen extensive applications in number theory.

A large sieve inequality takes this analysis even futher, by bounding
\begin{equation}
\sum_{f \in \mathcal{F}} \Big| \sum_{n \leq N} a_n \lambda_f(n) \Big|^2,
\end{equation}
where $a_n$ are arbitrary complex coefficients.  By general principles, the best one may hope for is a bound of the form $(|\mathcal{F}| + N ) \sum_{n \leq N} |a_n|^2$.  One can view this as a much more robust form of orthogonality, probing the sequence of values of $\lambda_f(n)$ by correlations with arbitrary sequences $a_n$.
Large sieve inequalities are flexible and powerful estimates for bilinear forms having many applications.  For instance, the classical large sieve inequality for Dirichlet characters plays a key role in proving the Bombieri-Vinogradov theorem.  The $GL_2$ spectral large sieve has been valuable in understanding mean values of $L$-functions (in particular, to the fourth moment of the zeta function, which was Iwaniec's original application \cite{IwaniecLargeSieve}).  The reader is referred to \cite[Chapter 7]{IK} for a good introduction to large sieve inequalities.

The corresponding studies of higher rank families are still in their infancy.  
Bump, Friedberg, and Goldfeld \cite{BFG} developed many of the foundational properties of the $GL_3$ Poincare series, and in particular discovered the analogous sums to the $GL_2$ Kloosterman sums.  Recently, Blomer \cite{Blomer} 
succeeded in formulating a $GL_3$ Bruggeman-Kuznetsov formula with smooth bump functions appearing on the spectral side.  Blomer also derived a form of the spectral $GL_3$ large sieve inequality, but without a focus on obtaining a sharp result.  In principle, one may also derive a large sieve inequality from Goldfeld-Kontorovich's work \cite{GK}, but again this was not the focus of the authors and the result would not be numerically strong.

One of our main goals here is to obtain a stronger form of the $GL_3$ spectral large sieve inequality.  To state the results, we set up some of the necessary notation as in \cite{BFG} \cite{Goldfeld} \cite{Blomer}.  Consider the family of Hecke-Maass cusp forms $\phi_j$ for $SL_3(\mz) \backslash \mathcal{H}$, with spectral parameters $\nu_1, \nu_2$.    The Langlands parameters  associated to $\phi_j$ are $\alpha_1 = 2 \nu_1 + \nu_2$, $\alpha_2 = -\nu_1 + \nu_2$, and $\alpha_3 = -\nu_1 - 2 \nu_2$.  Blomer has shown that the number of $\phi_j$ with $\nu_1 = iT_1 + O(1)$, $\nu_2 = iT_2 + O(1)$, weighted by $R_j^{-1}$, where
\begin{equation}
\label{eq:Rjdef}
 R_j = \text{Res}_{s=1} L(s, \phi_j \times \overline{\phi_j}),
\end{equation}
is $\asymp T_1 T_2 (T_1 + T_2)$ (also see \cite[(1.4)]{Blomer}).  This is a natural weighting from the point of view of the Bruggeman-Kuznetsov formula.  Let $\lambda_j(m,n)$ denote the Hecke eigenvalues of $\phi_j$, with $\lambda_j(1,1) = 1$.  With an appropriate choice of scaling of Whittaker functions, then $\| \phi_j \|^2 \asymp R_j$ (e.g., see \cite[Lemma 1]{Blomer}).

\begin{mytheo}
\label{thm:spectralsumlocal}
 For an arbitrary complex sequence $a_n$, we have
 \begin{equation}
 \label{eq:spectralsumlocal}
  \sum_{\substack{\nu_1 = iT_1 + O(1) \\ \nu_2 = iT_2 + O(1)}} \frac{1}{R_j} \Big| \sum_{ n \leq N} a_n \lambda_j(n,1)
\Big|^2 \ll
\Big(T_1 T_2 (T_1 + T_2) +   T_1 T_2 N^{3/2} \Big)^{1+\varepsilon}
\sum_{n \leq N} |a_n|^2.
 \end{equation}
\end{mytheo}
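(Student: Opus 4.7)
The plan is to deduce the inequality from a bilinear-form estimate for $GL_3$ Kloosterman sums, applied to the output of Blomer's $GL_3$ Kuznetsov formula. First, I would open the square on the left-hand side of \eqref{eq:spectralsumlocal} and swap summations to obtain
\[ \sum_{m,n \leq N} a_m \overline{a_n}\, \Delta(m,n), \]
where $\Delta(m,n) = \sum_j R_j^{-1} \lambda_j(m,1) \overline{\lambda_j(n,1)}$ is the spectral correlator restricted to the box $\nu_1 = iT_1 + O(1)$, $\nu_2 = iT_2 + O(1)$. To apply the trace formula cleanly, I replace the sharp cutoff by a smooth nonnegative majorant $h(\nu_1, \nu_2)$ concentrated on this box; since $h \geq 0$ on the tempered spectrum, bounding $\Delta$ by its smoothed version loses nothing on the spectral side.

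Next I would apply Blomer's $GL_3$ Kuznetsov formula to rewrite the smoothed $\Delta(m,n)$ as a diagonal piece of size $\asymp T_1 T_2(T_1+T_2)$ on $m = n$ (matching the spectral density quoted before the theorem) plus contributions indexed by the Weyl elements $w_4, w_5, w_6$, each of the shape
\[ \sum_{c_1, c_2} \frac{S_w(m, 1; n, 1; c_1, c_2)}{c_1 c_2}\, \Phi_{w, h}(c_1, c_2; m, n), \]
where $\Phi_{w, h}$ denotes the integral transform attached to $h$ and $w$. The diagonal immediately yields the first term $T_1 T_2 (T_1 + T_2) \sum_n |a_n|^2$ of the claimed bound. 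The intermediate Weyl contributions $w_4, w_5$ reduce to classical $GL_2$-type Kloosterman sums and should be controlled by Weil bounds, fitting comfortably inside the stated right-hand side.

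The substantive step is the long Weyl element $w_6$. Pulling the $c_i$-sum outside gives exactly a bilinear form in $GL_3$ long-element Kloosterman sums, weighted by $a_m \overline{a_n}$, where the support of $\Phi_{w_6, h}$ localizes the moduli $c_1, c_2$ to a box determined by $T_1$ and $T_2$. Within this range I would invoke the paper's main bilinear estimate for such Kloosterman sums, which should yield the contribution $T_1 T_2 N^{3/2 + \varepsilon} \sum_n |a_n|^2$, producing the second term of the bound.

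The hardest point will be precisely this long Weyl estimate. No clean Weil-type pointwise bound is known for $GL_3$ long-element Kloosterman sums, and bounding each $S_{w_6}(m, 1; n, 1; c_1, c_2)$ individually would give a result much weaker than $N^{3/2}$. The argument must exploit the averaging over $(m, n)$ in an essential way, presumably by Cauchy--Schwarz in one of the two variables, opening the Kloosterman sum, and evaluating the resulting complete character sum modulo $c_1 c_2$ so as to harvest cancellation from the outer variable. This is the bilinear Kloosterman input promised by the title of the paper, and the rest of the proof is a packaging of that input.
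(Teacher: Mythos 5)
Your overall skeleton (Kuznetsov, diagonal of size $T_1T_2(T_1+T_2)$, negligible intermediate Weyl terms, bilinear estimate for the long element) matches the paper, but the final step as you describe it does not deliver the exponent $3/2$, and this is a genuine gap rather than a packaging issue. If you only localize the moduli to $D_1\asymp X_1$, $D_2\asymp X_2$ with $X_1,X_2\ll N/(T_1+T_2)$ (which is all the support of the transform gives you) and then apply the bilinear Kloosterman bound with arbitrary coefficients, the long-element contribution is of size
\begin{equation*}
\frac{T_1T_2(T_1+T_2)}{X_1X_2}\cdot X_1X_2\,(X_1^2+N)^{1/2}(X_2^2+N)^{1/2}
\;\asymp\; T_1T_2(T_1+T_2)\,(X_1^2+N)^{1/2}(X_2^2+N)^{1/2},
\end{equation*}
which at the endpoint $X_i\asymp N/(T_1+T_2)$ is $\gg T_1T_2(T_1+T_2)N$ when $T_1+T_2\geq N^{1/2}$ and $\gg T_1T_2N^2/(T_1+T_2)$ when $T_1+T_2\leq N^{1/2}$; neither is $\ll T_1T_2N^{3/2}$ except on the boundary $T_1+T_2\asymp N^{1/2}$. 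Two further ideas are needed. First, one must extract \emph{archimedean} cancellation from the oscillatory $y$-integrals in the transform: stationary phase forces the effective frequencies $Y_1,Y_2\ll N/(T_1+T_2)$, and this saving is only usable because the bilinear theorem is stated in terms of the quantity $M(\beta)$ (complete additive character sums to moduli $c$) rather than $\|\beta\|^2$, so that a Gallagher-type \emph{hybrid} large sieve can be applied to the combined character-times-$e(-ny/Y)$ sums, replacing $X_i^2+N$ by $X_i^2+N/(T_1+T_2)$. Second, even this only yields the weaker bound with second term $T_1T_2N^2/(T_1+T_2)$; to reach $N^{3/2}$ one must, in the regime $X_i^2>N/(T_1+T_2)$, use the second bilinear estimate with the truncation parameters $H_1,H_2$ (the version that trades the terms with large $qh_i$ for a Weil-type error $(X_1X_2)^{3/2}N(H_1^{-1}+H_2^{-1})$), optimized at $H_1=N^{1/2}X_1^{1/4}X_2^{-1/4}$, $H_2=N^{1/2}X_1^{-1/4}X_2^{1/4}$. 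Your proposal invokes only the single bilinear bound and treats the archimedean integrals as inert, so it proves at best the weaker variant, not the stated theorem. A minor additional inaccuracy: the intermediate Weyl terms are negligible because their integral transforms are $O((T_1+T_2)^{-100})$ for this choice of test function, not because of Weil-type bounds for the associated arithmetic sums.
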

For comparison, Blomer's proof of the spectral large sieve (implicitly) shows
\begin{equation}
\label{eq:BlomerBoundLocal}
  \sum_{\substack{\nu_1 = iT_1 + O(1) \\ \nu_2 = iT_2 + O(1)}} \frac{1}{R_j} \Big| \sum_{ n \leq N} a_n \lambda_j(n,1)
\Big|^2 \ll
\Big(T_1 T_2 (T_1 + T_2) +   T_1 T_2 N^{2} \Big)^{1+\varepsilon}
\sum_{n \leq N} |a_n|^2,
 \end{equation}
so Theorem \ref{thm:BilinearKloosterman} saves a potentially rather large factor $N^{1/2}$.   In fact, Blomer shows a dyadic bound:
\begin{equation}
\label{eq:BlomerBoundDyadic}
  \sum_{\substack{T_1 \leq |\nu_1| \leq 2T_1  \\ T_2 \leq |\nu_2| \leq 2T_2}} \frac{1}{R_j} \Big| \sum_{ n \leq N} a_n \lambda_j(n,1)
\Big|^2 \ll
\Big(T_1^2 T_2^2 (T_1 + T_2) +   T_1 T_2 N^{2} \Big)^{1+\varepsilon}
\sum_{n \leq N} |a_n|^2,
 \end{equation}
which saves a factor $T_1 T_2$ in the second, ``off-diagonal,'' term compared to \eqref{eq:BlomerBoundLocal}, via an oscillatory integral.  The proof of Theorem \ref{thm:spectralsumlocal} also uses an oscillatory integral for an extra savings, but it is a technical challenge to combine these two sources of savings and convert Theorem \ref{thm:spectralsumlocal} into a dyadic version with a secondary term of the same size.
It should be noted that Blomer's estimate arises by applying absolute values to the $GL_3$ Kloosterman sum, and estimating everything trivially (analogously to applying the Weil bound for Kloosterman sums).
One can view the quality of a large sieve inequality for a family $\mathcal{F}$ as a measure of how well one may average with the family.  As such, it is desirable to have strong results.   

There are also large sieve-type results in higher rank due to Duke and Kowalski \cite{DukeKowalski}, Venkatesh \cite{Venkatesh}, and Blomer-Buttcane-Maga \cite{BlomerButtcaneMaga}, but these study the conductor (or level) aspect.
By adapting the method of \cite[Theorem 4]{DukeKowalski}, 
one could use duality and the convexity bound for Rankin-Selberg $L$-functions on $GL_3 \times GL_3$ to attempt to obtain estimates on the left hand side of \eqref{eq:spectralsumlocal}.  However, this method requires
$N$ to be very large compared to $T_1 + T_2$ to give a strong bound.


The $GL_3$ Bruggeman-Kuznetsov formula relates these spectral sums to a sum of $GL_3$ Kloosterman sums.  The main technical contribution of this paper is to analyze multilinear forms with these Kloosterman sums.  We will be using the Bruggeman-Kuznetsov formula in the form of \cite[Proposition 4]{Blomer}\footnote{A corrected version of the formula can be found 
in \cite[Theorem 6]{BlomerButtcaneMaga}}.  The geometric side of this formula involves the $GL_3$ Kloosterman sums, which we now define.
The (long element) Kloosterman sum is
\begin{multline}
 S(m_1, m_2, n_1, n_2;D_1, D_2) = \mathop{\sum \sum}_{\substack{B_1, C_1 \shortmod{D_1} \\ B_2, C_2 \shortmod{D_2} \\ (B_1, C_1, D_1)  = (B_2, C_2, D_2) =1 \\ D_1 C_2 + B_1 B_2 + C_1 D_2 \equiv 0 \shortmod{D_1 D_2}} } e\Big(\frac{m_1 B_1 + n_1 (Y_1 D_2 - Z_1 B_2)}{D_1} \Big)
 \\
 e\Big(\frac{m_2 B_2 + n_2 (Y_2 D_1 - Z_2 B_1)}{D_2} \Big),
\end{multline}
where $Y_1, Y_2, Z_1, Z_2$ are defined (chosen) so that
\begin{equation}
 Y_1 B_1 + Z_1 C_1 \equiv 1 \pmod{D_1}, \qquad Y_2 B_2 + Z_2 C_2 \equiv 1 \pmod{D_2}.
\end{equation}
Bump, Friedberg, and Goldfeld \cite[Lemmas 4.1 and 4.2]{BFG} have shown that the above sum is well-defined,
meaning that the
value of the sum is independent of the choices of the $Y_i$ and $Z_i$, and
the coset representatives of the $B_i$ and $C_i$.

Define
\begin{equation}
 \mathcal{S} = \mathcal{S}(\alpha, \beta, \gamma) = \sum_{D_1, D_2, m, n}
\gamma_{D_1, D_2} \alpha_m \beta_n S(1,m,n,1,D_1, D_2),
\end{equation}
where $\alpha_m, \beta_n, \gamma_{D_1,D_2}$ are finite sequences.
For our application to the spectral large sieve, we are most interested in the case where $|\gamma_{D_1, D_2}| \leq 1$.   Especially in light of its connections to the large sieve, it is fundamental to estimate $\mathcal{S}$, but it is also of independent interest.
 Our main result is
\begin{mytheo}
\label{thm:BilinearKloosterman}
Suppose that $\alpha_m$, $\beta_n$, and $\gamma_{D_1, D_2}$ are complex sequences supported on $m, n \leq N$, $D_1 \leq X_1$, and $D_2 \leq X_2$.  Furthermore suppose that
$|\gamma_{D_1, D_2}| \leq 1$.  For an arbitrary finitely supported sequence $\beta = (\beta_n)$, let
\begin{equation}
\label{eq:MbetaDef}
M(\beta) =  \sum_{q \leq \min(X_1, X_2)} \sum_{d_1| q} \frac{d_1}{q} \sum_{\substack{c \leq \frac{X_1}{q} \\ (c,q) = 1}} \thinspace \sumstar_{t \shortmod{c}} 
 \Big|  
  \sum_{(n,q) = d_1} \beta_n e\Big(\frac{tn}{c}\Big)
 \Big|^2
\end{equation}
where $\Sigma^*$ denotes that $t$ is restricted by $(t,c) = 1$.
Then
\begin{equation}
\label{eq:BilinearKloosterman}
|\mathcal{S}| \ll (X_1 X_2)^{1+\varepsilon}  M(\alpha)^{1/2} M(\beta)^{1/2}.
\end{equation}
\end{mytheo}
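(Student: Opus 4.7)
The strategy is to unfold the $GL_3$ Kloosterman sum into its defining exponential sum, isolate the $m$- and $n$-dependence as additive characters at rational arguments, and then apply Cauchy-Schwarz to decouple $\alpha$ from $\beta$, recognizing the two resulting factors as (weighted) copies of $M(\alpha)$ and $M(\beta)$.

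I would open $S(1,m,n,1;D_1,D_2)$ by the defining sum. The $m$-dependence reduces to $e(mB_2/D_2)$, the $n$-dependence to $e(n(Y_1D_2-Z_1B_2)/D_1)$, and the remainder is a unimodular phase depending only on the outer data $(D_1,D_2,B_1,C_1,B_2,C_2)$, subject to $(B_i,C_i,D_i)=1$ and the compatibility congruence $D_1C_2+B_1B_2+C_1D_2\equiv 0 \pmod{D_1D_2}$. Writing $B_2/D_2$ in reduced form $t/c$ with $q=(B_2,D_2)$ exhibits the Farey fractions appearing in $M(\alpha)$; applying the transposition symmetry $S(1,m,n,1;D_1,D_2)=S(1,n,m,1;D_2,D_1)$ to the $\beta$-side puts it into the parallel shape with denominators bounded by $X_1$, matching $M(\beta)$.

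Next, Cauchy-Schwarz over the outer data splits $\mathcal{S}$ into two factors, one containing only $\alpha$ and one only $\beta$, each of the form $\sum_{\text{outer}} |A|^2$ with $A=\sum_m \alpha_m e(mt/c)$ (respectively the analogue for $\beta$). Re-parametrizing outer tuples by the reduced Farey fraction $t/c$ they produce, the multiplicity of tuples mapping to a given $(q,c,t)$ delivers the weight $d_1/q$ of $M$, while the compatibility congruence, reduced modulo the relevant pieces, forces the inner restriction $(m,q)=d_1$ (respectively $(n,q)=d_1$). For fixed $(D_1,D_2)$ the count of compatible residue quadruples is $\ll (D_1D_2)^{1+\varepsilon}$, and summing over $D_1\leq X_1$, $D_2\leq X_2$ yields the advertised $(X_1X_2)^{1+\varepsilon}$ factor after extracting the square root from Cauchy-Schwarz.

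The main obstacle is to check that the restriction $(n,q)=d_1$ in $M(\beta)$ comes out cleanly from the compatibility congruence: this requires tracking how $(Y_1D_2-Z_1B_2,D_1)$ is controlled by $Y_1B_1+Z_1C_1\equiv 1 \pmod{D_1}$ together with $D_1C_2+B_1B_2+C_1D_2\equiv 0 \pmod{D_1D_2}$, and similarly for the divisor structure that produces the weight $d_1/q$. A subsidiary issue is that $M$ is asymmetric in $X_1,X_2$, so the Cauchy-Schwarz has to be applied compatibly with the transposition symmetry used to recast the $\beta$-side into the $M(\beta)$ shape.
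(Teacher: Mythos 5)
Your outline misses the mechanism that actually produces the bound \eqref{eq:BilinearKloosterman}, and the key intermediate claim is quantitatively untrue. After you open $S(1,m,n,1,D_1,D_2)$ and apply Cauchy--Schwarz over \emph{all} the outer data $(D_1,D_2,B_1,C_1,B_2,C_2)$, everything that remains is a multiplicity count, i.e.\ pure positivity. The number of admissible tuples for fixed $(D_1,D_2)$ is of size $\asymp D_1D_2$ (e.g.\ $S(1,m,n,1,p,p)=p^2-p+1$ when $p\mid(m,n)$, so the $(p,p)$ sum has $\asymp p^2$ terms), so the two Cauchy factors together run over $\asymp (X_1X_2)^2$ tuples; with the natural equidistribution of the phases $B_2/D_2$ and $(Y_1D_2-Z_1B_2)/D_1$, this route gives at best $(X_1X_2)^{3/2}(X_1+N)^{1/2}(X_2+N)^{1/2}\|\alpha\|\,\|\beta\|$, i.e.\ essentially the Weil-type bound \eqref{eq:SboundTrivial}, a factor $(X_1X_2)^{1/2}$ short of the theorem. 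Moreover your claim that each Cauchy factor is $\ll (X_1X_2)^{1+\varepsilon}M(\cdot)$ fails outright: take $\beta$ a point mass, so the $\beta$-factor equals the total number of tuples $\asymp X_1^2X_2^2$, whereas $(X_1X_2)^{1+\varepsilon}M(\beta)\asymp X_1^{3+\varepsilon}X_2^{1+\varepsilon}$, which is smaller as soon as $X_2\gg X_1^{1+\varepsilon}$. The theorem's strength comes from genuine cancellation in the sum over the moduli, which absolute values cannot see: in the paper, after Cauchy--Schwarz the sum over the part $E_2$ of $D_2$ coprime to $D_1$ is extended to a complete residue system modulo $E_1$, and the product of the two $GL_2$ Kloosterman sums then collapses by the orthogonality identity \eqref{eq:KloostermanProductCompleteSum} into $E_1$ times additive characters in $n-n'$; that completion step is the source of the extra saving of roughly $\min(X_1,X_2)$, and nothing in your plan replaces it.

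The arithmetic shape of $M$ also cannot arise the way you suggest. Since $m,n$ enter the opened sum only through additive characters, the set of admissible residue tuples is independent of $m,n$, so no congruence bookkeeping can force the restriction $(n,q)=d_1$ or produce the weight $d_1/q$. In the paper these come from a multiplicative decomposition $D_1=qh_1E_1$, $D_2=qh_2E_2$ (with $q$ the product of primes dividing both moduli exactly once and $(E_1,E_2)=(E_1E_2,qh_1h_2)=1$), twisted multiplicativity of the $GL_3$ Kloosterman sum, and the explicit evaluation of the $(q,q)$ block via \eqref{eq:KloostermanEvaluationPrimePrime}: its \emph{value} $A(d_1,d_2,q)$ depends on $(n,q)$ and $(m,q)$ and is bounded by \eqref{eq:Abound}, which is exactly where $d_1/q$ originates. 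Finally, the moduli with a large common square-full factor are the hardest part of the proof: they are handled by taking the partial Fourier transform $\widehat{S}$ in the middle variables and proving the bound on $\mathcal{R}(t,p^k,p^l)$ (Lemma \ref{lemma:Rbound}, occupying all of Section \ref{section:Rbound}); your proposal has no counterpart to this analysis, so even granting the Farey reparametrization the argument would not close.
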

For some special choices of coefficients $\alpha_m, \beta_n$ (e.g. Dirichlet series coefficients of an $L$-function), one could potentially use alternative techniques to handle small $c$, which explains why we have stated Theorem \ref{thm:BilinearKloosterman} in this form.
For arbitrary coefficients, one cannot do better than the 
large sieve inequality (see \cite[Theorem 7.11]{IK}), which implies
\begin{equation}
 M(\beta) \ll (X_1^2 + N ) X_1^{\varepsilon} \| \beta \|^2,
\end{equation}
where here and throughout the paper we use the notation (for an arbitrary sequence $\beta$ of finite support)
\begin{equation}
 \| \beta\| = \Big(\sum_{n \in \mz} |\beta_n|^2 \Big)^{1/2}.
\end{equation}

Hence we immediately derive
\begin{mycoro}
\label{coro:Sbound}
 With the same conditions and notation as Theorem \ref{thm:BilinearKloosterman}, we have
 \begin{equation}
  |\mathcal{S}| \ll (X_1 X_2)^{\varepsilon} (X_1 X_2) (X_1^2 +  N)^{1/2} (X_2^2 + N)^{1/2} \|\alpha\| \thinspace \|\beta \|.
 \end{equation}
\end{mycoro}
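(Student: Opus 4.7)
The corollary follows from Theorem \ref{thm:BilinearKloosterman} by controlling $M(\alpha)$ and $M(\beta)$ via the classical additive large sieve inequality (Iwaniec--Kowalski \cite[Theorem 7.11]{IK}). The whole argument is routine; the deep input is Theorem \ref{thm:BilinearKloosterman} itself.

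First I would bound $M(\beta)$. After dropping the coprimality condition $(c,q)=1$ (for an upper bound), the innermost double sum over $c \leq X_1/q$ and primitive residues $t$ mod $c$ is precisely the setup of the additive large sieve. With $Q = X_1/q$ it yields
\[
\sum_{c \leq X_1/q}\sumstar_{t \shortmod c}\Big|\sum_{(n,q)=d_1}\beta_n e\Big(\frac{tn}{c}\Big)\Big|^2 \ll \Big(\frac{X_1^2}{q^2}+N\Big)\sum_{(n,q)=d_1}|\beta_n|^2.
\]
For each $n$, the condition $(n,q)=d_1$ picks out a unique divisor $d_1=(n,q)$ of $q$, so swapping the $q$ and $d_1$ sums with the $n$ sum in \eqref{eq:MbetaDef} gives
\[
M(\beta) \ll \|\beta\|^2 \max_{n \leq N}\sum_{q \leq \min(X_1,X_2)}\frac{(n,q)}{q}\Big(\frac{X_1^2}{q^2}+N\Big) \ll (X_1X_2N)^{\varepsilon}(X_1^2+N)\|\beta\|^2,
\]
where the last step uses $\sum_{q \leq Q}(n,q)/q \ll d(n)\log Q$ for the $N$-contribution and $\sum_q (n,q)/q^3 \ll d(n)$ for the $X_1^2/q^2$-contribution.

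For $M(\alpha)$ one argues in exactly the same way, except that the parameter $X_1$ appearing in \eqref{eq:MbetaDef} is replaced by $X_2$. The reason is the built-in symmetry of the long-element Kloosterman sum: in $S(1,m,n,1;D_1,D_2)$ the coefficient $\alpha_m$ is paired (through $e(m_2B_2/D_2)$) with the modulus $D_2 \leq X_2$, whereas $\beta_n$ is paired with $D_1 \leq X_1$. Equivalently, one may apply Theorem \ref{thm:BilinearKloosterman} to the rearrangement of $\mathcal{S}$ produced by the identity $S(1,m,n,1;D_1,D_2)=S(m,1,1,n;D_2,D_1)$ (swapping the roles of $(\alpha,X_1)$ and $(\beta,X_2)$) and take the geometric mean of the two resulting bounds. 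Either way, $M(\alpha) \ll (X_1X_2N)^\varepsilon(X_2^2+N)\|\alpha\|^2$.

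Substituting these two bounds into \eqref{eq:BilinearKloosterman} immediately produces
\[
|\mathcal{S}| \ll (X_1X_2)^{1+\varepsilon}(X_1^2+N)^{1/2}(X_2^2+N)^{1/2}\|\alpha\|\,\|\beta\|,
\]
as required. The only point of care is the asymmetry in which $X_i$ appears in $M(\alpha)$ versus $M(\beta)$, which is bookkeeping rather than a genuine obstacle.
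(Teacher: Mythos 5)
Your proposal is correct, and its core is exactly the paper's derivation: the corollary is obtained by feeding the classical additive large sieve bound for $M(\cdot)$ into Theorem \ref{thm:BilinearKloosterman}, and your bookkeeping with the weights $d_1/q$ (using $\sum_{d_1\mid q}\tfrac{d_1}{q}\sum_{(n,q)=d_1}|\beta_n|^2=\sum_n\tfrac{(n,q)}{q}|\beta_n|^2$ and $\sum_{q}\tfrac{(n,q)}{q}\ll d(n)\log$, $\sum_q \tfrac{(n,q)}{q^3}\ll d(n)$) is a correct expansion of what the paper compresses into the single display $M(\beta)\ll (X_1^2+N)X_1^{\varepsilon}\|\beta\|^2$ followed by ``we immediately derive.'' The one place where you go beyond the paper is the treatment of the $X_1$ versus $X_2$ asymmetry: as literally stated, \eqref{eq:MbetaDef} puts $X_1$ into both $M(\alpha)$ and $M(\beta)$, which would only give the symmetric bound $(X_1X_2)^{1+\varepsilon}(X_1^2+N)\|\alpha\|\,\|\beta\|$; the paper leaves implicit that the $\alpha$-factor really comes with $X_2$ (this is visible in the proof of Lemma \ref{lemma:SmallH1Bound}, where $S_2$ has $E_2,h_2\mid D_2$ and hence moduli up to $X_2/q$). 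Your two remedies are both legitimate: reading off the intended pairing ($\alpha_m$ with $D_2$, $\beta_n$ with $D_1$), or, more self-containedly, using the BFG symmetry (Properties 4.4 and 4.5, also invoked in Lemma \ref{lemma:R'property}, which give $S(1,m,n,1;D_1,D_2)=S(1,n,m,1;D_2,D_1)$) to apply the theorem a second time with $(\alpha,X_1)$ and $(\beta,X_2)$ interchanged and then take the geometric mean of the two resulting bounds. The latter derives the stated asymmetric corollary from the theorem as a black box, which is a small but genuine improvement in rigor over the paper's one-line deduction; the only cosmetic caveat is that your intermediate bounds carry an extra $N^{\varepsilon}$ (from $d(n)$), the same harmless imprecision already present in the paper's displayed bound for $M(\beta)$.
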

For the applications to the $GL_3$ spectral large sieve inequality, the formulation in Theorem \ref{thm:BilinearKloosterman} is better, because one can obtain additional savings using a hybrid large sieve inequality, which includes an archimedean integral.

For some ranges of the parameters, the following result is superior to Theorem \ref{thm:BilinearKloosterman}:
\begin{mytheo}
\label{thm:BilinearKloostermanHversion}
 Let $1 \leq H_1 \leq X_1$, $1 \leq H_2 \leq X_2$.  Then
 \begin{equation}
 \label{eq:SboundWithHParameters}
  |\mathcal{S}| \ll (X_1 H_2 + X_2 H_1) (X_1 X_2)^{\varepsilon} M^*(\alpha)^{1/2} M^*(\beta)^{1/2} + (X_1 X_2)^{3/2+\varepsilon} N^{1+\varepsilon} \| \alpha \| \| \beta\| (H_1^{-1} + H_2^{-1}),
 \end{equation}
 where $M^*(\beta)$ is defined as in \eqref{eq:MbetaDef}, but with $q$ restricted by $q \leq \min(H_1, H_2)$.
\end{mytheo}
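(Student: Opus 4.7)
The plan is to decompose $\mathcal{S}$ based on the size of the gcd $q := (D_1, D_2)$ relative to $H := \min(H_1, H_2)$, treating the two pieces separately. Write $\mathcal{S} = \mathcal{S}^{\flat} + \mathcal{S}^{\sharp}$ for the portions with $q \leq H$ and $q > H$ respectively.

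For the tail $\mathcal{S}^{\sharp}$, I would apply a Weil-type estimate $|S(1,m,n,1;D_1,D_2)| \ll (D_1 D_2)^{1/2+\varepsilon}$ (available from the work of Larsen and Stevens, see \cite{BFG}) together with the bound $\sum_{m,n \leq N} |\alpha_m \beta_n| \leq N \|\alpha\|\|\beta\|$. Writing $D_i = q D_i'$ with $D_i' \leq X_i/q$ and summing produces
\begin{equation*}
|\mathcal{S}^{\sharp}| \ll N\|\alpha\|\|\beta\|(X_1 X_2)^{\varepsilon} X_1^{3/2} X_2^{3/2} \sum_{q > H} q^{-2} \ll N\|\alpha\|\|\beta\|(X_1 X_2)^{3/2+\varepsilon}/H,
\end{equation*}
which is absorbed into the error term in \eqref{eq:SboundWithHParameters} since $H^{-1} \leq H_1^{-1} + H_2^{-1}$.

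For the main piece $\mathcal{S}^{\flat}$, decompose the $(D_1, D_2)$ range via the pointwise inequality
\begin{equation*}
\mathbf{1}_{D_1 \leq X_1,\, D_2 \leq X_2} \leq \mathbf{1}_{D_1 \leq H_1} + \mathbf{1}_{D_2 \leq H_2} + \mathbf{1}_{D_1 > H_1,\, D_2 > H_2}.
\end{equation*}
On the first two indicator pieces, Theorem \ref{thm:BilinearKloosterman} applies with $X_1$ (resp.\ $X_2$) replaced by $H_1$ (resp.\ $H_2$), giving prefactors $H_1 X_2$ and $X_1 H_2$. The key observation is that the constraint $q \leq H$ already in force truncates the outer $q$-sum inside $M$ to $q \leq \min(H_1, H_2)$, so that $M$ is replaced by $M^*$ throughout. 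Summing the two contributions produces the desired main term $(X_1 H_2 + X_2 H_1)(X_1 X_2)^{\varepsilon} M^*(\alpha)^{1/2} M^*(\beta)^{1/2}$.

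The main obstacle will be the corner piece $\mathcal{S}^{\flat}|_{D_1 > H_1,\, D_2 > H_2}$: a direct application of Theorem \ref{thm:BilinearKloosterman} here yields only $X_1 X_2 \cdot M^*$, too large for the main term, while the naive Weil-type trivial bound gives $(X_1 X_2)^{3/2} N$ without the requisite $H^{-1}$ saving. I expect to handle this by going inside the proof of Theorem \ref{thm:BilinearKloosterman}, exploiting the lower bounds $D_i' = D_i/q > H_i/q$ in the refined coprime range via a further Cauchy-Schwarz in the $(D_1', D_2')$ variables to extract the missing $H^{-1}$ factor needed to absorb this piece into either the main term or the error term.
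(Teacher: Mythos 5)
Your decomposition by the size of the gcd $q=(D_1,D_2)$ has two genuine problems. First, the tail estimate for $q>H$ rests on the pointwise bound $|S(1,m,n,1,D_1,D_2)|\ll (D_1D_2)^{1/2+\varepsilon}$, which is not the correct Weil-type bound: by Lemma \ref{lemma:KloostermanUpperBound} the bound carries an extra factor $\left((D_1,D_2)\,(mn,[D_1,D_2])\right)^{1/2}$, and this loss is genuine (for $D_1=D_2=q$ squarefree the sum has size about $q$ in the generic case and $q^2$ when $q\mid(m,n)$). Rerunning your computation with the correct bound, the factor $(D_1,D_2)^{1/2}=q^{1/2}$ turns $\sum_{q>H}q^{-2}$ into $\sum_{q>H}q^{-3/2}$, so you only obtain $(X_1X_2)^{3/2+\varepsilon}N\|\alpha\|\|\beta\|H^{-1/2}$, which is not absorbed by the error term in \eqref{eq:SboundWithHParameters} (nor by the main term). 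No amount of exploiting the rarity of large $(mn,[D_1,D_2])$ repairs this, because the deficiency occurs already when $(mn,q)=1$; what is needed is the exact evaluation of the Kloosterman sum on the squarefree, equal-valuation part of the common modulus (Lemma \ref{lemma:Sevaluation}, leading to the factor $A(d_1,d_2,q)$ in \eqref{eq:SboundSeparatedSomewhat}), so that Weil-type losses are only incurred on the ``squarefull'' part $h_1,h_2$, whose sparsity is what actually produces the $H^{-1}$ saving (Lemma \ref{lemma:LargeH1Bound}, via \eqref{eq:h1BigSum}).

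Second, and more structurally, the corner piece $D_1>H_1$, $D_2>H_2$ with small gcd is left unproven, and the proposed fix (a further Cauchy--Schwarz in $D_1',D_2'$ to ``extract $H^{-1}$'') cannot work: for nearly coprime moduli of size $X_1,X_2$ the bilinear form is genuinely as large as \eqref{eq:S'bound}, so there is no $H^{-1}$-type smallness to extract, and this piece is not an error-sized object. The paper avoids the corner entirely by truncating not the moduli $D_i$ but only their common-support parts $g_i=qh_i$ (after the factorization $D_i=qh_iE_i$): the split is $|\mathcal{S}|\le \mathcal{S}_{qh_1>H_1}+\mathcal{S}_{qh_2>H_2}+\mathcal{S}_{qh_i\le H_i}$, the last piece giving the main term of \eqref{eq:SboundWithHParameters} via Lemma \ref{lemma:SmallH1Bound} (with the $E_i$-moduli still running up to $X_i/q$, which is why the prefactor is $X_1H_2+X_2H_1$ while $M^*$ keeps $c\le X_1/q$), and the first two giving the error term via Lemma \ref{lemma:LargeH1Bound}. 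Note also a minor point: your indicator inequality $\mathbf{1}_{D_1\le X_1,D_2\le X_2}\le \mathbf{1}_{D_1\le H_1}+\mathbf{1}_{D_2\le H_2}+\mathbf{1}_{D_1>H_1,D_2>H_2}$ cannot be applied termwise to a signed bilinear form; you would need the exact partition, which exists but does not change the difficulties above. As it stands, the proposal does not yield the theorem.
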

Remarks.  In case $H_1 = X_1$, $H_2 = X_2$ the first term in \eqref{eq:SboundWithHParameters} reduces to Theorem \ref{thm:BilinearKloosterman} (and the second term may be dropped).  For the opposite extreme $H_1 = H_2 = 1$, the latter term corresponds to the ``Weil bound'' (see \eqref{eq:SboundTrivial} below) while the first term may be dropped. The restrictions $1 \leq H_i \leq X_i$ may be dropped from the statement of Theorem \ref{thm:BilinearKloostermanHversion}, however then the result is worse than Theorem \ref{thm:BilinearKloosterman} or \eqref{eq:SboundTrivial} below.

Again, the large sieve implies
\begin{mycoro}
 \label{coro:SboundWithHParameters}
  Let $1 \leq H_1 \leq X_1$, $1 \leq H_2 \leq X_2$.  Then
 \begin{multline}
  |\mathcal{S}| \ll \Big[(X_1 H_2 + X_2 H_1)  (X_1^2 + N)^{1/2} (X_2^2 + N)^{1/2}  
  \\
  + \frac{(X_1 X_2)^{3/2} N}{H_1} + \frac{(X_1 X_2)^{3/2} N}{H_2}
  \Big] (X_1 X_2 N)^{\varepsilon} \| \alpha \| \thinspace \| \beta \|.
 \end{multline}
\end{mycoro}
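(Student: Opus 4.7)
The plan is to deduce Corollary \ref{coro:SboundWithHParameters} as an immediate consequence of Theorem \ref{thm:BilinearKloostermanHversion} combined with the standard large sieve bounds on $M^*(\alpha)$ and $M^*(\beta)$.

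First I observe that $M^*(\beta) \leq M(\beta)$ trivially, since $M^*$ is obtained from $M$ by restricting the outer summation to $q \leq \min(H_1, H_2)$ while leaving all other (non-negative) terms unchanged. Hence the same large sieve estimate that the paper employs to deduce Corollary \ref{coro:Sbound} from Theorem \ref{thm:BilinearKloosterman} (namely \cite[Theorem 7.11]{IK}, applied to the Farey fractions $t/c$ with $c \leq X_1/q$ and $(t,c) = (c,q) = 1$, together with the observation that $\sum_{d_1\mid q}(d_1/q)\sum_{(n,q)=d_1}|\beta_n|^2 \leq \|\beta\|^2$) gives
\begin{equation*}
 M^*(\beta) \ll (X_1^2 + N) (X_1 X_2)^{\varepsilon} \| \beta \|^2,
\end{equation*}
and symmetrically, by the invariance of the Kloosterman sum under the swap $(D_1, m_1, n_1) \leftrightarrow (D_2, m_2, n_2)$ which interchanges the roles of $X_1$ and $X_2$ in the $\alpha$-analogue of $M^*$,
\begin{equation*}
 M^*(\alpha) \ll (X_2^2 + N) (X_1 X_2)^{\varepsilon} \| \alpha \|^2.
\end{equation*}

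Substituting these two bounds into \eqref{eq:SboundWithHParameters} yields
\begin{align*}
 |\mathcal{S}| &\ll (X_1 H_2 + X_2 H_1) (X_1 X_2 N)^{\varepsilon} (X_1^2 + N)^{1/2} (X_2^2 + N)^{1/2} \| \alpha \| \thinspace \| \beta \| \\
 &\quad + (X_1 X_2)^{3/2 + \varepsilon} N^{1+\varepsilon} (H_1^{-1} + H_2^{-1}) \| \alpha \| \thinspace \| \beta \|,
\end{align*}
and upon rewriting the second term as $\frac{(X_1 X_2)^{3/2} N}{H_1} + \frac{(X_1 X_2)^{3/2} N}{H_2}$ and absorbing the separate $\varepsilon$-powers into a single $(X_1 X_2 N)^{\varepsilon}$, this is exactly the estimate claimed in Corollary \ref{coro:SboundWithHParameters}.

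Since all the analytic content resides in Theorem \ref{thm:BilinearKloostermanHversion}, there is essentially no obstacle in this deduction: the corollary is a mechanical combination of that theorem with a textbook large sieve inequality. The one small point worth noting is that the stronger constraint $q \leq \min(H_1, H_2)$ in $M^*$ does not yield any quantitative improvement over the corresponding bound on $M$, because the dominant contribution to the large sieve estimate already comes from the range $q = O(1)$, where $c$ runs over essentially the full interval $[1, X_1]$.
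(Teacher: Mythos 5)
Your deduction is correct and is essentially the paper's own proof: the paper disposes of this corollary with the single line ``Again, the large sieve implies,'' i.e.\ one bounds $M^*(\alpha)\le M(\alpha)$ and $M^*(\beta)\le M(\beta)$ by the stated large sieve estimate (with the roles of $X_1,X_2$ swapped for $\alpha$) and substitutes into \eqref{eq:SboundWithHParameters}, exactly as you do. One small caveat about your parenthetical justification of $M^*(\beta)\ll (X_1^2+N)(X_1X_2)^{\varepsilon}\|\beta\|^2$: applying \cite[Theorem 7.11]{IK} for each fixed $q$ and then only using $\sum_{d_1\mid q}(d_1/q)\sum_{(n,q)=d_1}|\beta_n|^2\le\|\beta\|^2$ gives $\sum_{q}\big((X_1/q)^2+N\big)\|\beta\|^2$, whose $N$-part is $N\min(H_1,H_2)\|\beta\|^2$, not $N\|\beta\|^2$; to recover the claimed bound one must also exploit that the inner sum runs over multiples of $d_1$ (so, since $(d_1,c)=1$, it is a sum of effective length $N/d_1$, which is how the factor $Y_2/d_1$ arises in the paper's own application of Lemma \ref{lemma:variantsieve}), or equivalently use a divisor-type bound on $\sum_q (n,q)/q$. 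With that standard refinement your closing heuristic (dominant contribution from small $q$) becomes a proof, and the rest of the argument is precisely the paper's.
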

Remarks. For $N$ large, say $N \gg X_1^2 + X_2^2$, Corollary \ref{coro:SboundWithHParameters} is optimized with $H_1 = X_1^{3/4} X_2^{1/4}$, $H_2 = X_1^{1/4} X_2^{3/4}$, and reduces to a bound 
that can be seen to be inferior to Corollary \ref{coro:Sbound}.  On the other hand, if $N \ll \min(X_1^2, X_2^2)$, then the optimal bound occurs with $H_1 = N^{1/2} X_1^{1/4} X_2^{-1/4}$, $H_2 = N^{1/2} X_2^{1/4} X_1^{-1/4}$, and gives
\begin{equation}
 |\mathcal{S}| \ll (X_1 X_2)^{5/4} N^{1/2} (X_1^{1/2} + X_2^{1/2}) (X_1 X_2 N)^{\varepsilon} \| \alpha \| \thinspace \| \beta \|.
\end{equation}

Recently, Buttcane \cite{Buttcane1} \cite{Buttcane2} has developed Mellin-Barnes integral representations for the weight functions occuring on the Kloosterman sum side of the Bruggeman-Kuznetsov formula.  Blomer and Buttcane \cite{BlomerButtcane} have used this formulation, with additional ideas, to obtain a subconvexity result for $GL_3$ Maass forms in the spectral aspect.  It could be interesting to investigate if these alternative integral representations  lead to additional savings in the spectral large sieve.  Our preliminary calculations indicate this could be rather complicated, and since our main focus here is on the arithmetical aspects of the problem (rather than the archimedean integrals), we leave this for another occasion.

\section{Acknowledgments}
I thank Valentin Blomer, Jack Buttcane, and the referee for numerous suggestions and corrections that improved the quality of the paper.

\section{Heuristic remarks}
\subsection{On theorem \ref{thm:BilinearKloosterman}}
\label{section:HeuristicBilinearKloosterman}
We include a few remarks of an informal nature indicating that Theorem \ref{thm:BilinearKloosterman} is in a somewhat robust form, at least, under the assumption that $X_1$ and $X_2$ are not highly asymmetrical in size.

The Weil-type bound of Steven \cite{Stevens} (see \cite[Lemma 3]{Blomer}) implies
\begin{equation}
 \sum_{D_1 \leq X_1} \sum_{D_2 \leq X_2} |S(1,m,n,1, D_1, D_2)| \ll (X_1 X_2)^{3/2 + \varepsilon} (mn)^{\varepsilon},
\end{equation}
and therefore the trivial bound applied to $\mathcal{S}$ along with Cauchy's inequality gives
\begin{equation}
\label{eq:SboundTrivial}
 |\mathcal{S}| \ll (X_1 X_2)^{3/2 + \varepsilon} N^{1+\varepsilon} \| \alpha \| \| \beta \|.
\end{equation}
Therefore, for large $N$, Corollary \ref{coro:Sbound} saves an additional factor $(X_1 X_2)^{1/2}$ over \eqref{eq:SboundTrivial}.

In case $(D_1, D_2) =
1$, then from \cite[Property 4.9]{BFG}, we have
\begin{equation}
\label{eq:KloostermanFactorizationCoprimeModuli}
S(m_1, m_2, n_1, n_2, D_1, D_2) = S(D_2 m_1, n_1, D_1) S(D_1 m_2, n_2,
D_2) 
\end{equation}
so the contribution to $\mathcal{S}$ from $(D_1,
D_2) = 1$, say
$\mathcal{S}'$, is
\begin{equation}
\mathcal{S}' = \sum_{\substack{m, n \\ (D_1, D_2) = 1 }} \gamma_{D_1, D_2}   \alpha_m \beta_n 
 S(D_2 , n, D_1)
 S(D_1 , m, D_2).
\end{equation}
It could so happen that $\gamma_{D_1, D_2} $ always has the same sign as $ \sum_{m,n} \alpha_m \beta_n S(D_2 , n, D_1) S(D_1 , m, D_2)$, so it should be essentially impossible
to do better than bounding $\mathcal{S}'$ as follows:
\begin{equation}
|\mathcal{S}'| \leq \sum_{(D_1, D_2)=1} \Big|\sum_n \beta_n S(D_2, n, D_1) \sum_{m} \alpha_m S(D_1  , m, D_2)   \Big|.
\end{equation}
By an application of Cauchy's inequality, we have
\begin{equation}
\label{eq:S'inequality}
 |\mathcal{S'}| \leq \Big(\sum_{(D_1, D_2)=1} \Big|\sum_n \beta_n S(D_2, n, D_1) \Big|^2 \Big)^{1/2} (\dots)^{1/2},
\end{equation}
with the dots representing a similar term.  
Next we drop the condition $(D_2, D_1) = 1$ and extend the sum over $D_2$ to $D_2 \leq M D_1$ where $M$ is the unique integer satisfying $X_2 \leq M D_1 < X_2 + D_1$ (this extension is presumably rather wasteful in case $X_2$ is much smaller than $X_1$).
Then we have
\begin{equation}
\label{eq:KloostermanProductCompleteSum}
 \sum_{D_2 \leq M D_1}  S(n_1, D_2, D_1) S(n_2, D_2, D_1) = M D_1 \sumstar_{x \shortmod{D_1}} e\Big(\frac{x(n_1 -n_2)}{D_1}\Big), 
\end{equation}
so the first expression in parentheses on the right hand side of \eqref{eq:S'inequality} satisfies
\begin{equation}
 ( \dots) \leq (X_1 + X_2)  \sum_{D_1 \leq X_1} \thinspace \sumstar_{x \shortmod{D_1}} \Big| \sum_{n} \beta_n e\Big(\frac{xn}{D_1}\Big) \Big|^2.
\end{equation}
A similar bound holds for the second factor in \eqref{eq:S'inequality}, of course.
Therefore, by the large sieve inequality, we have
\begin{equation}
\label{eq:S'bound}
 |\mathcal{S}'| \leq (X_1 + X_2) (X_1^2 + N)^{1/2} (X_2^2 + N)^{1/2} \| \alpha \| \| \beta\|.
\end{equation}
This gives a limitation to the final estimates we wish to obtain for $\mathcal{S}$.  One observes that the bound \eqref{eq:S'bound} is superior to that of Corollary \ref{coro:Sbound} by a factor $\min(X_1, X_2)$, which arises in the proof from considering $D_1$ and $D_2$ with a common factor.

The opposite extreme of $(D_1, D_2) = 1$ is
$D_1 = D_2$.  For simplicity consider $D_1 = D_2 = p$, prime.  
In this case, we have (see \cite[Property 4.10]{BFG} or Lemma \ref{lemma:Sevaluation} below)
\begin{equation}
\label{eq:KloostermanEvaluationPrimePrime}
 S(1,m,n,1,p,p) = S(m,0;p) S(n,0;p) + p.
\end{equation}
Therefore, if $p | (m,n)$ the Kloosterman sum is of order $p^2$, while if $p \nmid m$, $p \nmid n$, it is of order $p$.  The term $p$ gives the dominant contribution, because in the situation when the Kloosterman sum has order $p^2$ (i.e., $p|(m,n)$), the rarity in $m$ and $n$ has relatively frequency $p^{-2}$, which is a net saving by a factor $p$.  These terms give to $\mathcal{S}$ an amount, say $\mathcal{S}''$, given by
\begin{equation}
 \mathcal{S}'' = \sum_{p \leq \min(X_1, X_2)} (p + 1) \gamma_{p,p} \sum_{(m,p) = 1} \alpha_m \sum_{(n,p) = 1} \beta_n.
\end{equation}
If say $X_1 = X_2 = X$, then
\begin{equation}
\mathcal{S}'' \ll X^2 \Big|\sum_m \alpha_m \Big| \cdot \Big|\sum_n \beta_n \Big|,
\end{equation}
which is best-possible since the sum of $(p+1) \gamma_{p,p}$ may have the same sign as $\sum_m \alpha_m \sum_n \beta_n$.
A bound of this magnitude is included with $c = q = d_1 = 1$ in \eqref{eq:MbetaDef} and \eqref{eq:BilinearKloosterman}.  Cauchy's inequality applied to $\mathcal{S}''$ gives
\begin{equation}
 \mathcal{S}''
 \ll X^2 N \| \alpha \| \| \beta \|.
\end{equation}
This matches the bound in Corollary \ref{coro:Sbound} for $N$ large and $X_1 = X_2$.

Of course, in actual practice it is necessary to treat all possible values of $\gcd(D_1, D_2)$ that ``interpolate'' the two extremes $(D_1, D_2) = 1$, and $D_1 = D_2$, and indeed this is accomplished in the proof of Theorem \ref{thm:BilinearKloosterman}.  In fact, this is the main difficulty in the proof.

The above remarks indicate that the quality of Theorem \ref{thm:BilinearKloosterman} comes largely from terms where $(D_1, D_2)$ is large.  This might be surprising in light of the relative rarity of such terms.

\subsection{The $GL_2$ spectral large sieve}
The spectral large sieve for $SL_2(\mz) \backslash \mh$ was originally proved by Iwaniec \cite{IwaniecLargeSieve}, while the case of congruence subgroups was extensively developed by Deshouillers and Iwaniec \cite{DeshouillersIwaniec}.
Here we sketch a proof inspired by Jutila \cite[Section 3]{Jutila}, since
we shall use this method as a motivating guide for the more challenging $GL_3$ case.  Recall that the $GL_2$ spectral large sieve states
\begin{equation}
\sum_{T \leq t_j \leq T + \Delta} \frac{1}{R_j}  \Big| \sum_{n \leq N} a_n \lambda_j(n) \Big|^2 \ll (NT)^{\varepsilon} (\Delta T +N) \sum_n |a_n|^2,
\end{equation}
where $R_j$ is given by \eqref{eq:Rjdef} (but for $\phi_j$ a Hecke-Maass cusp form on $SL_2(\mz)$), and $1 \leq \Delta \leq T$.

The $GL_2$ Bruggeman-Kuznetsov formula gives
\begin{equation}
 \sum_{T \leq t_j \leq T + \Delta} \frac{1}{R_j} \Big| \sum_{N/2 < n \leq N} a_n \lambda_j(n) \Big|^2 \ll \Delta T \sum_n |a_n|^2 + 
 \mathcal{K},
\end{equation}
where
\begin{equation}
 \mathcal{K} = \sum_{N/2 < m,n \leq N} a_m \overline{a_n} \sum_{c=1}^{\infty} \frac{S(m,n;c)}{c} B\Big(\frac{ \sqrt{mn}}{c}\Big),
\end{equation}
and where $B(x)$ is a certain integral transform of a nonnegative weight function $h$ that is $\gg 1$ for $T \leq t \leq T + \Delta$.  For an appropriate smooth choice of $h$, $B(x)$ is very small unless $x \gg \Delta T^{1-\varepsilon}$.  Then by a Mellin transform, we have approximately that for $x \asymp X$, 
\begin{equation}
 B(x) \approx \Delta T \int_{|t| \ll X} X^{-1} x^{it} b(t) dt,
\end{equation}
where $b(t) \ll 1$.  Here $b$ depends on $X$, but not on $x$.
Applying this formula to $\mathcal{K}$, we derive
\begin{equation}
\mathcal{K} \lessapprox \sum_{C \text{ dyadic} } \frac{\Delta T}{CX} 
 \int_{|t| \ll X} b(2t)  \sum_{c \asymp C} c^{-2it} \sumstar_{a \shortmod{c}}
 \Big(\sum_{m \asymp N} a_m m^{it} e\Big(\frac{am }{c}\Big) \Big)
 \Big(\sum_{n \asymp N} \overline{a_n} n^{it} e\Big(\frac{\overline{a} n }{c}\Big) \Big)dt.
\end{equation}
The hybrid large sieve inequality of Gallagher \cite{Gallagher} states
\begin{equation}
 \int_{|t| \leq X}  \sum_{c \leq C} \thinspace \sumstar_{a \shortmod{c}} \Big| \sum_{n \leq N} a_n n^{-it} e\Big(\frac{an}{c}\Big) \Big|^2 dt \ll (C^2 X + N) \sum_n |a_n|^2.
\end{equation}
Applying this to $\mathcal{K}$ after a use of Cauchy-Schwarz, and using $X \asymp \frac{N}{C}$, and $C \ll \frac{N}{\Delta T} (NT)^{\varepsilon}$, we derive
\begin{equation}
\mathcal{K} \lessapprox \Delta T \sum_{C \text{ dyadic} } (CX)^{-1} (C^2 X + N) \sum_n |a_n|^2 \ll N (NT)^{\varepsilon} \sum_n |a_n|^2.
\end{equation}

The main observation is that the $GL_1$ hybrid large sieve inequality drives the final estimations, and only rather crude information is required on $B$, namely its truncation and size of its Mellin transform.  The hybrid aspect of the large sieve is able to recover the loss in separation of variables in $B$.  

For later use, we shall require a different version (though morally equivalent) of the hybrid large sieve than that given by Gallagher.  The following is a special case of \cite[Lemma 6.1]{Young}.
\begin{mylemma}
\label{lemma:variantsieve}
Let $b_m$ be  arbitrary complex numbers, and suppose $Y \gg 1$.  Then
\begin{equation}
\label{eq:9.2}
 \int_{1}^{2} \sum_{b \leq B} \thinspace \sumstar_{x \shortmod{b} } \Big| \sum_{N \leq m < N + M} b_m \e{xm}{b} e\Big(\frac{tm}{Y}\Big) \Big|^2 dt \ll (B^2  + Y) \sum_{N \leq m < N+ M} |b_m|^2.
\end{equation}
\end{mylemma}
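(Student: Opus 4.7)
My plan is to prove the inequality by applying Plancherel's theorem in $t$, after first replacing the indicator of $[1,2]$ by a Fourier-compactly-supported smooth majorant; this reduces the task to the standard additive large sieve for Farey fractions at each fixed output frequency.

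Concretely, I would fix once and for all a function $\phi \in L^2(\mr)$ satisfying $|\phi(t)|^2 \geq 1$ for all $t \in [1,2]$ and having Fourier transform $\widehat\phi$ supported in a fixed bounded interval. An explicit choice is the shifted sinc $\phi(t) = \tfrac{\pi}{2}\,\mathrm{sinc}(t-3/2)$, whose Fourier transform is supported in $[-\tfrac12,\tfrac12]$. Writing $S_{b,x}(t) = \sum_m b_m\, e(mx/b)\, e(mt/Y)$, positivity and Plancherel then give
\[
\int_1^2 |S_{b,x}(t)|^2\, dt \;\leq\; \int_\mr |\phi(t) S_{b,x}(t)|^2\, dt \;=\; \int_\mr \Big|\sum_m b_m\, e(mx/b)\, \widehat\phi(\xi - m/Y) \Big|^2 d\xi.
\]
Summing this identity over $b \leq B$ and primitive $x \shortmod{b}$ and swapping with the $\xi$-integral, I would apply the standard additive large sieve to the Farey fractions $\{x/b\}$ at each fixed $\xi$: these points are $1/B^2$-spaced modulo $1$, and because $\widehat\phi$ has bounded support, the coefficients $c_m(\xi) := b_m\,\widehat\phi(\xi - m/Y)$ vanish outside an interval of $m$-length $O(Y)$. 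The large sieve therefore gives
\[
\sum_{b \leq B}\,\sumstar_{x \shortmod{b}}\Big|\sum_m c_m(\xi)\, e(mx/b)\Big|^2 \;\ll\; (B^2 + Y) \sum_m |c_m(\xi)|^2.
\]
Integrating over $\xi$ and using $\int_\mr |\widehat\phi(\xi - m/Y)|^2\, d\xi = \|\widehat\phi\|_2^2 = \|\phi\|_2^2 \ll 1$ would then produce the claimed bound $(B^2 + Y)\sum_m |b_m|^2$.

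The hard part is really just the choice of majorant $\phi$. It is essential that $\widehat\phi$ be of compact support so that the effective length of the $m$-sum seen by the additive large sieve is $O(Y)$ rather than the full range $M$; a Gaussian or similar non-Fourier-truncated majorant would only yield $(B^2 + M)\sum_m|b_m|^2$, which is weaker when $M > Y$. The hypothesis $Y \gg 1$ ensures the effective length $O(Y)$ is at least comparable to $1$, so that the large sieve is applied in its meaningful regime.
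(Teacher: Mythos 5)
Your proof is correct. Note that the paper itself does not prove Lemma \ref{lemma:variantsieve}: it is simply quoted as a special case of Lemma 6.1 of \cite{Young}, so your argument supplies a genuinely self-contained derivation. The route you take — majorize the indicator of $[1,2]$ by $|\phi|^2$ with $\widehat{\phi}$ compactly supported, apply Plancherel in $t$, then invoke the additive large sieve for the $1/B^2$-spaced Farey fractions at each fixed output frequency $\xi$ — is the standard mechanism behind such hybrid inequalities, and every step checks out: $\tfrac{\pi}{2}\,\mathrm{sinc}(t-3/2)\geq 1$ on $[1,2]$ with transform supported in $[-\tfrac12,\tfrac12]$; the interchange of the $(b,x)$-sum with the $\xi$-integral is legitimate by nonnegativity; for fixed $\xi$ the coefficients $b_m\widehat{\phi}(\xi-m/Y)$ vanish unless $m$ lies in an interval of length $Y$, so the large sieve constant is $B^2+O(Y)$; and $\int_{\mr}|\widehat{\phi}(\xi-m/Y)|^2\,d\xi=\|\phi\|_2^2\ll 1$ returns $\sum_m|b_m|^2$, giving the claimed bound uniformly in $N$ and $M$. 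My only quibble is with the closing remark: a Gaussian majorant would not be doomed to $(B^2+M)$ — its rapid decay combined with splitting the $m$-range into blocks of length $Y$ also recovers $(B^2+Y)$, at the cost of extra bookkeeping — but compact Fourier support is certainly the cleanest choice, and your emphasis on why it yields effective length $O(Y)$ is exactly the right point.
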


\section{Preliminary arithmetical results}
For ease of reference, we collect here some results.
First we need an individual ``Weil-type" bound.  This estimate was proved by Stevens \cite{Stevens} but without explicit dependence on the $m_i$ and $n_i$, which was subsequently investigated by Buttcane \cite[Theorem 4]{Buttcane1}.
\begin{mylemma}
\label{lemma:KloostermanUpperBound}
For $m_1, m_2, n_1, n_2 \in \mz \setminus \{0 \}$, we have
\begin{equation}
S(m_1, m_2, n_1, n_2, D_1, D_2) \ll (D_1 D_2)^{1/2 + \varepsilon} ((D_1, D_2) (m_1 n_2, [D_1, D_2]) (m_2 n_1, [D_1, D_2]))^{1/2}.
\end{equation}
\end{mylemma}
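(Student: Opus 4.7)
The plan is to mirror Stevens' original approach and refine it in the manner of Buttcane, reducing the bound to a prime-power analysis via the twisted multiplicativity of $GL_3$ Kloosterman sums.

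First I would establish the twisted multiplicativity: if $D_1 = D_1' D_1''$ and $D_2 = D_2' D_2''$ with $(D_1' D_2', D_1'' D_2'') = 1$, then $S(m_1, m_2, n_1, n_2; D_1, D_2)$ factors as the product of two such sums with the arguments $m_i, n_i$ multiplied by appropriate inverses. This is a direct application of the Chinese Remainder Theorem to the defining congruence $D_1 C_2 + B_1 B_2 + C_1 D_2 \equiv 0 \pmod{D_1 D_2}$ together with the identities $Y_i B_i + Z_i C_i \equiv 1$ modulo $D_i$. It suffices therefore to prove the bound for $D_1 = p^{a_1}$, $D_2 = p^{a_2}$ at a single prime $p$, with the $m_i, n_i$ possibly replaced by units times $m_i, n_i$ (which does not affect the gcd factors).

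For $\min(a_1, a_2) = 0$, the factorization \eqref{eq:KloostermanFactorizationCoprimeModuli} expresses $S$ as a product of two classical $GL_2$ Kloosterman sums, and Weil's bound $|S(a, b; c)| \leq \tau(c)(a, b, c)^{1/2} c^{1/2}$ applied to each factor delivers the claim. The coprimality of the moduli ensures that $(D_2 m_1, n_1, D_1) = (m_1 n_1, D_1)$ and the analogous identity on the other factor, and $[D_1, D_2] = D_1 D_2$, so the two gcd factors $(m_1 n_2, [D_1, D_2])$ and $(m_2 n_1, [D_1, D_2])$ emerge correctly.

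The main obstacle is the case $\min(a_1, a_2) \geq 1$. Here, assuming $a_1 \leq a_2$, I would solve the congruence for $C_2$ modulo $D_1 D_2$, reducing the defining sum to a three-variable exponential sum over $B_1, B_2, C_1$. Stratifying by the $p$-adic valuations of these variables, the innermost character sums become Ramanujan or $GL_2$ Kloosterman sums, which I would evaluate or bound by Weil. The delicate point is keeping the analysis uniform in $a_1, a_2$ and showing that the resulting $p$-adic stationary-phase calculation produces exactly the factor $p^{\min(a_1, a_2)/2}$ coming from $(D_1, D_2)^{1/2}$ together with $(m_1 n_2, p^{a_2})^{1/2}$ and $(m_2 n_1, p^{a_2})^{1/2}$ from the behavior of the phase at its critical points. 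Assembling the prime-power estimates by multiplicativity then yields the global bound; the factor $(D_1, D_2)^{1/2}$ in the statement is precisely the loss incurred as one passes from coprime to non-coprime moduli at each prime.
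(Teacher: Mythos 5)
The paper does not prove this lemma at all: it is quoted as known, due to Stevens and, with the explicit dependence on the $m_i$ and $n_i$, to Buttcane \cite[Theorem 4]{Buttcane1}. So there is no in-paper argument to match; you are attempting to reprove a cited result, and your proposal should be judged as a standalone proof. The easy parts of your outline are fine: twisted multiplicativity (\cite[Property 4.7]{BFG}) does reduce the claim to prime-power moduli $D_1 = p^{a_1}$, $D_2 = p^{a_2}$ (with the $m_i, n_i$ twisted by units, which indeed does not affect the gcd factors), and when $\min(a_1,a_2)=0$ the factorization \eqref{eq:KloostermanFactorizationCoprimeModuli} plus Weil gives the bound, since $(D_2 m_1, n_1, D_1) = (m_1, n_1, D_1) \leq (m_1 n_2, D_1 D_2)$ and similarly for the other factor.

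The genuine gap is the case $\min(a_1,a_2) \geq 1$, which is exactly where all the difficulty of the lemma resides, and your treatment of it is a description of a strategy rather than an argument. Saying that one solves for $C_2$, stratifies $B_1, B_2, C_1$ by $p$-adic valuation, and that "the resulting $p$-adic stationary-phase calculation produces exactly the factor" $p^{\min(a_1,a_2)/2}$ together with $(m_1 n_2, p^{a_2})^{1/2} (m_2 n_1, p^{a_2})^{1/2}$ asserts the conclusion without deriving it. Nothing in your sketch exhibits why the gcd factors pair the indices crosswise ($m_1$ with $n_2$, $m_2$ with $n_1$) rather than in some other combination, nor how the estimate is made uniform in $a_1, a_2$; this uniform prime-power analysis is precisely the content of Stevens' and Buttcane's proofs (and of the stationary-phase machinery of D\polhk{a}browski--Fisher cited in the paper), and it occupies substantial technical work there. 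Note also that the paper's own computations in Section \ref{section:Rbound} (e.g.\ the evaluation \eqref{eq:ShatPrimePowerPrimePowerEvaluation} and the delicate case $k = l \geq 2$) illustrate how much case analysis such valuation stratifications require even for the simpler transformed sums $\widehat{S}$; one cannot wave at the analogous analysis for $S$ itself. As written, the proposal would need the full prime-power computation supplied (or an honest citation of \cite[Theorem 4]{Buttcane1}, which is what the paper does) before it constitutes a proof.
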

This estimate is not sharp for $(D_1, D_2) >1$, but it is difficult to extract clean results from the literature (see \cite[Theorem 3.7]{DabrowskiFisher}).  We may obtain some easy improvements by way of explicit computations in some important special cases:
\begin{mylemma}[\cite{BFG}]
\label{lemma:Sevaluation}
Suppose $l \geq 1$.  Then
\begin{equation}
S(m_1, m_2, n_1, n_2, p, p^l) = S(n_1, 0;p) S(m_2, n_2 p, p^l) + S(m_1, 0;p) S(n_2, m_2 p ; p^l) + \delta_{l=1} (p-1).
\end{equation}
\end{mylemma}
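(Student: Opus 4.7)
The plan is to evaluate the Kloosterman sum directly by exploiting the special structure of the moduli $(p, p^l)$. Since $D_1 D_2 = p^{l+1}$, reducing the Pl\"ucker compatibility condition $p C_2 + B_1 B_2 + C_1 p^l \equiv 0 \pmod{p^{l+1}}$ modulo $p$ forces $p \mid B_1 B_2$. This splits the sum into two disjoint cases: Case (A) with $B_1 \equiv 0 \pmod p$, and Case (B) with $p \nmid B_1$ and $p \mid B_2$. Each case will produce one of the two main terms of the lemma.

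In Case (A), the coprimality $(B_1, C_1, p) = 1$ forces $(C_1, p) = 1$, and solving the compatibility condition for $C_2$ yields $C_2 \equiv -C_1 p^{l-1} \pmod{p^l}$. For $l \geq 2$ this in turn forces $(B_2, p) = 1$, since $p \mid C_2$. With the convenient choices $Y_1 = 0$, $Z_1 = \overline{C_1}$, $Y_2 = \overline{B_2}$, $Z_2 = 0$, the summand simplifies so that the sum factors: summing over $C_1 \in (\mathbb{Z}/p\mathbb{Z})^*$ produces the Ramanujan sum $S(n_1, 0; p)$ (independent of $B_2$ because $(B_2, p) = 1$), while summing over $B_2 \in (\mathbb{Z}/p^l\mathbb{Z})^*$ produces $S(m_2, n_2 p; p^l)$, yielding the first term.

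In Case (B), I write $B_2 = p b_2$ with $(b_2, p) = 1$; now $C_1$ ranges freely modulo $p$ and $C_2$ is a unit mod $p^l$. The key technical step is to take $Z_2 = \overline{C_2}$ and expand $\overline{C_2}$ modulo $p^l$ via a geometric series: writing $C_2 = (-B_1 b_2)(1 + C_1 \overline{B_1 b_2}\, p^{l-1})$, for $l \geq 2$ the quadratic perturbation $O(p^{2(l-1)})$ vanishes mod $p^l$, so $\overline{C_2} \equiv -\overline{B_1 b_2} + C_1 (\overline{B_1 b_2})^2 p^{l-1} \pmod{p^l}$. This isolates the $C_1$-dependence into a single phase like $e(-n_2 C_1 \overline{B_1}\, \overline{b_2}^2/p)$, and summing over $C_1 \in \mathbb{Z}/p\mathbb{Z}$ forces $p \mid n_2$ (the same vanishing property holds for $S(n_2, m_2 p; p^l)$ itself, since $p$ divides the second argument). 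After this reduction the $B_1$-sum yields $S(m_1, 0; p)$, while the $b_2$-sum matches $p^{-1} S(n_2, m_2 p; p^l)$ via the identity $S(n_2, m_2 p; p^l) = p \cdot S(m_2, n_2/p; p^{l-1})$ for $p \mid n_2$ (itself proved by parametrizing $(\mathbb{Z}/p^l\mathbb{Z})^*$ over $(\mathbb{Z}/p^{l-1}\mathbb{Z})^*$).

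The extra $\delta_{l=1}(p-1)$ arises from the subcase $B_1 = B_2 = 0$, which lies in Case (A) and is permitted only when $l = 1$ (for $l \geq 2$ it violates $(B_2, C_2, p^l) = 1$). A direct computation shows that the summand is identically $1$ in this subcase, so summing over $C_1 \in (\mathbb{Z}/p\mathbb{Z})^*$ contributes exactly $p-1$. The hardest part will be the Taylor expansion of $\overline{C_2}$ in Case (B) together with the careful bookkeeping of coprimality constraints that governs which residue variables remain free in each case.
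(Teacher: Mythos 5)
Your evaluation is essentially correct, and note that the paper does not actually prove this lemma: it is quoted from Bump--Friedberg--Goldfeld \cite{BFG} (the paper invokes it alongside Property 4.10 of that reference), so your direct computation supplies a self-contained argument. Your route --- reducing the compatibility condition mod $p$ to force $p\mid B_1B_2$, splitting into $p\mid B_1$ versus $p\nmid B_1,\ p\mid B_2$, choosing the $Y_i,Z_i$ conveniently in each case, and expanding $\overline{C_2}$ to first order in $p^{l-1}$ --- is exactly the style of computation the paper itself performs for $\widehat{S}(a,u,t,b,p^k,p^l)$ in Section \ref{section:Rbound}, so it is faithful to the source even though the lemma's proof is outsourced there. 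I checked the details: in Case (A) the $C_1$-sum gives $S(n_1,0;p)$ and the $B_2$-sum gives $S(m_2,n_2p;p^l)$; in Case (B) (for $l\geq 2$) the factor $p$ produced by the $C_1$-sum, the restriction $p\mid n_2$, and the identity $S(n_2,m_2p;p^l)=p\,S(m_2,n_2/p;p^{l-1})$ fit together as you claim; and the identification of $\delta_{l=1}(p-1)$ with the subcase $B_1=B_2=0$, $C_1\in(\mathbb{Z}/p\mathbb{Z})^*$ is correct.

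One small point to patch: your Case (B) as written applies only to $l\geq 2$. When $l=1$, the condition $p\mid B_2$ forces $B_2=0$, so there is no unit $b_2$; the congruence becomes $C_2\equiv -C_1\pmod p$, which forces $C_1$ to be a unit (so $C_1$ does not range freely); and the identity $S(n_2,m_2p;p^l)=p\,S(m_2,n_2/p;p^{l-1})$ fails at $l=1$ (for $p\mid n_2$ the left side is $S(n_2,0;p)=p-1$, not $p$). The repair is immediate: with $B_1$ a unit, $B_2=0$, $C_2\equiv-C_1$, the choices $Y_1=\overline{B_1}$, $Z_1=0$, $Y_2=0$, $Z_2=\overline{C_2}$ give the summand $e(m_1B_1/p)\,e(n_2\overline{C_1}B_1/p)$, and summing over $C_1$ and then $B_1$ yields $S(m_1,0;p)\,S(n_2,0;p)=S(m_1,0;p)\,S(n_2,m_2p;p)$, which is the second main term at $l=1$. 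With that sentence added, your argument is complete for all $l\geq 1$.
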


\begin{mylemma}
\label{lemma:KloostermanVanishes}
Suppose $b \geq 1$, $c \geq 2$, and $
(\alpha \beta , p) = 1$.  Then
\begin{equation}
\label{eq:KloostermanVanishes}
S(\alpha,  \beta p^b, p^c) = 0.
\end{equation}
\end{mylemma}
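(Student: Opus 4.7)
The plan is to establish the vanishing via a single change-of-variable trick that exploits both hypotheses $c \geq 2$ and $b \geq 1$. Specifically, I will shift the summation variable $x$ by a small multiple of $p^{c-1}$, which perturbs the linear term but leaves the ``twisted'' term invariant, and then average.

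Concretely, write the $GL_2$ Kloosterman sum as
\begin{equation*}
S(\alpha, \beta p^b; p^c) = \sum_{\substack{x \shortmod{p^c} \\ (x,p)=1}} e\Big(\frac{\alpha x + \beta p^b \bar x}{p^c}\Big),
\end{equation*}
where $\bar x$ denotes the inverse of $x$ modulo $p^c$. For each $u \in \mz/p\mz$, the map $x \mapsto x(1+up^{c-1})$ is a bijection on $(\mz/p^c\mz)^*$. I will check two congruences, both of which use the hypotheses in the lemma:
\begin{equation*}
\overline{x(1+up^{c-1})} \equiv \bar x (1-up^{c-1}) \shortmod{p^c}, \qquad \beta p^b \cdot \bar x(1-up^{c-1}) \equiv \beta p^b \bar x \shortmod{p^c}.
\end{equation*}
The first is valid because $(up^{c-1})^2 \equiv 0 \shortmod{p^c}$ when $c \geq 2$; the second holds because $\beta p^{b+c-1} u \bar x \equiv 0 \shortmod{p^c}$ when $b \geq 1$. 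Hence after the substitution, the exponential is multiplied by exactly $e(\alpha x u/p)$.

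Since the value of $S$ is unchanged by this bijection of summation variables, we obtain the identity
\begin{equation*}
p \cdot S(\alpha, \beta p^b; p^c) = \sum_{\substack{x \shortmod{p^c} \\ (x,p)=1}} e\Big(\frac{\alpha x + \beta p^b \bar x}{p^c}\Big) \sum_{u \shortmod{p}} e\Big(\frac{\alpha x u}{p}\Big).
\end{equation*}
The inner $u$-sum equals $p$ if $p \mid \alpha x$ and vanishes otherwise; because $(\alpha, p) = (x,p) = 1$, it vanishes for every $x$ in the range of summation. This yields $S(\alpha, \beta p^b; p^c) = 0$ as desired.

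The only potential obstacle is keeping careful track of modular arithmetic in the two congruences above, and in particular noting that the two inequalities $c \geq 2$ and $b \geq 1$ are precisely what is needed to make the substitution preserve the $\beta p^b \bar x$ term while perturbing the $\alpha x$ term. No deeper input is required.
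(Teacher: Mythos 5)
Your proof is correct and rests on the same mechanism as the paper's: multiply $x$ by a unit congruent to $1$ modulo a high power of $p$, note that the $\beta p^b \bar x$ term is unchanged modulo $p^c$ (using $b\geq 1$ and $c\geq 2$) while the linear term picks up a nontrivial additive character, and let the resulting character sum force vanishing. The only difference is cosmetic: you average over a shift $u \bmod p$ with exponent $c-1$, which handles all $b\geq 1$ uniformly, whereas the paper decomposes $x = x_1(1+p^{c-b}x_2)$ and so treats $b \geq c$ by a separate (trivial) case.
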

\begin{proof}
If $b \geq c$, then $S(\alpha, \beta p^b, p^c) = S(1,
0, p^c) = 0$ since $c \geq 2$,  so suppose $c \geq b+1$.
Opening the Kloosterman sum as a sum over $x
\pmod{p^c}$, we change variables $x = x_1 (1 + p^{c-b} x_2)$, where $x_1$ runs
modulo $p^{c-b}$ (coprime to $p$) and $x_2$ runs modulo $p^{b}$.  Then
$\overline{x} \equiv \overline{x_1} \pmod{p^{c-b}}$, and so
\begin{equation}
 S(\alpha, \beta p^b, p^c) = \sumstar_{x_1 \shortmod{p^{c-b}}} \sum_{x_2
\shortmod{p^b}} e\Big(\frac{\alpha x_1 (1 + p^{c-b} x_2) + \beta p^b
\overline{x_1}}{p^c} \Big).
\end{equation}
The sum over $x_2$ then vanishes since $b \geq 1$ and $(\alpha x_1, p) = 1$.
\end{proof}

Consider $S(a,y,x,b,D_1, D_2)$ with $(a,D_1) = (b, D_2) = 1$.  Then define its 
(partial, middle two-variable) 
Fourier transform by
\begin{equation}
 \widehat{S}(a, u, t, b, D_1, D_2) = \frac{1}{D_1 D_2} \sum_{x \shortmod{D_1}}
 \sum_{y \shortmod{D_2}}
S(a, y, x, b, D_1, D_2) e\Big(\frac{-x t}{D_1}\Big) e\Big(\frac{-y u}{D_2}\Big),
\end{equation}
so that the Fourier inversion formula reads
\begin{equation}
 S(a,m, n, b, D_1, D_2) =
 \sum_{t \shortmod{D_1}} 
  \sum_{u \shortmod{D_2}}
 e\Big(\frac{tn}{D_1} + \frac{u m}{D_2}
\Big) \widehat{S}(a, u, t, b, D_1, D_2).
\end{equation}
Define
\begin{equation}
\label{eq:Rdefinition}
\mathcal{R}(t, D_1, D_2) = \max_{(ab, D_1) = 1} \sum_{u \shortmod{D_2}} |\widehat{S}(a,u,bt, 1, D_1, D_2)|. 
\end{equation}
Remark.  Using elementary properties of the Kloosterman sums, we may alternatively use the definition
\begin{equation}
\label{eq:RdefinitionAlternate}
\mathcal{R}(t, D_1, D_2) = \max_{(b, D_1) = 1} \sum_{u \shortmod{D_2}} |\widehat{S}(1,u,bt, 1, D_1, D_2)|. 
\end{equation}
This follows by using that $S(a,y,x,1, D_1, D_2) = S(1, y, a x, 1, D_1, D_2)$ (see \cite[Property 4.3]{BFG}), so that after a change of variables we derive 
\begin{equation}
\label{eq:ShatFormulaMovingaToOtherSide}
 \widehat{S}(a,u,bt,1, D_1, D_2) = \widehat{S}(1,u, \overline{a} b t, 1, D_1, D_2).
\end{equation}

The presence of the maximum in \eqref{eq:Rdefinition} is to facilitate the use of the Chinese Remainder Theorem which leads to a more pleasant multiplicative structure for $\mathcal{R}$:
\begin{mylemma}
 The function $\mathcal{R}(t, D_1, D_2)$ is jointly multiplicative in $t, D_1, D_2$.
\end{mylemma}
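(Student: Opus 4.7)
The plan is to invoke the twisted multiplicativity of the long-element $GL_3$ Kloosterman sum established by Bump--Friedberg--Goldfeld: whenever $D_1 = D_1' D_1''$ and $D_2 = D_2' D_2''$ with $(D_1' D_2', D_1'' D_2'') = 1$, the Kloosterman sum $S(a, y, x, 1, D_1, D_2)$ factors as a product of Kloosterman sums of moduli $(D_1', D_2')$ and $(D_1'', D_2'')$, with each factor's entries being the original arguments twisted by explicit units coming from the Chinese Remainder Theorem. Correspondingly, the additive characters $e(-xt/D_1)$ and $e(-yu/D_2)$ split, via CRT on $x \bmod D_1$ and $y \bmod D_2$, into products of additive characters with moduli $D_1', D_1''$ and $D_2', D_2''$, respectively. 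Combining these two factorizations term by term, one obtains
\begin{equation}
\widehat{S}(a, u, bt, 1, D_1, D_2) = \widehat{S}(a', u', b' t', 1, D_1', D_2') \cdot \widehat{S}(a'', u'', b'' t'', 1, D_1'', D_2''),
\end{equation}
where the primed and double-primed variables are the CRT components of the corresponding unprimed variables, multiplied by appropriate units depending only on the factorization data.

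Next, I would sum $|\widehat{S}|$ over $u \bmod D_2$. Since CRT gives a bijection between $u \bmod D_2$ and pairs $(u', u'') \in (\mz/D_2'\mz) \times (\mz/D_2''\mz)$, and the absolute value splits multiplicatively, one gets
\begin{equation}
\sum_{u \shortmod{D_2}} |\widehat{S}(a,u,bt,1,D_1,D_2)| = \Bigl(\sum_{u' \shortmod{D_2'}} |\widehat{S}(a', u', b't', 1, D_1', D_2')|\Bigr) \Bigl( \sum_{u'' \shortmod{D_2''}} |\widehat{S}(a'', u'', b''t'', 1, D_1'', D_2'')| \Bigr).
\end{equation}

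Finally, I take the maximum over $(ab, D_1) = 1$. By CRT this condition is equivalent to $(a' b', D_1') = (a'' b'', D_1'') = 1$; moreover, maximizing in $(a,b)$ is the same as maximizing independently in $(a', b')$ and $(a'', b'')$, since the unit twists introduced by the CRT factorization are invertible and simply permute the allowed pairs. Thus the maximum of the product equals the product of the maxima, yielding $\mathcal{R}(t, D_1, D_2) = \mathcal{R}(t', D_1', D_2') \mathcal{R}(t'', D_1'', D_2'')$, which is the claimed joint multiplicativity.

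The main obstacle will be bookkeeping: writing out the BFG factorization explicitly enough to match each unit twist on the Kloosterman-sum side with the correct CRT twist on the character side, and then checking that the unit twists introduced into $a, b$ by the factorization do not obstruct the splitting of the max. The remark following \eqref{eq:RdefinitionAlternate}, which lets one absorb the $a$ twist into a redefinition of the $bt$ argument (equivalently, set $a = 1$ throughout), should considerably streamline this step; after that reduction, only the twist on $bt$ needs to be tracked, and this twist is a fixed unit modulo $D_1$ that can be harmlessly absorbed into the ``free'' variable $b$ over which the maximum is taken.
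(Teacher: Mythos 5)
Your proposal follows essentially the same route as the paper: the paper likewise uses the Chinese Remainder Theorem together with the twisted multiplicativity of the $GL_3$ Kloosterman sum (Property 4.15 of Bump--Friedberg--Goldfeld) to derive a factorization $\widehat{S}(a,u,t,1,C_1E_1,C_2E_2)=\widehat{S}(\overline{E_1}^2E_2a,uE_2\overline{E_1},t\overline{E_1},1,C_1,C_2)\,\widehat{S}(\overline{C_1}^2C_2a,uC_2\overline{C_1},t\overline{C_1},1,E_1,E_2)$, then splits the $u$-sum and the maximum over $(a,b)$ and absorbs the unit twists by changes of variables, exactly as you outline. The bookkeeping you anticipate is indeed the only remaining work, and your observation that the twists are units which merely permute the admissible $(a,b)$ pairs is the same mechanism the paper uses to split the maximum.
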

 \begin{proof}
Say $D_1 = C_1 E_1$ and $D_2 = C_2 E_2$ with $(C_1 C_2, E_1 E_2) = 1$.  Also
write $x = x_C E_1 \overline{E_1} + x_E C_1 \overline{C_1}$, and similarly $y =
y_C E_1 \overline{E_1} + y_E C_1 \overline{C_1}$, where $x_C, y_C, x_E, y_E$ run modulo $C_1, C_2, E_1, E_2$, respectively.
Then using \cite[Property 4.15]{BFG}, we
have
\begin{multline}
 \widehat{S}(a, u, t, 1, D_1, D_2) =  \sum_{x_C, y_C } S(\overline{E_1}^2 E_2 a, \overline{E_2}^2 E_1 y_C, x_C, 1, C_1, C_2)
 e\Big(\frac{-x_C \overline{E_1} t}{C_1}\Big) e\Big(\frac{- y_C \overline{E_2} u}{C_2}\Big)
 \\
\frac{1}{D_1 D_2} \sum_{x_E,
y_E}
 S(\overline{C_1}^2 C_2 a, \overline{C_2}^2 C_1 y_E, x_E, 1, E_1, E_2)
 e\Big(\frac{-x_E \overline{C_1} t}{E_1}\Big) e\Big(\frac{- y_E \overline{C_2} u}{E_2}\Big).
\end{multline}
Changing variables $y_C \rightarrow E_2^2 \overline{E_1} y_C$, $y_E \rightarrow
C_2^2 \overline{C_1} y_E$, we derive
\begin{multline}
 \widehat{S}(a, u, t, 1, D_1, D_2) = \frac{1}{C_1 C_2} \sum_{x_C, y_C} S( \overline{E_1}^2 E_2 a,
y_C, x_C, 1, C_1, C_2)
 e\Big(\frac{-x_C \overline{E_1} t}{C_1}\Big) e\Big(\frac{- y_C E_2 \overline{E_1} u}{C_2}\Big)
 \\
 \frac{1}{E_1 E_2} \sum_{x_E, y_E}
 S(\overline{C_1}^2 C_2 a, y_E, x_E, 1, E_1, E_2)
 e\Big(\frac{-x_E \overline{C_1} t}{E_1}\Big) e\Big(\frac{- y_E C_2 \overline{C_1} u}{E_2}\Big).
\end{multline}
Therefore, 
\begin{equation}
\label{eq:ShatFactorizationFormula}
 \widehat{S}(a, u, t, 1, C_1 E_1, C_2 E_2) = \widehat{S}(\overline{E_1}^2 E_2 a, u E_2 \overline{E_1}, t \overline{E_1}, 1, C_1, C_2)
\widehat{S}(\overline{C_1}^2 C_2 a, u C_2 \overline{C_1}, t \overline{C_1}, 1, E_1, E_2).
 \end{equation}

Using \eqref{eq:ShatFactorizationFormula},
we
 derive that
\begin{multline}
\label{eq:RfactorizationFormulaInProof}
\mathcal{R}(t, C_1 E_1, C_2 E_2) = 
 \max_{(ab, C_1 E_1) = 1} \sum_{u \shortmod{C_2 E_2}} 
 |\widehat{S}(\overline{E_1}^2 E_2 a, u E_2 \overline{E_1}, bt \overline{E_1}, 1, C_1, C_2)|
\\
|\widehat{S}(\overline{C_1}^2 C_2 a, u C_2 \overline{C_1}, bt \overline{C_1}, 1, E_1, E_2)|.
\end{multline}
In the right hand side of \eqref{eq:RfactorizationFormulaInProof}, the first line only depends on $u$ modulo $C_2$, and $a,b$ modulo $C_1$, while the second line only depends on $u$ modulo $E_2$, and $a,b$ modulo $E_1$.  Therefore, we have
\begin{equation}
\begin{split}
\mathcal{R}(t, C_1 E_1, C_2 E_2) = 
 \max_{(a_C b_C, C_1) = 1} \sum_{u_C \shortmod{C_2}} 
 |\widehat{S}(\overline{E_1}^2 E_2 a_C, u_C E_2 \overline{E_1}, b_C t \overline{E_1}, 1, C_1, C_2)|
\\
 \max_{(a_E b_E, E_1) = 1} \sum_{u_E \shortmod{E_2}} 
|\widehat{S}(\overline{C_1}^2 C_2 a_E, u_E C_2 \overline{C_1}, b_E t \overline{C_1}, 1, E_1, E_2)|.
\end{split}
\end{equation}
Changing variables $a_C \rightarrow E_1^2 \overline{E_2} a_C$, $u_C \rightarrow E_1 \overline{E_2} u_C $, $b_C \rightarrow E_1 b_C$, and similarly for $a_E$, $u_E$, and $b_E$, we derive $\mathcal{R}(t, C_1 E_1, C_2 E_2) = \mathcal{R}(t, C_1, C_2) \mathcal{R}(t, E_1, E_2)$, as desired.
%
\end{proof}
\begin{mylemma}
\label{lemma:R'property}
 Let 
 \begin{equation}
\mathcal{R}'(u, D_1, D_2) = \max_{\substack{(a, D_1) = 1 \\ (b, D_2) = 1}} \sum_{t \shortmod{D_1}} |\widehat{S}(a,bu,t, 1, D_1, D_2)|. 
\end{equation}
Then
\begin{equation}
 \mathcal{R'}(u, D_1, D_2) = \mathcal{R}(u, D_2, D_1).
\end{equation}
\end{mylemma}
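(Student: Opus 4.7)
The plan is to reduce $\mathcal{R}'(u, D_1, D_2)$ to the alternate form \eqref{eq:RdefinitionAlternate} of $\mathcal{R}$ evaluated at $(u, D_2, D_1)$, by invoking the symmetries of the $GL_3$ Kloosterman sum that swap the roles of the two moduli.  The first, preparatory step is to eliminate the maximum over $(a, D_1) = 1$ in $\mathcal{R}'$, in direct analogy with the derivation of \eqref{eq:RdefinitionAlternate}: using the twist $S(a, y, x, 1, D_1, D_2) = S(1, y, a x, 1, D_1, D_2)$ of \cite[Property 4.3]{BFG} and the change of variable $x \mapsto \overline{a} x$ inside the Fourier transform gives $\widehat{S}(a, bu, t, 1, D_1, D_2) = \widehat{S}(1, bu, \overline{a}\thinspace t, 1, D_1, D_2)$, and the bijection $t \mapsto a t$ on $\mz/D_1 \mz$ then shows
\[
 \mathcal{R}'(u, D_1, D_2) = \max_{(b, D_2) = 1} \sum_{t \shortmod{D_1}} \bigl|\widehat{S}(1, bu, t, 1, D_1, D_2)\bigr|.
\]

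The crux is then the identity $\widehat{S}(1, u, t, 1, D_1, D_2) = \widehat{S}(1, t, u, 1, D_2, D_1)$.  Starting from the definition of the left-hand side as a double Fourier transform of $S(1, y, x, 1, D_1, D_2)$ over $(x, y) \in \mz/D_1 \times \mz/D_2$, I would apply two Kloosterman symmetries in succession: first the ``dual'' symmetry $S(m_1, m_2, n_1, n_2, D_1, D_2) = S(n_1, n_2, m_1, m_2, D_1, D_2)$ (which reflects the contragredient involution $\phi \leftrightarrow \widetilde{\phi}$ on $GL_3$ Maass forms, equivalently $\lambda(m,n) \leftrightarrow \lambda(n,m)$), and then the moduli-swap $S(m_1, m_2, n_1, n_2, D_1, D_2) = S(m_2, m_1, n_2, n_1, D_2, D_1)$ (the same symmetry already used implicitly in the preceding multiplicativity lemma).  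The composition produces $S(1, y, x, 1, D_1, D_2) = S(1, x, y, 1, D_2, D_1)$, and after substitution and relabeling of the summation variables the resulting double sum is recognized as $\widehat{S}(1, t, u, 1, D_2, D_1)$.  Combining with the preparatory step and then matching against \eqref{eq:RdefinitionAlternate} applied to the triple $(u, D_2, D_1)$ immediately yields $\mathcal{R}'(u, D_1, D_2) = \mathcal{R}(u, D_2, D_1)$.

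The main technical obstacle is the dual symmetry used in the key identity, which is not cited elsewhere in the excerpt; it is a standard feature of the long-element $GL_3$ Kloosterman sums and, if needed, can be verified directly from the definition by a change of variables among the $B_i, C_i, Y_i, Z_i$ which interchanges the ``$m$'' and ``$n$'' sides of the exponent while preserving the constraint $D_1 C_2 + B_1 B_2 + C_1 D_2 \equiv 0 \pmod{D_1 D_2}$.  Modulo this one property, the argument is routine Fourier bookkeeping completely parallel to the passage from \eqref{eq:Rdefinition} to \eqref{eq:RdefinitionAlternate}.
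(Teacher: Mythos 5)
Your argument is correct and follows essentially the same route as the paper: the two symmetries you invoke are exactly Properties 4.5 (the dual swap of $(m_1,m_2)$ with $(n_1,n_2)$) and 4.4 (the moduli swap) of \cite{BFG} that the paper cites, combined with Property 4.3 and the same Fourier relabeling to land on \eqref{eq:RdefinitionAlternate} at $(u,D_2,D_1)$. The only cosmetic difference is that you normalize $a=1$ first via \eqref{eq:ShatFormulaMovingaToOtherSide} and the bijection $t \mapsto at$, whereas the paper carries $\overline{a}$ through the transform and absorbs it at the end.
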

\begin{proof}
 Using $S(a,y,x,1,D_1, D_2) = S(x,1,a,y,D_1, D_2) = S(1,x,y,a, D_2, D_1)$ (see Properties 4.5 and 4.4 of \cite{BFG}), along with $S(1,x,y,a, D_2, D_1) = S(1, a x, y, 1, D_2, D_1)$ (\cite[Property 4.3]{BFG}), we derive that
 \begin{multline}
  \widehat{S}(a,bu,t,1,D_1, D_2) = \frac{1}{D_1 D_2} \sum_{x \shortmod{D_1}}
 \sum_{y \shortmod{D_2}}
S(1,  x, y, 1, D_2, D_1) e\Big(\frac{-x \overline{a} t}{D_1}\Big) e\Big(\frac{-y bu}{D_2}\Big)
\\
= \widehat{S}(1, \overline{a}t, bu, 1, D_2, D_1).
 \end{multline}
From this, and using \eqref{eq:RdefinitionAlternate}, we complete the proof.
\end{proof}

\begin{mylemma}
\label{lemma:ShatReverseModuli}
 We have
 \begin{equation}
  \widehat{S}(a,u,t,b,D_1, D_2) = \widehat{S}(b,t,u,a, D_2, D_1).
 \end{equation}
\end{mylemma}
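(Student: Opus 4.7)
The plan is to reduce the identity to a symmetry of the underlying $GL_3$ Kloosterman sum itself, namely
\[
S(a,y,x,b,D_1,D_2) = S(b,x,y,a,D_2,D_1),
\]
and then propagate this symmetry through the Fourier transform by a routine relabeling of summation variables.

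To establish the Kloosterman-sum identity, I would compose two of the BFG symmetries exactly as at the start of the proof of Lemma~\ref{lemma:R'property}. Applying Property 4.5 of \cite{BFG} (which interchanges the $(m_1,m_2)$-pair with the $(n_1,n_2)$-pair) converts $S(a,y,x,b,D_1,D_2)$ into $S(x,b,a,y,D_1,D_2)$; then Property 4.4 (which reverses the order within each pair and simultaneously swaps the two moduli) yields $S(b,x,y,a,D_2,D_1)$, as desired.

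With this in hand, I would substitute the identity into the definition of $\widehat{S}(a,u,t,b,D_1,D_2)$ and rename the summation variables $x \leftrightarrow y$, so that after the swap the new $y$ runs modulo $D_1$ and the new $x$ runs modulo $D_2$. The exponential factors become $e(-yt/D_1)\, e(-xu/D_2)$, which is precisely the pattern required in the definition of $\widehat{S}$ when the first modulus is $D_2$ and the second is $D_1$: the second argument $t$ pairs with the $D_1$-variable and the third argument $u$ pairs with the $D_2$-variable. Comparison with the definition of $\widehat{S}(b,t,u,a,D_2,D_1)$ then finishes the proof.

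No serious obstacle is expected, since the argument is essentially bookkeeping once the Kloosterman-sum symmetry is identified. The only delicate point is tracking that the convention of pairing each summation variable with its exponential partner and its corresponding modulus remains consistent after the permutation $(D_1,D_2) \mapsto (D_2,D_1)$. This is automatic because in the definition of $\widehat{S}$ the variable summed modulo the first listed modulus is always the one appearing in the exponential with the third listed argument, and analogously for the second modulus with the second listed argument, so the swap $x \leftrightarrow y$ together with the reversal of the two moduli realigns everything correctly.
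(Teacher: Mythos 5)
Your proposal is correct and follows essentially the paper's own route: the paper proves this lemma as ``a minor variation of the proof of Lemma \ref{lemma:R'property}'', i.e., by composing Properties 4.5 and 4.4 of \cite{BFG} to obtain $S(a,y,x,b,D_1,D_2)=S(b,x,y,a,D_2,D_1)$ and then relabeling the summation variables in the definition of $\widehat{S}$, which is exactly what you do. Your bookkeeping of which variable pairs with which modulus and exponential argument is accurate, so nothing further is needed.
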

\begin{proof}
 A minor variation of the proof of Lemma \ref{lemma:R'property} gives the result. 
\end{proof}

\begin{mydefi}[Definition of $\nu$]
 Suppose $p$ is a prime.  If $n \in \mz$, we define $\nu_p(n)$ to be the standard $p$-adic valuation of $n$. 
 If $k \geq 1$ and $t \in \mz/p^k \mz$ we define $\nu_p(t)$ to be the largest $j \leq k$ such that $t \equiv 0 \pmod{p^j}$.  
\end{mydefi}
Remark.  One may easily check that $\nu_p(t)$ is well-defined for $t \in \mz/p^k \mz$; without the restriction $j \leq k$, two coset representatives may have different $p$-adic valuations.

\begin{mylemma}
\label{lemma:Rbound}
Suppose $(ab,p) = 1$, and set $\nu = \nu_p(t)$.  Then
\begin{equation}
 \mathcal{R}(t, p^k, p^l) 
 \leq (k+1)p^{l} + p^{\nu + l}  \delta(\nu \leq \tfrac23 \min(k,l) ).
\end{equation}
\end{mylemma}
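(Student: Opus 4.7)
The plan is to substitute the BFG definition of the Kloosterman sum into the Fourier transform $\widehat{S}(a,u,bt,1,p^k,p^l)$, carry out the inner $x$- and $y$-summations via orthogonality, and bound the resulting exponential sum by a case analysis on the $p$-adic valuation $\mu := \nu_p(u)$. The $x$-sum $\sum_{x \bmod p^k} e(x(Y_1 p^l - Z_1 u - t)/p^k)$ collapses to $p^k \cdot \mathbf{1}[Y_1 p^l - Z_1 u \equiv t \bmod p^k]$, while the $y$-sum forces $B_2 \equiv u \bmod p^l$, yielding
\begin{equation*}
\widehat{S}(a, u, bt, 1, p^k, p^l) = \sum_{(B_1, C_1, C_2)} e\!\left(\frac{aB_1}{p^k} + \frac{Y_2 p^k - Z_2 B_1}{p^l}\right),
\end{equation*}
the sum restricted by the coprimality conditions $(B_1,C_1,p) = (u,C_2,p) = 1$, the linking congruence $p^k C_2 + B_1 u + C_1 p^l \equiv 0 \pmod{p^{k+l}}$ (which determines $C_2$ uniquely given $(B_1, C_1)$ and a compatibility between $B_1$ and $u$), and the auxiliary condition $Y_1 p^l - Z_1 u \equiv bt \pmod{p^k}$, where $(Y_1, Z_1)$ satisfies $Y_1 B_1 + Z_1 C_1 \equiv 1 \pmod{p^k}$. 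Since $(b, p) = 1$, I may replace $bt$ by $t$ throughout.

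Next, I would decompose the sum by $\mu = \nu_p(u)$, writing $\sum_{u \bmod p^l} |\widehat{S}| = \sum_{\mu=0}^{l} \sum_{\nu_p(u) = \mu} |\widehat{S}|$. For each stratum, combining the auxiliary condition $Y_1 p^l - Z_1 u \equiv t$ with the inversion $Y_1 B_1 + Z_1 C_1 \equiv 1$ gives a congruence constraint on $(B_1, C_1)$ whose solvability and solution count depend on the relative sizes of $\mu$, $\nu := \nu_p(t)$, $k$, and $l$. In the generic regime the count is $\leq p^\mu$, so $|\widehat{S}| \leq p^\mu$; combined with $\#\{u \bmod p^l : \nu_p(u) = \mu\} \leq p^{l-\mu}$, each stratum contributes $\leq p^l$, and summing over $\mu = 0, 1, \ldots, k$ (beyond which the congruences admit no solution in the regime $l \geq k$, as $B_1$ is forced to vanish modulo $p^k$) yields the main term $(k+1) p^l$. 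An exceptional contribution $p^{\nu + l}$ arises when $\nu$ is small enough that a valuation alignment between $Z_1$, $u$, and $t$ produces enlarged solution sets; the threshold $\nu \leq \tfrac{2}{3} \min(k, l)$ corresponds to the precise range where these enlarged sets are nonempty. The opposite regime $l < k$ is reduced to $l \geq k$ by Lemma \ref{lemma:R'property}, via $\mathcal{R}(t, p^k, p^l) = \mathcal{R}'(t, p^l, p^k)$.

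The main obstacle will be the case analysis: the interplay among $\mu$, $\nu$, and the valuations of the auxiliary variables $B_1, C_1, Y_1, Z_1$ produces many sub-cases that must be handled with care. A subtle point is that the condition $Y_1 p^l - Z_1 u \equiv t$ may or may not be invariant under changes of the convention choosing $(Y_1, Z_1)$ from the inversion orbit, depending on $\nu_p(B_1)$; enforcing the correct counting requires careful tracking of the orbit structure together with the linking congruence $B_1 u + C_1 p^l \equiv 0 \bmod p^k$ inherited from the $(B_i,C_i)$ conditions. Pinning down the precise $\tfrac{2}{3}$-threshold stems from balancing the freedom in $u$ (of size $p^\mu$) against the rigidity forced by the $t$-congruence (of size $p^{\nu - \mu}$ or its analogue), and showing this balance is satisfiable only in the claimed range.
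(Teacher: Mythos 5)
Your opening reduction is sound and matches the paper's: substituting the BFG definition and collapsing the $x$- and $y$-sums does yield exactly the constrained sum over $(B_1,C_1,C_2)$ in \eqref{eq:ShatFormulaGeneral}, and reducing $k>l$ to $k<l$ via Lemma \ref{lemma:R'property} (or, pointwise, Lemma \ref{lemma:ShatReverseModuli}) is legitimate. However, everything after that is asserted rather than proved, and the assertions are where the entire content of the lemma lies. In particular, the claim that ``in the generic regime the count is $\leq p^{\mu}$, so $|\widehat{S}|\leq p^{\mu}$'' is not correct on the strata that matter: for $l>k\geq 2$ the explicit evaluation \eqref{eq:ShatPrimePowerPrimePowerEvaluation} shows $\widehat{S}$ is supported exactly on $p^{\nu}\,\|\,u$ with $\nu=\nu_p(t)\leq k/2$, where it equals $p^{\nu}$ times a Kloosterman sum modulo $p^{\nu}$ and hence can be as large as $p^{2\nu}$, far exceeding $p^{\mu}=p^{\nu}$. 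The correct bound per stratum comes not from a naive solution count but from proving that most strata vanish identically ($V_0=0$, $V_{k_1}=0$ for $1\leq k_1<k_2$, the exact support condition on $u$), and none of this is in your proposal.

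The description of the exceptional term is also off the mark. The threshold $\nu\leq\tfrac23\min(k,l)$ is not ``the precise range where enlarged solution sets are nonempty'' in your sense: for $k\neq l$ the true constraint is the stronger $\nu\leq\tfrac12\min(k,l)$, and the $\tfrac23$ arises only in the balanced case $k=l\geq 2$, from the combination of the restriction $\nu\leq\min(k-k_2,2k_2)\leq 2k/3$ over the strata $\nu\leq k_1<k$ with the congruence $u''a\equiv t''b\pmod{p^{\nu-k_2}}$, which is what thins the $u$-sum enough to give $p^{k+\nu}$. Similarly, the main term $(k+1)p^{l}$ is not produced by a uniform count over $\mu=0,\dots,k$; it comes from the $k=l$ case, where the strata with $k_2\leq\nu/2$ contribute $\nu p^{k}$, the stratum $B_1\equiv 0$ contributes $\phi(p^{k})$, and the case $\nu=k$ contributes $p^{k-1}$ (for $l>k$ no such term is needed at all). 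So the proposal identifies the right starting formula and the right general shape of a case analysis, but the case analysis itself --- the exact evaluations, the vanishing statements, and the mechanism behind both the factor $k+1$ and the exponent $\tfrac23$ --- is missing, and the quantitative claims that substitute for it are false as stated. This is a genuine gap: as written the argument would not yield the bound of Lemma \ref{lemma:Rbound}.
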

Remark. For $k \neq l$, our proof shows that we can replace $p^{\nu+l}$ by $p^{\frac{\nu}{2} + l}$, and restrict $\nu \leq \tfrac12 \min(k,l)$.  It is plausible one can save this factor $p^{\nu/2}$ for $k=l$, 
but since this would not improve Theorem \ref{thm:BilinearKloosterman}, and since our proof is already quite long, we avoided this line of inquiry.  The key point in Lemma \ref{lemma:Rbound} is that the ``loss'' from the factor $p^{\nu}$ is countered by the condition $p^{\nu} | t$.  This has the practical effect that large values of $\nu$ give essentially the same bound as for $\nu = 0$.

The proof of Lemma \ref{lemma:Rbound} is given in Section \ref{section:Rbound}.

\begin{mycoro}
\label{coro:Rbound}
 We have
\begin{equation}
 \mathcal{R}(t,D_1, D_2) \ll D_2 (D_1 D_2)^{\varepsilon}  \sum_{\substack{d|t \\ d^3 | (D_1, D_2)^2 }} d.
 \end{equation}
\end{mycoro}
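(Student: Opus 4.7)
The plan is to combine the joint multiplicativity of $\mathcal{R}$ proved in the previous lemma with the prime-power estimate of Lemma~\ref{lemma:Rbound}. Write $D_1 = \prod_p p^{k_p}$, $D_2 = \prod_p p^{l_p}$, and let $\nu_p := \nu_p(t)$. The divisor sum on the right-hand side is multiplicative: writing $d = \prod_p p^{j_p}$, the constraints $d\mid t$ and $d^3\mid(D_1,D_2)^2$ become $j_p\le\nu_p$ and $j_p\le\lfloor\tfrac{2}{3}\min(k_p,l_p)\rfloor$ respectively, so
\begin{equation*}
\sum_{\substack{d\mid t\\ d^3\mid(D_1,D_2)^2}} d \;=\; \prod_p\Big(\sum_{j=0}^{\min(\nu_p,\,\lfloor\frac{2}{3}\min(k_p,l_p)\rfloor)} p^j\Big).
\end{equation*}
Since $\mathcal{R}(t,D_1,D_2) = \prod_p \mathcal{R}(t, p^{k_p}, p^{l_p})$, the corollary reduces to the local inequality
\begin{equation*}
\mathcal{R}(t, p^{k_p}, p^{l_p}) \;\leq\; p^{l_p}(k_p+l_p+1)^C \sum_{j=0}^{\min(\nu_p,\,\lfloor\frac{2}{3}\min(k_p,l_p)\rfloor)} p^j
\end{equation*}
for some absolute $C$. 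Taking the product over $p$ then recovers $D_2$, the desired divisor sum, and an extra factor $\prod_p(k_p+l_p+1)^C \ll (D_1 D_2)^{\varepsilon}$ from the standard bound $\tau(n)\ll n^\varepsilon$.

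For the generic prime ($k_p, l_p \geq 1$), Lemma~\ref{lemma:Rbound} gives
\begin{equation*}
\mathcal{R}(t, p^{k_p}, p^{l_p}) \leq (k_p+1)p^{l_p} + p^{\nu_p + l_p}\,\delta\!\left(\nu_p \leq \tfrac{2}{3}\min(k_p,l_p)\right).
\end{equation*}
If $\nu_p\leq\tfrac{2}{3}\min(k_p,l_p)$, the local divisor sum contains the term $p^{\nu_p}$, so the target right-hand side dominates $p^{l_p+\nu_p}$, while the piece $(k_p+1)p^{l_p}$ is absorbed by the $(k_p+l_p+1)^C$ factor. If $\nu_p>\tfrac{2}{3}\min(k_p,l_p)$ the indicator vanishes, leaving $(k_p+1)p^{l_p}$, which is again trivially bounded by $p^{l_p}(k_p+l_p+1)$.

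The boundary cases $k_p=0$ or $l_p=0$ require a short direct calculation from the definition of $\mathcal{R}$; in these cases the local divisor sum forces $j_p=0$ and so equals $1$. When $D_1=1$, expanding the definition collapses $S(a,y,x,1,1,p^l)$ to the classical Kloosterman sum $S(y,1;p^l)$, whose partial Fourier transform in $y$ at frequency $u$ has absolute value at most $1$ and vanishes unless $(u,p)=1$; summing over $u\bmod p^l$ yields $\mathcal{R}(t,1,p^l)\leq\phi(p^l)\leq p^l$. An analogous computation gives $\mathcal{R}(t,p^k,1)\leq 1$, which matches $D_2=1$ in this case. The main difficulty has already been overcome inside Lemma~\ref{lemma:Rbound}; what remains here is multiplicative bookkeeping together with these boundary computations, the only mild subtlety being the absorption of the $(k_p+1)$ factors by the divisor bound after the product over primes.
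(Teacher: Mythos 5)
Your argument is correct and is essentially the paper's own proof: both sides are multiplicative (the joint multiplicativity lemma for $\mathcal{R}$ and the prime-by-prime factorization of the divisor sum), so the statement reduces to the prime-power case, where Lemma \ref{lemma:Rbound} gives exactly what is needed, with the $(k_p+1)$-type factors absorbed into $(D_1D_2)^{\varepsilon}$. Your explicit treatment of the cases $k_p=0$ or $l_p=0$ is fine but not a new route, since Lemma \ref{lemma:Rbound} (as proved in Section \ref{section:Rbound}) already covers those cases.
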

\begin{proof}
Since both sides are multiplicative, it suffices to check on prime powers, in which case it follows immediately from Lemma \ref{lemma:Rbound}.
\end{proof}

\begin{mylemma}
\label{lemma:divisortypebound}
 Let $q \leq X$.  The number of integers $n \leq X$ that share the same set of prime divisors as $q$ (that is, such that $\nu_p(n) \geq 1$ iff $\nu_p(q) \geq 1$ for all primes $p$) is $\ll_{\varepsilon} X^{\varepsilon}$, for any $\varepsilon > 0$.
\end{mylemma}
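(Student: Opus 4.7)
The plan is to reduce the count to one of $P$-smooth integers, where $P = \text{rad}(q)$, and then apply Rankin's trick. The constraint $q \leq X$ (hence $P \leq X$) will ultimately force the primes dividing $P$ to be small enough for the resulting Euler product to be $X^{o(1)}$.

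First I would observe that $n$ shares the set of prime divisors of $q$ precisely when $\text{rad}(n) = P$, and since $P$ is squarefree, every such $n$ factors uniquely as $n = P \cdot m$ with $m \leq X/P$ and all prime factors of $m$ dividing $P$. Hence the count to estimate is at most the number of $P$-smooth integers in $[1, X/P]$. Applying Rankin's trick with a parameter $\delta > 0$ (to be chosen),
\[
\#\{m \leq X/P : m \text{ is } P\text{-smooth}\} \;\leq\; \Big(\frac{X}{P}\Big)^{\!\delta} \prod_{p \mid P}\frac{1}{1-p^{-\delta}} \;\leq\; X^\delta \prod_{p \mid P}\frac{1}{1-p^{-\delta}}.
\]

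The key step, and the one I expect to require the most care, is bounding the Euler product uniformly in $P \leq X$. Since $1/(1-p^{-\delta})$ is decreasing in $p$, the worst case is when the prime factors of $P$ are as small as possible, so for an upper bound one may take them to be $\{2, 3, \ldots, p_k\}$ with $k = \omega(P)$. The hypothesis $P \leq X$ then yields $\vartheta(p_k) = \log(p_k\#) \leq \log X$, hence $p_k \ll \log X$ by the prime number theorem. Taking logarithms, using $\log(1-x)^{-1} \ll x$ for $x \leq 1/2$, and invoking PNT once more,
\[
\log \prod_{p \mid P}\frac{1}{1-p^{-\delta}} \;\ll_\delta\; \sum_{p \leq C\log X} p^{-\delta} \;\ll\; \frac{(\log X)^{1-\delta}}{\log \log X} \;=\; o(\log X).
\]
Choosing $\delta = \varepsilon/2$, the overall estimate becomes $X^{\varepsilon/2} \cdot X^{o(1)}$, which is $\ll_\varepsilon X^\varepsilon$ for $X$ large (and the bound is trivial for bounded $X$), completing the proof.
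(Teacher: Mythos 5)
Your argument is correct, but it controls the key Euler product differently from the paper, and in a heavier way. Both proofs start from Rankin's trick; the paper applies it directly to the integers $n=p_1^{a_1}\cdots p_r^{a_r}$ with all $a_i\geq 1$, so each prime contributes $\sum_{a\geq 1}p^{-a\varepsilon}=(p^{\varepsilon}-1)^{-1}$, and the resulting product $\prod_i (p_i^{\varepsilon}-1)^{-1}$ is bounded by a constant $C(\varepsilon)$ by the trivial observation that only finitely many primes satisfy $p^{\varepsilon}\leq 2$ (all other factors are $\leq 1$); no information about the size of $q$ and no prime number theorem is needed. You instead write $n=Pm$ with $P=\mathrm{rad}(q)$ and then discard the saving $(X/P)^{\delta}\leq X^{\delta}$, which is exactly what makes your product $\prod_{p\mid P}(1-p^{-\delta})^{-1}$ unbounded in $P$; you then have to recover this loss by noting $\vartheta(p_k)\leq \log X$, so $p_k\ll\log X$, and estimating $\sum_{p\leq C\log X}p^{-\delta}\ll (\log X)^{1-\delta}/\log\log X=o(\log X)$ via the prime number theorem (Chebyshev suffices). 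That step is sound, modulo the small caveat that $\log(1-x)^{-1}\ll x$ only for $x\leq 1/2$, so the $O_{\delta}(1)$ primes $p<2^{1/\delta}$ must be handled separately — each factor is $\leq (1-2^{-\delta})^{-1}=O_{\delta}(1)$, so they only shift the bound by an admissible constant. Note also that if you simply keep the factor $P^{-\delta}$ instead of dropping it, you get $X^{\delta}\prod_{p\mid P}(p^{\delta}-1)^{-1}$, which is precisely the paper's expression and is $\ll_{\delta}X^{\delta}$ by the finite-set-of-small-primes remark, making the smoothness/primorial and PNT input unnecessary. In short: your route is valid and yields the stated $\ll_{\varepsilon}X^{\varepsilon}$, but it trades a one-line uniform bound on the Euler product for an appeal to the distribution of primes that the problem does not require.
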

\begin{proof}
 This is similar to a divisor-type bound.  Suppose that the prime factors occuring in $q$ are $p_1, \dots, p_r$.  Then by Rankin's trick, we have
 \begin{equation}
 \label{eq:divisortypeboundestimateinproof}
  \sum_{\substack{n = p_1^{a_1} \dots p_r^{a_r} \leq X \\ a_i \geq 1, \text{ all } i }} 1 \leq  \sum_{a_1=1}^{\infty} \dots \sum_{a_r=1}^{\infty} \Big(\frac{ X}{p_1^{a_1} \dots p_r^{a_r}} \Big)^{\varepsilon} = \frac{X^{\varepsilon}}{(p_1^{\varepsilon}-1) \dots (p_r^{\varepsilon}-1)}.
 \end{equation}
Given $\varepsilon > 0$, there are finitely many primes such that $p^{\varepsilon} \leq 2$.  Then with $C(\varepsilon) = \prod_{p: p^{\varepsilon} \leq 2} (p^{\varepsilon}-1)^{-1}$, we may bound the right hand side of \eqref{eq:divisortypeboundestimateinproof} by $C(\varepsilon) X^{\varepsilon}$.
\end{proof}

\section{Proof of Theorems \ref{thm:BilinearKloosterman} and \ref{thm:BilinearKloostermanHversion}}
\label{section:bilinearwithKloosterman}
\subsection{Initial decomposition}
Our first steps involve factoring $D_1$ and $D_2$ in appropriate ways and using the Chinese remainder theorem to correspondingly factor the Kloosterman sum.

First we extract the largest divisors of $D_1$ and $D_2$ that are coprime to each other.  Precisely,
write $D_1 = g_1 E_1$, $D_2 = g_2 E_2$, where $(E_1 E_2, g_1 g_2) = 1$, $(E_1, E_2) = 1$, and $g_1$ and $g_2$ have the same set of prime divisors (meaning, $\nu_p(g_1) \geq 1$ iff $\nu_p(g_2) \geq 1$).  
Then by \cite[Property 4.7]{BFG}, we have
\begin{equation}
 S(1,m,n,1,g_1 E_1, g_2 E_2) = S(\overline{g_1}^2 g_2, \overline{g_2}^2 g_1 m, n, 1, E_1, E_2) S(\overline{E_1}^2 E_2, \overline{E_2}^2 E_1 m, n, 1, g_1, g_2).
\end{equation}
By \eqref{eq:KloostermanFactorizationCoprimeModuli}, we have
\begin{equation}
 S(\overline{g_1}^2 g_2, \overline{g_2}^2 g_1 m, n, 1, E_1, E_2) = S(E_2 \overline{g_1}^2 g_2, n, E_1) S(E_1 \overline{g_2}^2 g_1, m, E_2).
\end{equation}

Therefore,
\begin{equation}
 |\mathcal{S}| \leq \sumprime_{g_1, g_2, E_1, E_2} \Big| \sum_{m,n} \alpha_m \beta_n
 S(E_2 \overline{g_1}^2 g_2, n, E_1) S(E_1 \overline{g_2}^2 g_1, m, E_2)
 S(\overline{E_1}^2 E_2, \overline{E_2}^2 E_1 m, n, 1, g_1, g_2)
 \Big|,
\end{equation}
where the prime represents the conditions:
\begin{equation}
 g_1 E_1 \leq X_1, \quad g_2 E_2 \leq X_2, \quad (E_1 E_2, g_1 g_2) = 1, \quad (E_1, E_2) = 1, \quad \nu_p(g_1) \geq 1 \text{ iff } \nu_p(g_2) \geq 1.
\end{equation}

We factor the moduli further by extracting the 
prime factors of $g_1$ and $g_2$ such that $\nu_p(g_1) = \nu_p(g_2)=1$.  
Precisely, write $g_1 = q h_1$, $g_2 = q h_2$ where $q$ is the product of primes such that $\nu_p(g_1) = \nu_p(g_2) = 1$, so that for all $p | h_1 h_2$, $\nu_p(h_1) \geq 2$ or $\nu_p(h_2) \geq 2$, and $(q, h_1 h_2) = 1$.
Then we have
\begin{multline}
 S(\overline{E_1}^2 E_2, \overline{E_2}^2 E_1 m, n, 1, q h_1, q h_2)
 \\
 =
 S(\overline{(h_1 E_1)}^2 h_2 E_2, \overline{(h_2 E_2)}^2 h_1 E_1 m, n, 1, q, q)
 S(\overline{q} \overline{E_1}^2 E_2, \overline{q} \overline{E_2}^2 E_1 m, n, 1,  h_1, h_2).
\end{multline}
By \eqref{eq:KloostermanEvaluationPrimePrime}, and using $(ab,p) = 1$, we have 
\begin{equation}
 S(a, b m, n, 1, p, p) = S(m,0,p) S(n,0,p) + p = \begin{cases}
                                                  p^2-p+1, \quad p |(m,n), \\
                                                  p+1, \quad p \nmid m, p \nmid n,
                                                  \\
                                                  1, \quad p|m, p \nmid n
                                                  \\
                                                  1, \quad p|n, p \nmid m,
                                                 \end{cases}
\end{equation}
and so by the Chinese remainder theorem, if $q$ is squarefree and $(ab,q) = 1$, then 
\begin{equation}
 S(a, b m, n, 1, q, q) = \prod_{\substack{ p | q \\ p \nmid m, p \nmid n}} (p+1) 
 \prod_{\substack{ p | (m,n,q) }} (p^2 -p+1).
\end{equation}

Set $d_1 = (n,q)$ and $d_2 = (m,q)$, and define
\begin{equation}
 A(d_1, d_2,q) =  
 \prod_{\substack{ p | d_1, d_2 }} (p^2 -p+1)
 \prod_{\substack{ p | q, p \nmid d_1, p \nmid d_2}} (p+1).
\end{equation}
Then the above calculations show
\begin{equation}
 S(\overline{(h_1 E_1)}^2 h_2 E_2, \overline{(h_2 E_2)}^2 h_1 E_1 m, n, 1, q, q) = A(d_1, d_2, q).
\end{equation}
One easily checks
\begin{equation}
\label{eq:Abound}
 A(d_1, d_2, q) \ll q^{1+\varepsilon} \frac{(d_1, d_2)^3}{d_1 d_2}.
 \end{equation}

Summarizing the above discussion, we have shown
\begin{multline}
\label{eq:SboundSeparatedSomewhat}
  |\mathcal{S}| \leq \sumprime_{h_1, h_2, q, E_1, E_2}  \sum_{d_1, d_2 | q} A(d_1, d_2, q)  \Big| \sum_{(n,q) = d_1} \sum_{(m,q) = d_2} \alpha_m \beta_n
 S(\overline{q} \overline{E_1}^2 E_2, \overline{q} \overline{E_2}^2 E_1 m, n, 1,  h_1, h_2)
 \\
 S(E_2 \overline{q} \overline{h_1}^2 h_2, n, E_1) S(E_1 \overline{q} \overline{h_2}^2 h_1, m, E_2)
 \Big|,
\end{multline}
where the prime on the sum is updated to represent the conditions:
\begin{gather}
\begin{split}
\label{eq:primesumconditions2}
 q h_1 E_1 \leq X_1, \quad q h_2 E_2 \leq X_2, \quad (E_1 E_2, q h_1 h_2) = 1, \quad (E_1, E_2) = 1, \quad \nu_p(q) \in \{0,1 \},
 \\
(q, h_1 h_2) = 1, \quad  \nu_p(h_1) = 0 \text{ iff } \nu_p(h_2) = 0, \quad p | h_1 h_2 \Rightarrow \nu_p(h_1) \geq 2 \text{ or } \nu_p(h_2) \geq 2.
\end{split}
\end{gather}

Remark. 
Heuristically, the sum over $h_1$ and $h_2$ is somewhat small since both integers share the same prime divisors, and for each prime $p | h_1 h_2$, $p^2$ divides at least one of $h_1, h_2$.  If we let $\mathcal{S}'''$ denote the terms on the right hand side of \eqref{eq:SboundSeparatedSomewhat} with $h_1 = h_2 = 1$, then following the arguments of Section \ref{section:HeuristicBilinearKloosterman}, one can derive
\begin{equation}
 \mathcal{S}''' \ll (X_1 + X_2)^{1+\varepsilon} (X_1^2 + \min(X_1, X_2) N)^{1/2} (X_2^2 + \min(X_1, X_2) N)^{1/2} \|\alpha \| \| \beta \|.
\end{equation}
This is better than our final bound on $\mathcal{S}$ given by Corollary \ref{coro:Sbound} for large $X_1, X_2$, so perhaps a more careful analysis of $h_1$ and $h_2$ could lead to a modest improvement.

If either $h_1$ or $h_2$ is large, then it can be beneficial to estimate the sum with absolute values, exploiting the reduced number of moduli under consideration.  Define $\mathcal{S}_{q h_i \leq H_i}$ to be the sum on the right hand side of 
\eqref{eq:SboundSeparatedSomewhat} with $q h_1 \leq H_1$ and $q h_2 \leq H_2$, and similarly define $\mathcal{S}_{qh_1 > H_1}$ and $\mathcal{S}_{qh_2 > H_2}$ corresponding to the terms with $qh_1 > H_1$ and $qh_2 > H_2$, respectively.
Then we have the decomposition $|\mathcal{S}| \leq \mathcal{S}_{q h_1 > H_1} + \mathcal{S}_{qh_2 > H_2} + \mathcal{S}_{q h_i \leq H_i}$.  In the proof of Theorem \ref{thm:BilinearKloosterman}, we may set $H_1 = X_1$, $H_2 = X_2$, and then $\mathcal{S}_{qh_i > H_i} = 0$, for $i=1,2$, so these terms may be discarded.

\subsection{Large $h_i$}
In this subsection we estimate $\mathcal{S}_{q h_1 > H_1}$ and $\mathcal{S}_{qh_2 > H_2}$.
\begin{mylemma}
\label{lemma:LargeH1Bound}
 We have
 \begin{equation}
 \label{eq:LargeH1Bound1}
  \mathcal{S}_{qh_1 > H_1} \ll H_1^{-1} (X_1 X_2)^{3/2 + \varepsilon} N \| \alpha \| \| \beta \|,
 \end{equation}
and
 \begin{equation}
 \label{eq:LargeH1Bound2}
  \mathcal{S}_{qh_2 > H_2} \ll H_2^{-1} (X_1 X_2)^{3/2 + \varepsilon} N \| \alpha \| \| \beta \|.
 \end{equation}
\end{mylemma}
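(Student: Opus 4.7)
The plan is to bound $\mathcal{S}_{qh_1 > H_1}$ by taking absolute values and applying the Weil-type bound of Lemma \ref{lemma:KloostermanUpperBound} (Stevens), with the restriction $qh_1 > H_1$ providing an $H_1^{-1}$ saving beyond the trivial bound \eqref{eq:SboundTrivial}.

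First, I would take absolute values in \eqref{eq:SboundSeparatedSomewhat}. Since the four factors in the summand reassemble via the Chinese remainder theorem to $S(1, m, n, 1, D_1, D_2)$ with $D_1 = qh_1 E_1$ and $D_2 = qh_2 E_2$, the sum $\mathcal{S}_{qh_1 > H_1}$ is bounded by $\sum |\alpha_m| |\beta_n| |S(1, m, n, 1, D_1, D_2)|$ over pairs $(D_1, D_2)$ where the part of $D_1$ supported on the primes of $D_2$---namely $qh_1$---exceeds $H_1$. A Cauchy--Schwarz application to the $m,n$-sum then produces the factor $N \|\alpha\| \|\beta\|$, reducing matters to bounding $\sum_{D_1, D_2} |S(1, m, n, 1, D_1, D_2)|$ uniformly in $m, n$ over such pairs.

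Next, I would apply Stevens' bound $|S| \ll (D_1 D_2)^{1/2+\varepsilon}(D_1, D_2)^{1/2}(mn, [D_1, D_2])^{1/2}$ and sum over the restricted range. Writing $D_1 = u E_1$ with $u > H_1$ consisting of the prime powers of $D_1$ at primes dividing $D_2$ (so $u$ divides some power of $D_2$ and $(E_1, D_2) = 1$), the $E_1$-summation contributes $(X_1/u)^{3/2+\varepsilon}$, and by Lemma \ref{lemma:divisortypebound} the number of admissible $u \leq X_1$ sharing the prime support of $D_2$ is $\ll X_1^\varepsilon$. A dyadic decomposition then yields $\sum_{H_1 < u \leq X_1,\, u \mid D_2^\infty} u^{-1+\varepsilon} \ll H_1^{-1+\varepsilon}$, producing the desired $H_1^{-1}$ saving. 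After combining with the $D_2$-summation via the standard divisor-type analysis of the Stevens sum, the remaining factors assemble to $(X_1 X_2)^{3/2+\varepsilon}$, yielding the claimed bound.

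The main obstacle is the careful control of the gcd-factor $(D_1, D_2)^{1/2}$ in Stevens' bound, which is largest when $D_2 \mid D_1$. The na\"ive estimate $(D_1, D_2) \leq D_2$ leads to an $X_2$-loss relative to the trivial sum, so a finer multiplicative treatment---via Lemma \ref{lemma:divisortypebound} applied to the tail sum $\sum_{u > H_1} u^{-1+\varepsilon}$ restricted to $u \mid D_2^\infty$---is needed to preserve the full $H_1^{-1}$ savings while keeping the $D_2$-sum of size $X_2^{3/2+\varepsilon}$. Finally, the estimate \eqref{eq:LargeH1Bound2} follows by the symmetric argument, using the Kloosterman identity $S(1, m, n, 1, D_1, D_2) = S(1, n, m, 1, D_2, D_1)$ from \cite[Property 4.4]{BFG} to swap $D_1 \leftrightarrow D_2$ and $m \leftrightarrow n$, reducing to the same type of sum with the constraint $qh_2 > H_2$.
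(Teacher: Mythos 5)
There is a genuine gap: the claimed tail estimate $\sum_{H_1 < u \leq X_1,\, u \mid D_2^{\infty}} u^{-1+\varepsilon} \ll H_1^{-1+\varepsilon}$ is not what Stevens' bound actually delivers, because you have dropped the factor $(D_1,D_2)^{1/2}=(u,D_2)^{1/2}$ from the per-term count, and no ``finer multiplicative treatment'' of it can rescue the wholesale use of Lemma \ref{lemma:KloostermanUpperBound}: that lemma is genuinely lossy precisely in the configurations that dominate the tail. Take the simplest case $u=q$ a prime with $H_1<q\leq X_1$, $h_1=h_2=1$, so $D_1=qE_1$, $D_2=qE_2$ with $(E_1E_2,q)=1$. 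For generic $m,n$ coprime to $q$ the exact evaluation \eqref{eq:KloostermanEvaluationPrimePrime} (already built into \eqref{eq:SboundSeparatedSomewhat} through $A(d_1,d_2,q)\ll q^{1+\varepsilon}(d_1,d_2)^3/(d_1d_2)$) gives $|S(1,m,n,1,D_1,D_2)|\approx q\,|S(\cdot,n;E_1)S(\cdot,m;E_2)|$, whereas Stevens gives $(D_1D_2)^{1/2+\varepsilon}q^{1/2}$, i.e.\ an extra $q^{1/2}$. Summing your bound over this family,
\begin{equation}
\sum_{q>H_1}\ \sum_{E_1\leq X_1/q}\ \sum_{E_2\leq X_2/q}(qE_1\,qE_2)^{1/2+\varepsilon}q^{1/2}\ \asymp\ (X_1X_2)^{3/2+\varepsilon}\sum_{q>H_1}q^{-3/2}\ \asymp\ (X_1X_2)^{3/2+\varepsilon}H_1^{-1/2},
\end{equation}
so your method caps out at $H_1^{-1/2}$, short of \eqref{eq:LargeH1Bound1} by $H_1^{1/2}$. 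A similar (milder) loss occurs at primes where exactly one of $h_1,h_2$ has exponent $1$: e.g.\ $h_1=p^2,h_2=p$ gives with Stevens $(h_1,h_2)^{1/2}/(h_1h_2)=p^{-5/2}$ and a tail of only $H_1^{-3/4}$. This is exactly why the paper does \emph{not} reassemble the sum and apply Stevens: it keeps the exact evaluation $A(d_1,d_2,q)$ for the squarefree common part, and for the mixed-exponent primes uses Lemmas \ref{lemma:Sevaluation} and \ref{lemma:KloostermanVanishes} to show the local factor \emph{vanishes} unless $j_1\mid m$ (resp.\ $k_2\mid n$), with no $(j_1,j_2)^{1/2}$-type gcd loss; the resulting divisibility constraints and gcd factors are then absorbed inside the bilinear $m,n$-sum via \eqref{eq:BilinearSumBoundClaim}, which is what produces the full $H_1^{-1}$ in \eqref{eq:h1BigSum}. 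Stevens is reserved only for the primes where both exponents are $\geq 2$, where its gcd factor is harmless.

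A secondary problem is your reduction ``uniformly in $m,n$'': the factor $(mn,[D_1,D_2])^{1/2}$ in Stevens' bound is not $O((X_1X_2)^{\varepsilon})$ for individual $m,n$ (it can be as large as $[D_1,D_2]^{1/2}$), so you cannot first apply Cauchy--Schwarz to extract $N\|\alpha\|\|\beta\|$ and then bound the Kloosterman sum uniformly; the gcd factors must stay inside the $m,n$-sum and be handled against the coefficient sums, as in the paper's claim \eqref{eq:BilinearSumBoundClaim} via $\sum_{n\leq N}|\beta_n|(n,Q)^{1/2}\leq\tau(Q)N^{1/2}\|\beta\|$ and the density saving from the divisibility constraints. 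Your symmetry reduction of \eqref{eq:LargeH1Bound2} to \eqref{eq:LargeH1Bound1} is fine, but the main estimate itself needs the finer per-prime analysis rather than the Weil-type bound alone.
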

\begin{proof}
 Define
\begin{equation}
T(m,n,D_1, D_2) = \max_{\substack{(a, D_1) = 1 \\ (b, D_2) = 1}} |S(a, bm, n, 1, D_1, D_2)|. 
\end{equation}
By the Weil bound, we have
\begin{multline}
  \mathcal{S}_{qh_1 > H_1} \leq  \sumprime_{h_2,  E_1, E_2} \sumprime_{q h_1 > H_1}  \sum_{d_1, d_2 | q} A(d_1, d_2, q)   \sum_{(n,q) = d_1} \sum_{(m,q) = d_2} |\alpha_m \beta_n| 
  \\
  \tau(E_1) \tau(E_2) (E_1 E_2)^{1/2}
 T(m,n,h_1, h_2).
\end{multline}
Trivially summing over $E_1$ and $E_2$ and using \eqref{eq:Abound}, we obtain
\begin{multline}
  \mathcal{S}_{qh_1 > H_1} \ll (X_1 X_2)^{3/2+\varepsilon} \sum_{q \leq \min(X_1, X_2)} \sum_{d_1, d_2 | q} \frac{ (d_1, d_2)}{q^2}   \sumprime_{h_1 > q^{-1} H_1} \sumprime_{h_2} \frac{1}{(h_1 h_2)^{3/2}}  
  \\
  \sum_{(n,q) = d_1} \sum_{(m,q) = d_2} |\alpha_m \beta_n| 
 T(m,n,h_1, h_2).
\end{multline}
Write the prime factorizations of $h_1$ and $h_2$ as follows:
\begin{gather}
\label{eq:h1h2primefactorizationdefinition}
\begin{split}
 h_1 = j_1 k_1 l_1, \quad j_1 = p_1 \dots p_r, \quad k_1= q_1^{b_1} \dots q_s^{b_s}, \quad l_1 =  \rho_1^{c_1} \dots \rho_t^{c_t}
 \\
 h_2 = j_2 k_2 l_2, \quad j_2= p_1^{a_1} \dots p_r^{a_r}, \quad k_2 = q_1 \dots q_s, \quad  l_2 =  \rho_1^{\gamma_1} \dots \rho_t^{\gamma_t},
 \end{split}
\end{gather}
where $a_i, b_i, c_i, \gamma_i \geq 2$, for all $i$, $(p_i, q_j \rho_k) = (q_j, \rho_k) = 1$ for all $i,j,k$, and all $p_i, q_j, \rho_k$ are prime.  

We first estimate $T(m,n,j_1, j_2)$.  Suppose $l \geq 2$.
By Lemmas \ref{lemma:Sevaluation} and \ref{lemma:KloostermanVanishes}, we have
\begin{equation}
 |S(a,bm, n, 1, p, p^l)| = |S(n,0;p) S(m,p;p^l)|. 
\end{equation}
If $p \nmid m$ then this vanishes, while if $p|m$ then we have
\begin{equation}
 |S(a,bm, n, 1, p, p^l)| = p |S(n,0;p) S(\tfrac{m}{p},1;p^{l-1})| \leq p (n,p) \delta(p|m) l p^{\frac{l-1}{2}}.
\end{equation}
Therefore,
\begin{equation}
 T(m,n,j_1,j_2) \ll (j_1 j_2)^{1/2+\varepsilon} (n,j_1) \delta(j_1 | m),
\end{equation}
and by symmetry,
\begin{equation}
 T(m,n,k_1, k_2) \ll (k_1 k_2)^{1/2+\varepsilon} (m,k_2) \delta(k_2 | n).
\end{equation}
Finally, by Lemma \ref{lemma:KloostermanUpperBound}, we have
\begin{equation}
T(m,n,l_1, l_2) \leq (l_1 l_2)^{1/2+\varepsilon} ((l_1, l_2)(mn, [l_1, l_2]))^{1/2}.
\end{equation}
Therefore, we have
\begin{multline}
\label{eq:SlargeH1Sum}
\mathcal{S}_{qh_1> H_1} \ll (X_1 X_2)^{3/2+\varepsilon}  \sum_{q \leq \min(X_1, X_2)} \sum_{d_1, d_2 | q} \frac{ (d_1, d_2)}{q^2}
\sumprime_{\substack{h_1 > q^{-1} H_1}} \frac{1}{h_1} \sumprime_{\substack{h_2 }} \frac{(l_1, l_2)^{1/2}  }{ h_2}
\\
\sum_{\substack{n \equiv 0 \shortmod{k_2} \\ (n,q) = d_1}} \sum_{\substack{m \equiv 0 \shortmod{j_1} \\ (m,q) = d_2}} |\alpha_m \beta_n| (mn, [l_1, l_2])^{1/2} (n, j_1) (m,k_2).
\end{multline}
We claim
\begin{equation}
\label{eq:BilinearSumBoundClaim}
 \sum_{\substack{n \equiv 0 \shortmod{k_2} \\ (n,q) = d_1}} \sum_{\substack{m \equiv 0 \shortmod{j_1} \\ (m,q) = d_2}} |\alpha_m \beta_n| (mn, [l_1, l_2])^{1/2} (n, j_1) (m,k_2) \ll \frac{N (X_1 X_2)^{\varepsilon}}{(d_1 d_2)^{1/2}} \| \alpha \| \| \beta \|.
\end{equation}
Toward this, we first observe the simple bound
\begin{equation}
\sum_{n \leq N} |\alpha_n| (n, q)^{1/2} 
\leq \tau(q) N^{1/2} \| \alpha \|.
\end{equation}
Using the trivial inequalities $(mn, [l_1, l_2]) \leq (m, l_1 l_2) (n, l_1 l_2)$, $(n,j_1) \leq (n, j_1)^{1/2} j_1^{1/2}$, $(m,k_2) \leq (m,k_2)^{1/2} k_2^{1/2}$, and the fact that $(j_1, k_2) = 1$, we derive the claim.

Inserting \eqref{eq:BilinearSumBoundClaim} into \eqref{eq:SlargeH1Sum}, we conclude
\begin{equation}
\mathcal{S}_{qh_1 > H_1} \ll (X_1 X_2)^{3/2+\varepsilon} N \| \alpha \| \| \beta \| 
\sum_{q \leq \min(X_1, X_2)} \sum_{d_1, d_2 | q} \frac{(d_1, d_2)}{(d_1 d_2)^{1/2} q^2} 
\sumprime_{h_1 > q^{-1} H_1} \frac{1}{h_1} \sumprime_{h_2} \frac{(l_1, l_2)^{1/2}}{h_2},
\end{equation}
and to complete the proof of Lemma \ref{lemma:LargeH1Bound} it now suffices to show
\begin{equation}
\label{eq:h1BigSum}
 \sumprime_{h_1 > H} \frac{1}{h_1} \sumprime_{h_2} \frac{(l_1, l_2)^{1/2}}{h_2} \ll H^{-1} (X_1 X_2)^{\varepsilon},
\end{equation}
where $H > 0$, using the easy estimate $(d_1, d_2) \leq (d_1 d_2)^{1/2}$, and trivially summing over $q$ (it is essentially a harmonic series).  

We now prove \eqref{eq:h1BigSum}. First we examine the inner sum over $h_2$.
Writing the expression in terms of the prime factorizations \eqref{eq:h1h2primefactorizationdefinition}, we have
\begin{equation}
\sumprime_{h_2} \frac{(l_1, l_2)^{1/2}}{h_2 } = \sum_{a_1, \dots, a_r, \gamma_1, \dots, \gamma_t \geq 2} \frac{ \rho_1^{\frac{\min(c_1, \gamma_1)}{2}} \dots \rho_t^{\frac{\min(c_t, \gamma_t)}{2}}}
{p_1^{a_1} \dots p_r^{a_r} q_1^{} \dots q_s^{} \rho_1^{\gamma_1} \dots \rho_t^{\gamma_t}}.
\end{equation}
The reader may recall that once $h_1$ is fixed, the prime divisors of $h_2$ are already determined, which explains why the sum is only over the exponents $a_i, \gamma_i$.
It is easily noted that
\begin{equation}
 \sum_{a_1, \dots, a_r \geq 2} \frac{1}
{p_1^{a_1} \dots p_r^{a_r} } \leq \frac{2^r}{(p_1 \dots p_r)^{2}}, \quad
 \sum_{\gamma_1, \dots, \gamma_t \geq 2} \frac{ \rho_1^{\frac{\min(c_1, \gamma_1)}{2}} \dots \rho_t^{\frac{\min(c_t, \gamma_t)}{2}}}
{\rho_1^{\gamma_1} \dots \rho_t^{\gamma_t}} \ll \frac{2^t}{\rho_1 \dots \rho_t},
\end{equation}
with an absolute implied constant,
and so,
\begin{equation}
\frac{1}{h_1} \sumprime_{h_2} \frac{(l_1, l_2)^{1/2}}{h_2 }
\ll \frac{2^{r+t}}{(p_1 \dots p_r)^{3} q_1^{ b_1+1} \dots q_s^{b_s+1} \rho_1^{c_1 + 1} \dots \rho_t^{c_t + 1}}.
\end{equation}
Inserting this into the left hand side of \eqref{eq:h1BigSum}, and recalling the implicit condition $h_1 \leq X_1$, we have
\begin{multline}
\sumprime_{h_1 > H} \frac{1}{h_1} \sumprime_{h_2} \frac{(l_1, l_2)^{1/2}}{h_2} \ll  \sum_{p_1 \dots p_r q_1^{b_1} \dots q_s^{b_s} \rho_1^{c_1} \dots \rho_t^{c_t} > H} \frac{2^{r+t}}{(p_1 \dots p_r)^{3} q_1^{ b_1+1} \dots q_s^{b_s+1} \rho_1^{c_1 + 1} \dots \rho_t^{c_t + 1}} 
 \\
 \leq 
\frac{1}{H}  \sum_{p_1 \dots p_r q_1^{b_1} \dots q_s^{b_s} \rho_1^{c_1} \dots \rho_t^{c_t} > H}  \frac{2^{r+t}}{(p_1 \dots p_r)^{2} q_1^{} \dots q_s^{} \rho_1^{} \dots \rho_t^{}} \ll H^{-1} X_1^{\varepsilon}.
\end{multline}
This shows \eqref{eq:h1BigSum}, and concludes the proof of \eqref{eq:LargeH1Bound1}.  The other estimate \eqref{eq:LargeH1Bound2} follows from \eqref{eq:LargeH1Bound1} by symmetry.
\end{proof}

\subsection{Small $h_i$}
The main goal of this subsection is
\begin{mylemma}
\label{lemma:SmallH1Bound}
We have
\begin{equation}
 \mathcal{S}_{qh_i \leq H_i} \ll (X_1 H_2 + X_2 H_1) (X_1 X_2 N)^{\varepsilon} M^*(\alpha)^{1/2} M^*(\beta)^{1/2}.
\end{equation}
\end{mylemma}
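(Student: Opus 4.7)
The plan is to Fourier-expand the $GL_3$ Kloosterman sum in \eqref{eq:SboundSeparatedSomewhat} to decouple its $m$ and $n$ dependencies, then apply Cauchy-Schwarz to separate the $\alpha$ and $\beta$ sums, and finally identify the resulting expressions with $M^*(\alpha)$ and $M^*(\beta)$.

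First I would invoke the Fourier inversion formula to write
\[
S(\overline{q}\overline{E_1}^2 E_2,\overline{q}\overline{E_2}^2 E_1 m, n, 1, h_1, h_2) = \sum_{t \shortmod{h_1}} \sum_{u \shortmod{h_2}} e\!\left(\tfrac{tn}{h_1} + \tfrac{u \overline{q}\overline{E_2}^2 E_1 m}{h_2}\right) \widehat{S}(\overline{q}\overline{E_1}^2 E_2, u, t, 1, h_1, h_2).
\]
For each fixed outer tuple $(q, d_1, d_2, h_1, h_2, E_1, E_2)$, introduce
\[
F(t) = \sum_{(n,q)=d_1} \beta_n\, e(tn/h_1)\, S(E_2 \overline{q}\overline{h_1}^2 h_2, n, E_1),
\]
together with its analogue $G(u)$ for $\alpha_m$. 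The inner triple sum becomes $\sum_{t,u} \widehat{S}(\ldots,u,t,\ldots) F(t) G(u)$. A Cauchy-Schwarz in $(t, u)$ using $\sum_u |\widehat{S}(a,u,t,\ldots)| \leq \mathcal{R}(t, h_1, h_2)$ and Lemma \ref{lemma:R'property} gives
\[
\Big|\sum_{t,u} \widehat{S}\, F\, G\Big| \leq \Big(\sum_t |F(t)|^2 \mathcal{R}(t, h_1, h_2)\Big)^{1/2} \Big(\sum_u |G(u)|^2 \mathcal{R}(u, h_2, h_1)\Big)^{1/2}.
\]
A further Cauchy-Schwarz across the outer parameters then gives $\mathcal{S}_{qh_i \leq H_i} \leq B_\beta^{1/2} B_\alpha^{1/2}$ where $B_\beta = \sum_{\text{params}} A \sum_t |F(t)|^2 \mathcal{R}(t, h_1, h_2)$ and $B_\alpha$ is the analogous quantity.

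To identify $B_\beta$ with $M^*(\beta)$, I would open the $GL_2$ Kloosterman in $F(t)$ as a sum over $v_1 \in (\mz/E_1\mz)^*$; the combined $n$-frequency is then $r/(h_1 E_1)$ with $r \equiv \overline{v_1} h_1 + t E_1 \shortmod{h_1 E_1}$. The map $(v_1, t) \leftrightarrow r$ is a bijection onto residues $r$ with $(r, E_1) = 1$, and moreover $\gcd(r, h_1) = \gcd(t, h_1)$. An internal Cauchy-Schwarz on $v_1$ combined with Parseval gives $\sum_t |F(t)|^2 \leq \phi(E_1) \sum_{(r, E_1)=1} |\hat\beta(r)|^2$ with $\hat\beta(r) = \sum_{(n,q)=d_1} \beta_n e(rn/(h_1 E_1))$. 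Grouping $r$ by $\delta = \gcd(r, h_1)$ and writing $r = \delta r^*$, $c = h_1 E_1/\delta$, the inner sum is exactly one term of $M^*(\beta)$, since $c \leq X_1/q$ and $(c, q) = 1$. Bounding $\mathcal{R}(t, h_1, h_2)$ via Corollary \ref{coro:Rbound}, combining with $\sum_{d_2 | q} A(d_1, d_2, q) \ll d_1 q^{1+\varepsilon}$ (a multiplicative computation from the explicit formula for $A$), and using the sparsity of admissible $(h_1, h_2)$ pairs under \eqref{eq:primesumconditions2} to keep $\sum_{h_2} h_2 D^*(\delta)$ controlled by $(H_2/q) (X_1 X_2)^\varepsilon$, one reorganizes the sum to conclude $B_\beta \ll X_1 H_2 (X_1 X_2 N)^\varepsilon M^*(\beta)$: the $X_1$ from the range $c \leq X_1/q$, the $H_2$ from the maximal $h_2 \leq H_2/q$ via the $h_2$-factor in $\mathcal{R}$. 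A symmetric analysis, via Lemma \ref{lemma:R'property} to swap the roles of $h_1$ and $h_2$, yields $B_\alpha \ll X_2 H_1 (X_1 X_2 N)^\varepsilon M^*(\alpha)$. Multiplying the two square roots and invoking AM-GM ($\sqrt{X_1 H_2 \cdot X_2 H_1} \leq X_1 H_2 + X_2 H_1$) completes the proof.

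The main obstacle lies in this last reorganization: the divisor-type factor $D^*(\delta) = \sum_{d \mid \delta,\, d^3 \mid (h_1, h_2)^2} d$ from Corollary \ref{coro:Rbound} must interact cleanly with the coprimality and sparsity conditions in \eqref{eq:primesumconditions2} (namely that $h_1, h_2$ share the same prime divisors, each appearing squared in at least one of them). Controlling the sum over $(h_1, h_2)$ requires the same style of prime-by-prime analysis as in the proof of Lemma \ref{lemma:LargeH1Bound}, but now with the arithmetic coefficients $\alpha_m, \beta_n$ retained inside large-sieve quantities rather than discarded. A second helpful simplification is that for fixed $c$ and $h_1$ the triple $(h_1, E_1, \delta)$ reduces to a single parameter via $m = \gcd(h_1, c)$, $E_1 = c/m$, $\delta = h_1/m$, which makes the $(h_1, E_1, \delta)$-sum effectively one-dimensional and amenable to the crude estimate $\sum \phi(E_1) \leq X_1/q\cdot(X_1X_2)^\varepsilon$.
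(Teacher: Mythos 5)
There is a genuine gap, and it is located exactly where you placed "the main obstacle": the treatment of the $GL_2$ Kloosterman sum $S(E_2\overline{q}\overline{h_1}^2h_2,n,E_1)$ inside $F(t)$ and of the $E_2$-average. Your step opens that Kloosterman sum and applies Cauchy--Schwarz in $v_1$, giving $|F(t)|^2\le \phi(E_1)\sumstar_{x\shortmod{E_1}}\big|\sum_n\beta_n e(xn/E_1)e(tn/h_1)\big|^2$ \emph{pointwise in $E_2$}; since the right side no longer depends on $E_2$, the subsequent sum over $E_2\le X_2/(qh_2)$ is necessarily trivial, and you pick up the \emph{product} $\phi(E_1)\cdot\frac{X_2}{qh_2}\approx\frac{X_1}{qh_1}\cdot\frac{X_2}{qh_2}$. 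The paper instead expands the square, completes the $E_2$-sum modulo $E_1$, and uses the orthogonality relation \eqref{eq:KloostermanProductCompleteSum} for products of $GL_2$ Kloosterman sums; this converts the Kloosterman sums into additive characters at the cost of the \emph{sum} $\frac{X_1}{qh_1}+\frac{X_2}{qh_2}$ (this is the saving of order $\min(X_1,X_2)$ discussed in Section \ref{section:HeuristicBilinearKloosterman}, and it is what drives the claim \eqref{eq:RSumClaimedBound}). With your pointwise Cauchy the per-modulus weight attached to each term of $M^*(\beta)$ becomes, after including $A(d_1,d_2,q)\ll d_1q^{1+\varepsilon}$ and the factor $h_2$ from $\mathcal{R}$, of size $\frac{d_1}{q}\cdot\frac{X_1X_2}{h_1}$, which at $h_1=1$ is $\frac{d_1}{q}X_1X_2$; so your route yields at best $\mathcal{S}_{qh_i\le H_i}\ll X_1X_2\,(X_1X_2N)^{\varepsilon}M^*(\alpha)^{1/2}M^*(\beta)^{1/2}$, not $(X_1H_2+X_2H_1)\cdots$. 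This is weaker whenever $H_i<X_i$, and that $H$-dependence is precisely what Theorem \ref{thm:BilinearKloostermanHversion} and the $N^{3/2}$ term of Theorem \ref{thm:spectralsumlocal} require.

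Concretely, your intermediate claim $B_\beta\ll X_1H_2(X_1X_2N)^{\varepsilon}M^*(\beta)$ does not follow from the steps you list: the ledger "the $X_1$ from $\phi(E_1)\le X_1/q$, the $H_2$ from $\sum_{h_2}h_2D^*(\delta)\ll H_2/q$" omits the multiplicity of $E_2$, which is $X_2/(qh_2)$ for each fixed $(q,h_1,h_2,E_1)$; since $h_2\cdot\frac{X_2}{qh_2}=\frac{X_2}{q}\ge\frac{H_2}{q}$, there is no mechanism in your argument capping this contribution at $H_2$. In effect, discarding the phase $e(aE_2v_1/E_1)$ in the $v_1$-Cauchy forfeits all cancellation in the $E_2$-average, and on the coprime part ($h_1=h_2=1$, $q$ small) your bound degenerates to the "open and Cauchy" (Weil-level) estimate that the whole construction of Theorem \ref{thm:BilinearKloosterman} is designed to beat. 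To repair the proof you would need to postpone the $v_1$-expansion, expand $|F(t)|^2$ and average the product $S(E_2a,n,E_1)\overline{S(E_2a,n',E_1)}$ over $E_2$ via completion and \eqref{eq:KloostermanProductCompleteSum} (dropping coprimality by positivity), exactly as in the paper; the remaining steps of your outline (the $(t,h_1)$-gcd bookkeeping, Corollary \ref{coro:Rbound}, Lemma \ref{lemma:divisortypebound}, CRT merging of $E_1$ and $h_1'$ into the modulus $c$) then do align with the paper's argument.
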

Choosing $H_1 = X_1$, $H_2 = X_2$, we have $M^*(\beta) = M(\beta)$ and $M^*(\alpha) = M(\alpha)$, and $\mathcal{S}_{qh_1 > X_1} = \mathcal{S}_{qh_2 > X_2} = 0$, and we obtain Theorem \ref{thm:BilinearKloosterman}.  More generally, combining Lemmas \ref{lemma:LargeH1Bound} and \ref{lemma:SmallH1Bound} proves Theorem \ref{thm:BilinearKloostermanHversion}.

\begin{proof}
We begin by inserting 
the formula
\begin{multline}
 S(\overline{q} \overline{E_1}^2 E_2, \overline{q} \overline{E_2}^2 E_1 m, n, 1,  h_1, h_2) 
\\ 
 = 
 \sum_{t \shortmod{h_1}} \sum_{u \shortmod{h_2}} \widehat{S}(\overline{q} \overline{E_1}^2 E_2, q E_2^2 \overline{E_1} u, t, 1, h_1, h_2) e\Big(\frac{tn}{h_1}\Big) e\Big(\frac{um}{h_2} \Big),
\end{multline}
into \eqref{eq:SboundSeparatedSomewhat}, getting
\begin{multline}
  \mathcal{S}_{qh_i \leq H_i} \leq \sumprime_{h_1, h_2, q, E_1, E_2} \sum_{d_1, d_2 | q} A(d_1, d_2, q) \sum_{t \shortmod{h_1}} \sum_{u \shortmod{h_2}} |\widehat{S}(\overline{q} \overline{E_1}^2 E_2, q E_2^2 \overline{E_1} u, t, 1, h_1, h_2) |  
  \\
  \Big|  
  \sum_{(n,q) = d_1} \beta_n S(E_2 \overline{q} \overline{h_1}^2 h_2, n, E_1) e\Big(\frac{tn}{h_1}\Big)
  \sum_{(m,q) = d_2} \alpha_m 
  S(E_1 \overline{q} \overline{h_2}^2 h_1, m, E_2)  e\Big(\frac{um}{h_2} \Big)
 \Big|.
\end{multline}
We shall occasionally leave the conditions $qh_1 \leq H_1$, $qh_2 \leq H_2$ implicit in the notation.
By Cauchy's inequality, we write 
 $\mathcal{S}_{qh_i \leq H_i} \leq S_1^{1/2} S_2^{1/2}$ where
\begin{multline}
  S_1 = \sumprime_{h_1, h_2, q, E_1, E_2} \sum_{d_1, d_2 | q} A(d_1, d_2, q) \sum_{t \shortmod{h_1}} \sum_{u \shortmod{h_2}} |\widehat{S}(\overline{q} \overline{E_1}^2 E_2, q E_2^2 \overline{E_1} u, t, 1, h_1, h_2) |  
  \\
  \Big|  
  \sum_{(n,q) = d_1} \beta_n S(E_2 \overline{q} \overline{h_1}^2 h_2, n, E_1) e\Big(\frac{tn}{h_1}\Big)
 \Big|^2,
\end{multline}
and $S_2$ is given by a similar formula.  Write this as
\begin{equation}
 S_1 = \sumprime_{q, E_1}  \sum_{d_1, d_2 |q} A(d_1, d_2, q) S_1',
\end{equation}
where
\begin{multline}
  S_1' = \sumprime_{h_1, h_2,  E_2}  \sum_{t \shortmod{h_1}} \sum_{u \shortmod{h_2}} |\widehat{S}(\overline{q} \overline{E_1}^2 E_2, q E_2^2 \overline{E_1} u, t, 1, h_1, h_2) |  
  \\
  \Big|  
  \sum_{(n,q) = d_1} \beta_n S(E_2 \overline{q} \overline{h_1}^2 h_2, n, E_1) e\Big(\frac{tn}{h_1}\Big)
 \Big|^2.
\end{multline}
Recalling the definition of $\mathcal{R}$ from \eqref{eq:Rdefinition}, we have
\begin{equation}
 S_1' \leq \sumprime_{h_1, h_2,  E_2}  \sum_{t \shortmod{h_1}} \mathcal{R}(t,h_1, h_2) 
  \Big|  
  \sum_{(n,q) = d_1} \beta_n S(E_2 \overline{q} \overline{h_1}^2 h_2, n, E_1) e\Big(\frac{tn}{h_1}\Big)
 \Big|^2.
\end{equation}

Using the trick described surrounding \eqref{eq:KloostermanProductCompleteSum}, we extend the sum over $E_2$ 
to a complete sum modulo $E_1 \leq \frac{X_1}{q h_1}$ (forgetting the various coprimality conditions on $E_2$ by positivity), giving
\begin{equation}
 S_1' \leq \sumstar_{x \shortmod{E_1}}  \sumprime_{h_1, h_2}   \sum_{t \shortmod{h_1}} \mathcal{R}(t,h_1, h_2)  \Big(\frac{X_1}{q h_1} + \frac{X_2}{q h_2}\Big) 
  \Big|  
  \sum_{(n,q) = d_1} \beta_n e\Big(\frac{xn}{E_1}\Big) e\Big(\frac{tn}{h_1}\Big)
 \Big|^2.
\end{equation}
Next write $g_1 = (t, h_1)$, and change variables $h_1 = g_1 h_1'$, $t = g_1 t'$, so that $(t', h_1') = 1$.  This gives
\begin{equation}
 S_1' \leq \sumstar_{x \shortmod{E_1}}  \sumprime_{g_1, h_1', h_2}   
 \thinspace
 \sumstar_{t' \shortmod{h_1'}} \mathcal{R}(t' g_1 ,g_1 h_1', h_2)  \Big(\frac{X_1}{q g_1 h_1'} + \frac{X_2}{q h_2}\Big) 
  \Big|  
  \sum_{(n,q) = d_1} \beta_n e\Big(\frac{xn}{E_1}\Big) e\Big(\frac{t'n}{h_1'}\Big)
 \Big|^2.
\end{equation}
Here the primes on the sums refer to the conditions \eqref{eq:primesumconditions2} with $h_1$ replaced by $g_1 h_1'$.
Observe $\mathcal{R}(t' g_1, h_1' g_1, h_2) = \mathcal{R}(g_1, h_1' g_1, h_2)$, by definition, and
re-arrange this in the form
\begin{multline}
\label{eq:S1'bound}
 S_1' \leq \sumstar_{x \shortmod{E_1}}
 \sumprime_{h_1' \leq \frac{H_1}{q}} \thinspace \sumstar_{t' \shortmod{h_1'}} \Big|  
  \sum_{(n,q) = d_1} \beta_n e\Big(\frac{xn}{E_1}\Big) e\Big(\frac{t'n}{h_1'}\Big)
 \Big|^2
 \\
 \sumprime_{g_1 \leq \frac{H_1}{ qh_1' } } \sumprime_{\substack{h_2 \leq \frac{H_2}{q} } }    \mathcal{R}(g_1, g_1 h_1', h_2)  \Big(\frac{X_1}{q g_1 h_1'} + \frac{X_2}{q h_2}\Big).
\end{multline}

We claim
\begin{equation}
\label{eq:RSumClaimedBound}
 \sumprime_{g_1 \leq \frac{H_1}{ qh_1' } } \sumprime_{\substack{h_2 \leq \frac{H_2}{q} } }     \mathcal{R}(g_1, g_1 h_1', h_2)  \Big(\frac{X_1}{q g_1 h_1'} + \frac{X_2}{q h_2}\Big) \ll \frac{(X_1 X_2)^{\varepsilon}}{q^2 h_1'} (X_1 H_2 + X_2 H_1).
\end{equation}

\begin{proof}[Proof of claim]
We first bound the sum with the factor $\frac{X_1}{q g_1 h_1'}$. 
By Corollary \ref{coro:Rbound}, we have
\begin{equation}
 \sumprime_{g_1 \leq \frac{H_1}{ q h_1' } } \sumprime_{\substack{h_2 \leq \frac{H_2}{q} } }    \mathcal{R}(g_1, g_1 h_1', h_2)  \frac{X_1}{q g_1 h_1'} 
 \ll (X_1 X_2)^{\varepsilon}
 \sumprime_{g_1 \leq \frac{H_1}{q h_1' } } \sumprime_{\substack{h_2 \leq \frac{H_2}{q} } }    h_2  \frac{X_1}{q g_1 h_1'} \sum_{\substack{d | g_1 \\ d^3 | (g_1 h_1', h_2)^2}} d.
\end{equation}
Reversing the order of summation, and estimating the sum over $h_2$ by Lemma \ref{lemma:divisortypebound} (one may safely drop the condition $d^3 | h_2^2$ when summing over $h_2$),  this is
\begin{equation}
\label{eq:claimboundwithX1}
 \ll \frac{(X_1  X_2)^{\varepsilon} X_1 H_2}{q^2 h_1'} 
 \sum_{d \leq X_1} d
 \sum_{\substack{g_1 \leq \frac{H_1}{ h_1'} \\ d | g_1, \thinspace  d^3 | (g_1 h_1')^2}}  \frac{1}{g_1}.  
\end{equation}
Next we write $g_1 = dr$, where now $d | r^2 h_1'^2$, so we may execute the sum over $d$ first as a divisor sum, and finally the sum over $r$ satisfies $\sum_{r \leq X_1} r^{-1+\varepsilon} \ll X_1^{\varepsilon}$.  This immediately gives a bound consistent with \eqref{eq:RSumClaimedBound}.

For the second sum with $\frac{X_2}{q h_2}$, we have by Corollary \ref{coro:Rbound} that 
\begin{equation}
\label{eq:SomeEquationInProofOfClaim}
 \sumprime_{g_1 \leq \frac{H_1}{q h_1' } } \sumprime_{\substack{h_2 \leq \frac{H_2}{q} } }    \mathcal{R}(g_1, g_1 h_1', h_2) \frac{X_2}{q h_2} 
 \ll 
(X_1 X_2)^{\varepsilon} \sumprime_{g_1 \leq \frac{H_1}{q h_1' } } \sumprime_{\substack{h_2 \leq \frac{H_2}{q} } } \frac{X_2}{q} \sum_{\substack{d | g_1 \\ d^3 | (g_1 h_1', h_2)^2}} d.
\end{equation}
We shall reverse the order of summation and execute the sum over $h_2$ first.  
The sum over $h_2$ is bounded by $O(X_2^{\varepsilon})$ using Lemma \ref{lemma:divisortypebound}, because 
one of the summation conditions is that $h_2$ and $g_1 h_1'$ share the same prime factors.  
Then the right hand side of \eqref{eq:SomeEquationInProofOfClaim} is
\begin{equation}
 \ll (X_1 X_2)^{\varepsilon} \frac{X_2}{q} \sum_{g_1 \leq \frac{H_1}{q h_1' } } \sum_{\substack{d|g_1\\ d^3 | (g_1 h_1')^2}} d.
\end{equation}
Write $g_1 = dr$, whence this is
\begin{equation}
 \ll (X_1 X_2)^{\varepsilon} \frac{X_2}{q} \sum_{d \leq X_1} d \sum_{\substack{r \leq \frac{H_1}{d qh_1'} \\ r^2 h_1'^2 \equiv 0 \shortmod{d}}} 1.
\end{equation}
Suppose that $d = d' f$ where $d'$ consists of the prime powers corresponding to the primes that divide $h_1'$, so that $f := d/d'$ is then coprime to $h_1'$.  Then we have $r^2 \equiv 0 \pmod{f}$.  Let $f^*$ be the integer such that the congruence $r^2 \equiv 0 \pmod{f}$ is equivalent to $r \equiv 0 \pmod{f^*}$.  Then the above expression is bounded by
\begin{equation}
 \ll  (X_1 X_2)^{\varepsilon} \frac{X_2 H_1}{q^2 h_1'} \sum_{d'} \sum_{f} \frac{1}{f^*} \ll (X_1 X_2)^{\varepsilon} \frac{X_2 H_1}{q^2 h_1'},
\end{equation}
since Lemma \ref{lemma:divisortypebound} shows the sum over $d'$ is $\ll (X_1 X_2)^{\varepsilon}$, and the sum over $f$ is $\ll (X_1 X_2)^{\varepsilon}$ by elementary reasoning (e.g. Rankin's trick).
Thus we arrive at a bound consistent with \eqref{eq:RSumClaimedBound}.
\end{proof}

The claim \eqref{eq:RSumClaimedBound} applied to \eqref{eq:S1'bound} implies
\begin{equation}
 S_1' \ll \frac{(X_1 X_2)^{\varepsilon}}{q^2} (X_1 H_2 + X_2 H_1)  \sumstar_{x \shortmod{E_1}}  \sumprime_{h_1' \leq H_1}  \frac{1}{h_1'} \thinspace \sumstar_{t' \shortmod{h_1'}} 
  \Big|  
  \sum_{(n,q) = d_1} \beta_n e\Big(\frac{xn}{E_1}\Big) e\Big(\frac{t'n}{h_1'}\Big)
 \Big|^2.
\end{equation}
Inserting this into $S_1$, and using the Chinese remainder theorem to combine 
the sums modulo $E_1$ and $h_1'$ to the single modulus $E_1 h_1'$, we derive
\begin{multline}
 S_1 \ll (X_1 H_2 + X_2 H_1) (X_1 X_2)^{\varepsilon} \sum_{q \leq \min(H_1, H_2)} \sum_{d_1, d_2 | q} \frac{A(d_1, d_2, q)}{q^2} 
 \\
 \sum_{\substack{E_1 h_1' \leq \frac{X_1}{q} \\ h_1' \leq H_1}} \frac{1}{h_1'} 
 \thinspace
 \sumstar_{x \shortmod{E_1 h_1'}} \Big|  
  \sum_{(n,q) = d_1} \beta_n e\Big(\frac{xn}{E_1 h_1'}\Big)
 \Big|^2.
\end{multline}
We group together $E_1 h_1'$ into a single variable $c$, use $h_1'^{-1} \leq 1$ and \eqref{eq:Abound}, obtaining
\begin{equation}
 S_1 \ll (X_1 H_2 + X_2 H_1) (X_1 X_2)^{\varepsilon} \sum_{q \leq \min(H_1, H_2)} \sum_{d_1, d_2 | q} \frac{(d_1, d_2)^3}{q d_1 d_2} \sum_{\substack{c \leq \frac{X_1}{q} \\ (c,q) = 1}} \thinspace \sumstar_{x \shortmod{c}} 
 \Big|  
  \sum_{(n,q) = d_1} \beta_n e\Big(\frac{xn}{c}\Big)
 \Big|^2.
\end{equation}
Using the crude bound $(d_1 d_2)^{-1}  (d_1, d_2)^3 \leq d_1$, and trivially summing over $d_2$, we obtain
\begin{equation}
\label{eq:S1bound}
 S_1 \ll (X_1 H_2 + X_2 H_1) (X_1 X_2)^{\varepsilon} \sum_{q \leq \min(H_1, H_2)} \sum_{d_1| q} \frac{d_1}{q} \sum_{\substack{c \leq \frac{X_1}{q} \\ (c,q) = 1}} \thinspace  \sumstar_{x \shortmod{c}} 
 \Big|  
  \sum_{(n,q) = d_1} \beta_n e\Big(\frac{xn}{c}\Big)
 \Big|^2,
\end{equation}
which is $(X_1 H_2 + X_2 H_1) (X_1 X_2)^{\varepsilon} M^*(\beta)$
where recall $M(\beta)$ was defined by \eqref{eq:MbetaDef} and $M^*(\beta)$ has the same definition but with $q \leq \min(H_1, H_2)$.

We also need to estimate $S_2$.  It is given by a similar formula to $S_1$, except with $h_1$ and $h_2$ switched, $E_1$ and $E_2$ switched, $\beta_n$ replaced by $\alpha_m$, $d_1$ and $d_2$ switched, and we need to work with $\mathcal{R}'(u, h_1, h_2)$ instead of $\mathcal{R}$ (for which see Lemma \ref{lemma:R'property}).  Therefore, by a symmetry argument, we have $S_2 \ll (X_1 H_2 + X_2 H_1) (X_1 X_2)^{\varepsilon} M^*(\alpha)$.
\end{proof}

\begin{myremark}
 \label{remark:bilinearwithKloostermanTheoremVariant}
 The proof given above works equally well if we replace $S(1,m,n,1,D_1, D_2)$ by $S(1, \epsilon_1 m, \epsilon_2 n, 1, D_1, D_2)$ for $\epsilon_1, \epsilon_2 \in \{ -1, 1 \}$.
\end{myremark}

%

\section{Bounds on $\mathcal{R}$}
\label{section:Rbound}
This section is devoted to the long proof of Lemma \ref{lemma:Rbound}.
The overarching idea of the proof is to evaluate $\widehat{S}(a,u,t,b, p^k, p^l)$ in explicit terms (as much as possible), and to trivially sum over $u$.  Lemma \ref{lemma:ShatReverseModuli} will allow us to focus almost entirely on the case $k < l$.  Except for the cases $k = l \geq 2$, we have evaluated $\widehat{S}$ exactly.  It is a pleasant fact that this is much easier than evaluating the Kloosterman sum itself (compare to Theorem 0.3 of \cite{DabrowskiFisher}).

In the proof of Theorem \ref{thm:BilinearKloosterman} we only needed estimates on $\mathcal{R}(t, p^k, p^l)$ when $k, l \geq 1$ and $\max(k,l) \geq 2$, but since the small values of $k$ and $l$ are easily treated, we shall cover all the cases as stated in Lemma \ref{lemma:Rbound}.

\subsection{The case $k=0$, or $l =0$}
By a direct calculation, and using \eqref{eq:KloostermanFactorizationCoprimeModuli}, we have
\begin{equation}
 \widehat{S}(a,u,t,b, 1, p^l) = e\Big(\frac{\overline{u} b}{p^l} \Big) \delta(p \nmid u),
\end{equation}
and by symmetry (that is, Lemma \ref{lemma:ShatReverseModuli}),
\begin{equation}
 \widehat{S}(a,u,t,b, p^k, 1) = e\Big(\frac{\overline{t} a}{p^k} \Big) \delta(p \nmid t).
\end{equation}
Trivially summing over $u$, we easily derive $\mathcal{R}(t,p^k,p^l) \leq p^l$ in case $k=0$ or $l=0$.

\subsection{The case $k=l=1$}
By Lemma \ref{lemma:Sevaluation}, 
\begin{equation}
\label{eq:KloostermanEvaluationPrimePrimePower}
 S(a,y,x,b,p,p^l) = S(x,0;p) S(y,bp;p^l) + S(a,0;p) S(b,yp;p^l) + (p-1)\delta(l=1).
\end{equation}
Therefore, recalling $(ab,p) = 1$ and $S(a,0;p) = -1 = S(b,0;p)$, we have
\begin{multline}
 \widehat{S}(a,u,t,b,p,p) = \frac{1}{p^{2}} \sum_{x \shortmod{p}} \sum_{y \shortmod{p}} e\Big(\frac{-x t}{p}\Big) e\Big(\frac{-y u}{p}\Big) [
 S(x,0;p) S(y,0;p) + p]
 \\
 = \delta(p\nmid t) \delta(p \nmid u) +  p\delta(p|t) \delta(p | u).
\end{multline}
We immediately deduce $\mathcal{R}(t,p,p) \leq p$.

\subsection{The case $k=1, l \geq 2$}
Using \eqref{eq:KloostermanEvaluationPrimePrimePower} and the fact that 
$S(b,yp;p^l) = 0$ following from Lemma \ref{lemma:KloostermanVanishes}, we derive
\begin{multline}
\label{eq:ShatFormulaPrimePrimePower}
 \widehat{S}(a, u, t, b, p, p^l) = \frac{1}{p^{l+1}} \sum_{x \shortmod{p}} \sum_{y \shortmod{p^l}} S(x,0;p) S(y,bp;p^l) e\Big(\frac{-x t}{p}\Big) e\Big(\frac{-y u}{p^l}\Big)
 \\
 = \delta(p\nmid t) \delta(p \nmid u) e\Big(\frac{\overline{u} bp}{p^l} \Big).
\end{multline}
We conclude
\begin{equation}
 \mathcal{R}(t,p,p^l) \leq \delta(p \nmid t) p^l.
\end{equation}

\subsection{The case $l=1, k \geq 2$}
By \eqref{eq:ShatFormulaPrimePrimePower} and Lemma \ref{lemma:ShatReverseModuli}, we have
\begin{equation}
 \widehat{S}(a,u,t,b,p^k,p) = \delta(p\nmid t) \delta(p \nmid u) e\Big(\frac{\overline{t} ap}{p^k} \Big),
\end{equation}
and so $\mathcal{R}(t,p^k,p) \leq p$.

Remark. For the remaining cases we do not use direct evaluations of $S$ and instead calculate $\widehat{S}$ from the definition.  We have
\begin{multline}
 \widehat{S}(a,u,t,b,p^k,p^l) = 
 \frac{1}{p^{k+l}} \sum_{x \shortmod{p^k}}
 \sum_{y \shortmod{p^l}}
S(a, y, x, b, p^k, p^l) e\Big(\frac{-x t}{p^k}\Big) e\Big(\frac{-y u}{p^l}\Big)
\\
= 
\Big[\frac{1}{p^{k+l}} \sum_{x \shortmod{p^k}}
 \sum_{y \shortmod{p^l}} 
 \mathop{\sum_{\substack{B_1, C_1 \shortmod{p^k} \\ (B_1, C_1, p^k) = 1}} 
 \sum_{\substack{B_2, C_2 \shortmod{p^l} \\ (B_2, C_2, p^l) = 1}}
 }_{\substack{p^k C_2 + B_1 B_2 + p^l C_1 \equiv 0 \shortmod{p^{k+l}}
 \\ Y_1 B_1 + Z_1 C_1 \equiv 1 \shortmod{p^k}
 \\ Y_2 B_2 + Z_2 C_2 \equiv 1 \shortmod{p^l}
 }} e\Big(\frac{a B_1 + x(Y_1 p^l - Z_1 B_2)}{p^k} \Big)
 \\
 e\Big(\frac{y B_2 + b(Y_2 p^k - Z_2 B_1)}{p^l} \Big) e\Big(\frac{-x t}{p^k}\Big) e\Big(\frac{-y u}{p^l}\Big) \Big].
\end{multline}
This simplifies as
\begin{equation}
\label{eq:ShatFormulaGeneral}
\widehat{S}(a,u,t,b,p^k,p^l) =  \mathop{\sum_{\substack{B_1, C_1 \shortmod{p^k} \\ (B_1, C_1, p^k) = 1}} 
 \sum_{\substack{ C_2 \shortmod{p^l} \\ (u, C_2, p^l) = 1}}
 }_{\substack{p^k C_2 + B_1 u + p^l C_1 \equiv 0 \shortmod{p^{k+l}}
 \\ Y_1 p^l - Z_1 u \equiv t \shortmod{p^k}
 \\ Y_1 B_1 + Z_1 C_1 \equiv 1 \shortmod{p^k}
 \\ Y_2 u + Z_2 C_2 \equiv 1 \shortmod{p^l}
 }} e\Big(\frac{a B_1}{p^k} \Big)
 e\Big(\frac{ b(Y_2 p^k - Z_2 B_1)}{p^l} \Big).
\end{equation}
Although a large expression, we found it helpful to have all the conditions written in the summation sign.

\subsection{The case $l > k \geq 2$}
Suppose that $p^{\nu} || t$, and write $t = p^{\nu} t'$.  
Here we will show
\begin{equation}
\label{eq:ShatPrimePowerPrimePowerEvaluation}
 \widehat{S}(a,u,t,b,p^k,p^l) = p^{\nu} e\Big(\frac{b\overline{u'}}{p^{l-k+\nu}} \Big) e\Big(\frac{a\overline{t'} p^{l-k}}{p^{\nu}} \Big) S(a, b\overline{t'u'}, p^{\nu}) \delta(\nu \leq k/2),
\end{equation}
where the sum vanishes unless $p^{\nu} || u$, in which case we write  $u = p^{\nu} u'$.

Using only the trivial bound (not even the Weil bound) for the Kloosterman sum, we conclude
\begin{equation}
 \mathcal{R}(t,p^k,p^l) \leq p^{\nu} \sum_{u \shortmod{p^l}, p^{\nu} |u} p^{\nu} = p^{\nu + l},
\end{equation}
and in addition we have $\nu \leq k/2$.  This estimate is consistent with Lemma \ref{lemma:Rbound}.

The congruence $p^k C_2 + B_1 u + p^l C_1 \equiv 0 \pmod{p^{k+l}}$ implies $p^k | B_1 u$, and is equivalent to 
\begin{equation}
\label{eq:C2congruence}
 C_2 \equiv -\frac{B_1 u}{p^k} - p^{l-k} C_1 \pmod{p^l}.
\end{equation}
Suppose $p^{k_1} || B_1$, and write $\widehat{S} = \sum_{k_1=0}^{k} V_{k_1}$ correspondingly.  We first evaluate the terms with $1 \leq k_1 \leq k-1$.  We write $B_1 = p^{k_1} R_1$ where $R_1$ runs mod $p^{k_2}$, with $k_1 + k_2 = k$.  We also have $p^{k_2} | u$.  Since $p | B_1$ and $p | u$, the coprimality conditions now require $p \nmid C_1$ and $p \nmid C_2$.  If $p^{k_2+1} | u$ then \eqref{eq:C2congruence} would imply $p | C_2$, a contradiction.  So $p^{k_2} || u$, and we write $u = p^{k_2} u'$.  We set $Y_1 = Y_2 = 0$, and $Z_i = \overline{C_i}$.  Then we have
\begin{equation}
 V_{k_1} = \mathop{ \sumstar_{R_1 \shortmod{p^{k_2}}} \thinspace \sumstar_{C_1 \shortmod{p^k}} \thinspace
 \sumstar_{ C_2 \shortmod{p^l}}
 }_{\substack{C_2 \equiv -R_1 u' - p^{l-k} C_1 \pmod{p^l}
 \\ - \overline{C_1} u' p^{k_2} \equiv t \shortmod{p^k}
 }} e\Big(\frac{a R_1}{p^{k_2}} \Big)
 e\Big(\frac{ -b \overline{C_2} R_1}{p^{l-k_1}} \Big).
\end{equation}
Next we observe that $p^{k_2} || t$, so we write $t = p^{k_2} t'$, and then we have $C_1 \equiv - u' \overline{t'} \pmod{p^{k_1}}$.  
With these evaluations, we have
\begin{equation}
 V_{k_1} = \sumstar_{R_1 \shortmod{p^{k_2}}} \sumstar_{\substack{C_1 \shortmod{p^k} \\ C_1 \equiv - u' \overline{t'} \shortmod{p^{k_1}}}}
 e\Big(\frac{a R_1}{p^{k_2}} \Big)
 e\Big(\frac{ b R_1\overline{(R_1 u' + p^{l-k} C_1)} }{p^{l-k_1}} \Big).
\end{equation}
To help simplify this expression, we expand as follows:
\begin{equation}
 e\Big(\frac{ b R_1\overline{(R_1 u' + p^{l-k} C_1)} }{p^{l-k_1}} \Big) = 
 e\Big(\frac{ b \overline{u'} (R_1 u' + p^{l-k} C_1 - p^{l-k} C_1) \overline{(R_1 u' + p^{l-k} C_1)} }{p^{l-k_1}} \Big).
\end{equation}
After simplification, this gives
\begin{equation}
V_{k_1} = e\Big(\frac{b\overline{u'}}{p^{l-k_1}} \Big) \sumstar_{R_1 \shortmod{p^{k_2}}} \sumstar_{\substack{C_1 \shortmod{p^k} \\ C_1 \equiv - u' \overline{t'} \shortmod{p^{k_1}}}}
 e\Big(\frac{a R_1}{p^{k_2}} \Big)  e\Big(\frac{- b \overline{u'}  C_1 \overline{(R_1 u' + p^{l-k} C_1)} }{p^{k_2}}\Big).
\end{equation}

We claim that if $1\leq k_1 < k_2 \leq k-1$, then $V_{k_1}=0$.
For this, write $C_1 = f_1 + p^{k_2-1} f_2$ with $f_1$ running mod $p^{k_2 - 1}$ and $f_2$ mod $p^{k_1  + 1}$.  Since $k_1 < k_2$, the congruence $C_1 \equiv - u' \overline{t'} \pmod{p^{k_1}}$ gives no condition on $f_2$.
Then note that since $l-k + k_2 - 1 \geq k_2$, we have
\begin{equation}
 e\Big(\frac{- b \overline{u'}  (f_1 + p^{k_2-1} f_2) \overline{(R_1 u' + p^{l-k} (f_1 + p^{k_2-1} f_2))} }{p^{k_2}}\Big) = e\Big(\frac{- b \overline{u'}  (f_1 + p^{k_2-1} f_2) \overline{(R_1 u' + p^{l-k} f_1)} }{p^{k_2}}\Big),
\end{equation}
and so the sum over $f_2$ will cause $V_{k_2}$ to vanish.  

Now suppose $k_1 \geq k_2$.  Then the congruence on $C_1$ mod $p^{k_1}$ determines $C_1$ mod $p^{k_2}$, and hence
\begin{equation}
 V_{k_1} = e\Big(\frac{b\overline{u'}}{p^{l-k_1}} \Big) \sumstar_{R_1 \shortmod{p^{k_2}}} \sumstar_{\substack{C_1 \shortmod{p^k} \\ C_1 \equiv - u' \overline{t'} \shortmod{p^{k_1}}}}
 e\Big(\frac{a R_1}{p^{k_2}} \Big)
  e\Big(\frac{ b  \overline{t'} \overline{(R_1 u' + p^{l-k} (-u' \overline{t'}))} }{p^{k_2}}\Big),
\end{equation}
which simplifies as
\begin{equation}
 V_{k_1} = p^{k_2} e\Big(\frac{b\overline{u'}}{p^{l-k_1}} \Big) \sumstar_{R_1 \shortmod{p^{k_2}}}
 e\Big(\frac{a R_1}{p^{k_2}} \Big)
  e\Big(\frac{ b  \overline{t' u'} \overline{(R_1 - p^{l-k}  \overline{t'})} }{p^{k_2}}\Big).
\end{equation}
Changing variables $R_1 \rightarrow R_1 + p^{l-k} \overline{t'}$, we have
\begin{equation}
\label{eq:Vk1evaluation}
 V_{k_1} = p^{k_2} e\Big(\frac{b\overline{u'}}{p^{l-k_1}} \Big) e\Big(\frac{a\overline{t'} p^{l-k}}{p^{k_2}} \Big) S(a, b\overline{t'u'}, p^{k_2}),
\end{equation}
and we recollect that $p^{k_2} || u$, $p^{k_2} || t$, and $1 \leq k_2 \leq k_1 \leq k-1$.  If we define $\nu$ by $p^{\nu} || t$, then we have
\begin{equation}
\label{eq:Vk1SumEvaluation}
 \sum_{k_1=1}^{k-1} V_{k_1} = p^{\nu} e\Big(\frac{b\overline{u'}}{p^{l-k+\nu}} \Big) e\Big(\frac{a\overline{t'} p^{l-k}}{p^{\nu}} \Big) S(a, b\overline{t'u'}, p^{\nu}) \delta(1 \leq \nu \leq k/2).
\end{equation}

So far we have left the cases with $k_1 =0 $ and $k_1 = k$ unevaluated, so we next turn to this.  

We claim $V_0=0$, which takes some calculation.
We have $p \nmid B_1$ so we can set $Y_1 = \overline{B_1}$ and $Z_1 = 0$.  If $p^{k+1} | u$ then \eqref{eq:C2congruence} would mean $p | C_2$, a contradiction.  So $p^{k} || u$ and we write $u = p^{k} u'$.  We must have $p \nmid C_2$ so we set $Y_2 = 0$, $Z_2 = \overline{C_2}$.  With these evaluations, we have $p^k | t$ and 
\begin{equation}
V_0 =
\sumstar_{B_1 \shortmod{p^k}} \sum_{C_1 \shortmod{p^k}} \sum_{\substack{ C_2 \shortmod{p^l} \\ C_2 \equiv - B_1 u' - p^{l-k} C_1 \pmod{p^l}}}
e\Big(\frac{a B_1}{p^k} \Big)
 e\Big(\frac{ - b\overline{C_2} B_1}{p^l} \Big).
\end{equation}
Now we can write $C_1 = f_1 + p^{k-1} f_2$, and 
$C_2 = - B_1 u' - p^{l-k} f_1 - p^{l-1} f_2$, and so
\begin{equation}
 V_0 = \sumstar_{B_1 \shortmod{p^k}} e\Big(\frac{a B_1}{p^k} \Big) \sum_{f_1 \shortmod{p^{k-1}}} \sum_{f_2 \shortmod{p}} 
 e\Big(\frac{ - b\overline{(- B_1 u' - p^{l-k} f_1 - p^{l-1} f_2)} B_1}{p^l} \Big).
\end{equation}
Note $\overline{1+ p^{l-1} f_2} \equiv 1 - p^{l-1} f_2 \pmod{p^l}$ since $l \geq 2$.  This shows that the sum over $f_2$ vanishes, as desired.
%

Finally we evaluate $V_k$.  
Then we have $p^{k} | B_1$ so may set $B_1 = 0$ (that is, we choose the integer $0$ for the coset representative of $0 \pmod{p^k}$), $p \nmid C_1$, $p | C_2$ so $p \nmid u$, and we may set $Y_1 = 0$, $Z_1 = \overline{C_1}$, $Y_2 = \overline{u}$,  $Z_2 = 0$.  Then 
\begin{equation}
 V_k = e\Big(\frac{b p^k \overline{u}}{p^l} \Big) \sumstar_{\substack{C_1 \shortmod{p^k} \\ - \overline{C_1} u \equiv t \shortmod{p^k} }} \sum_{\substack{C_2 \shortmod{p^l} \\ C_2 \equiv - p^{l-k} C_1 \shortmod{p^l}}} 1.
\end{equation}
This means $p \nmid t$ and both $C_1$ and $C_2$ are uniquely determined.  Therefore,
\begin{equation}
\label{eq:VkSumEvaluation}
 V_k = \delta(p \nmid t) \delta(p \nmid u) e\Big(\frac{b p^k \overline{u}}{p^l} \Big),
\end{equation}
which coincidentally agrees with the right hand side of \eqref{eq:Vk1evaluation}, except with $k_2 = \nu = 0$.  Therefore, by adding \eqref{eq:Vk1SumEvaluation} and \eqref{eq:VkSumEvaluation} we obtain \eqref{eq:ShatPrimePowerPrimePowerEvaluation}, as desired.

%

\subsection{The case $k > l \geq 2$}
By \eqref{eq:ShatPrimePowerPrimePowerEvaluation} and Lemma \ref{lemma:ShatReverseModuli}, we have
\begin{equation}
\label{eq:ShatPrimePowerPrimePowerEvaluation2}
 \widehat{S}(a,u,t,b,p^k,p^l) = p^{\nu} e\Big(\frac{a\overline{t'}}{p^{k-l+\nu}} \Big) e\Big(\frac{b\overline{u'} p^{k-l}}{p^{\nu}} \Big) S(b, a\overline{t'u'}, p^{\nu}) \delta(\nu \leq l/2),
\end{equation}
where again $\nu$ is defined by $p^{\nu} || u$ and $p^{\nu} || t$.
Using only the trivial bound for the Kloosterman sum, we derive
\begin{equation}
 \mathcal{R}(t,p^k, p^l) \leq p^{\nu} \sum_{u \shortmod{p^l}, p^\nu | u} p^{\nu} \leq p^{l + \nu},
\end{equation}
and in addition we have $\nu \leq l/2$.  Again, this is consistent with Lemma \ref{lemma:Rbound}.

\subsection{The case $k = l \geq 2$}
The case $k=l$ follows somewhat similar lines to the $k \neq l$ case, but there are some significant differences that require careful scrutiny.  We do not have a clean formula for $\widehat{S}$ analogous to \eqref{eq:ShatPrimePowerPrimePowerEvaluation}.

Performing some mild simplifications in \eqref{eq:ShatFormulaGeneral}, we obtain that $p^k | B_1 u$ and then
\begin{equation}
\widehat{S}(a,u,t,b,p^k,p^k) =  \mathop{\sum_{\substack{B_1, C_1 \shortmod{p^k} \\ (B_1, C_1, p^k) = 1}} 
 \sum_{\substack{ C_2 \shortmod{p^k} \\ (u, C_2, p^k) = 1}}
 }_{\substack{C_1 + C_2 \equiv - \frac{B_1 u}{p^k} \shortmod{p^{k}}
 \\ - Z_1 u \equiv t \shortmod{p^k}
 \\ Y_1 B_1 + Z_1 C_1 \equiv 1 \shortmod{p^k}
 \\ Y_2 u + Z_2 C_2 \equiv 1 \shortmod{p^k}
 }} e\Big(\frac{a B_1}{p^k} \Big)
 e\Big(\frac{ -b Z_2 B_1}{p^k} \Big).
\end{equation}
As before, let $V_{k_1}$ denote the subsum with $p^{k_1} || B_1$.  We have $p^{k_2} | u$, where $k_1 +k_2 = k$, but unlike the case $l > k \geq 2$, we cannot conclude that $p^{k_2} || u$.  We first estimate the cases with $k_1 = 0$ and $k_1 = k$.

We claim $V_0 = 0$. With these terms, we have $p \nmid B_1$, and so we may set $u=0$ (that is, we choose $u=0$ as the coset representative of $0 \pmod{p^k}$).  Then $p \nmid C_2$, and $C_1 \equiv - C_2 \pmod{p^k}$.  We also have $t=0$.  We may set $Y_2 = 0, Z_2 = \overline{C_2}$, and $Y_1 = \overline{B_1}$, $Z_1 = 0$.  With these evaluations, we derive
\begin{equation}
V_0 =  \sumstar_{B_1 \shortmod{p^k}} \thinspace \sumstar_{C_2 \shortmod{p^k}} 
e\Big(\frac{a B_1}{p^k} \Big)
 e\Big(\frac{ -b \overline{C_2} B_1}{p^k} \Big).
\end{equation}
The sum over $C_2$ vanishes since it is a Ramanujan sum with modulus $p^k$, $k \geq 2$ (see Lemma \ref{lemma:KloostermanVanishes}).

For $V_k$, we have $B_1 = 0$.  Then $p \nmid C_1$ and we set $Y_1 = 0$, $Z_1 = \overline{C_1}$.  Then \eqref{eq:C2congruence} becomes $C_2 \equiv - C_1 \pmod{p^k}$, so we have $p \nmid C_2$ and we are free to set $Y_2 =0$, $Z_2 = \overline{C_2}$.  Thus
\begin{equation}
V_k =  \sumstar_{\substack{C_1 \shortmod{p^k}  \\ - \overline{C_1} u \equiv t \shortmod{p^k}}} 
1.
\end{equation}
Since $u$ is uniquely determined from $C_1$, we have
\begin{equation}
\label{eq:Vkbound}
 \sum_{u \shortmod{p^k}} |V_k| = \phi(p^k). 
\end{equation}

Now consider $V_{k_1}$ with $1 \leq k_1 \leq k-1$.  Then $p | B_1$ and $p | u$ so $p \nmid C_1$, $p \nmid C_2$, and we set $Y_1 = Y_2 = 0$, $Z_i = \overline{C_i}$.  We write $B_1 = p^{k_1} R_1$.  
Suppose $\nu_p(t) =  \nu$, and write $t = p^{\nu} t''$.  Then we must have $\nu \geq k_2$, from the congruence $- \overline{C_1} u \equiv t \pmod{p^k}$,  and we can write $u = p^{\nu} u''$ instead of $u = p^{k_2} u'$.  
Then
\begin{equation}
\label{eq:Vk1formulaEqualprimepowers}
V_{k_1} = 
\sumstar_{R_1 \shortmod{p^{k_2}}}  
 \mathop{\sumstar_{\substack{C_1 \shortmod{p^k}}} 
 }_{\substack{ C_1 \equiv - u'' \overline{t''} \shortmod{p^{k-\nu}}
 }} e\Big(\frac{a R_1}{p^{k_2}} \Big)
 e\Big(\frac{ b R_1 \overline{(C_1 + R_1 p^{\nu - k_2} u'')}}{p^{k_2}} \Big).
\end{equation}

We claim that $V_{k_1} = 0$ if $k_1 < \nu < k$, as we now argue.
Observing that $k-\nu < k_2$ (since $k- \nu = k_1 + k_2 - \nu$), we can write 
$C_1 = -u'' \overline{t''} + p^{k-\nu} f_1$, with $f_1$ running mod $p^{\nu}$.  Then we can write $f_1 = f_2 + f_3 p^{\nu - k_1 -1}$ where $f_2$ runs mods $p^{\nu - k_1 - 1}$, and $f_3$ runs mod $p^{k_1 + 1}$.  Then we arrive at a sum over $f_3$ of the form
\begin{equation}
\sum_{f_3 \shortmod{p^{k_1 + 1}}} e\Big(\frac{\alpha \overline{(1 + \beta p^{k_2-1} f_3)}}{p^{k_2}} \Big) = \sum_{f_3 \shortmod{p^{k_1 + 1}}} e\Big(\frac{\alpha (1 - \beta p^{k_2-1} f_3)}{p^{k_2}} \Big),
\end{equation}
where $(\alpha \beta, p) = 1$, using $k_2 \geq 2$ which follows from $k_2 > k- \nu \geq 1$.  Since this sum over $f_3$ vanishes, this means $V_{k_1} = 0$.

If $\nu = k$, we have from \eqref{eq:Vk1formulaEqualprimepowers}, after changing variables $C_1 \rightarrow C_1 - R_1 p^{\nu - k_2} u''$, that
\begin{equation}
V_{k_1} = 
\sumstar_{R_1 \shortmod{p^{k_2}}}  \thinspace
 \sumstar_{\substack{C_1 \shortmod{p^k}}} 
  e\Big(\frac{a R_1}{p^{k_2}} \Big)
 e\Big(\frac{ b R_1 \overline{C_1}}{p^{k_2}} \Big) = p^{k_1} S(1,0;p^{k_2})^2 = 
 \begin{cases}
    0, \quad k_2 \geq 2, \\
    p^{k_1}, k_2 = 1.
 \end{cases}
\end{equation}
For $\nu =k$, we have
\begin{equation}
\label{eq:Vk1Boundsummed1}
\sum_{k_1=1}^{k-1} \sum_{u \shortmod{p^k}} |V_{k_1}| = p^{k-1} \delta(\nu =k).
\end{equation}

Now suppose $\nu \leq k_1 < k$.  This condition implies $k-\nu \geq k_2$, so the congruence $C_1 \equiv - u'' \overline{t''} \pmod{p^{k-\nu}}$ determines $C_1$ modulo $p^{k_2}$, and so \eqref{eq:Vk1formulaEqualprimepowers} simplifies as
\begin{equation}
\label{eq:Vk1formulaEqualprimepowersSimplified}
V_{k_1} = p^{\nu} 
\sumstar_{R_1 \shortmod{p^{k_2}}}  
  e\Big(\frac{a R_1}{p^{k_2}} \Big)
 e\Big(\frac{ b R_1 \overline{u''} \overline{(- \overline{t''} + R_1 p^{\nu - k_2} )}}{p^{k_2}} \Big).
\end{equation}
If, in addition, $\nu - k_2 \geq k_2$, then this is simply given by
\begin{equation}
V_{k_1} = p^{\nu} S(u'' a - t'' b, 0;p^{k_2}) \delta(k_2 \leq \nu/2).
\end{equation}
It follows easily that for these values of $k_2$ and $\nu$ that
\begin{equation}
\sum_{u \shortmod{p^k}} |V_{k_1}| \leq 2 p^{\nu} p^{k_2} \frac{p^{k-\nu}}{p^{k_2}} = 2p^{k},
\end{equation}
and so,
\begin{equation}
\label{eq:Vk1Boundsummed2}
 \sum_{1 \leq k_2 \leq \nu/2} \sum_{u \shortmod{p^k}} |V_{k_1}| \leq \nu p^k.
\end{equation}

On the other hand, if $\nu - k_2 < k_2$ (we continue to assume $\nu \leq k_1 < k$), then we can write $R_1 = f_1 + p^{2k_2 - \nu} f_2$, with $f_1$ mod $p^{2k_2 - \nu}$, and $f_2$ mod $p^{\nu - k_2}$.  Then \eqref{eq:Vk1formulaEqualprimepowersSimplified} becomes
\begin{equation}
V_{k_1} = p^{\nu} \sumstar_{f_1 \shortmod{p^{2k_2- \nu}}} \sum_{f_2 \shortmod{p^{\nu-k_2}}} 
e\Big(\frac{a (f_1 + p^{2k_2 - \nu} f_2)}{p^{k_2}} \Big)
 e\Big(\frac{ b \overline{u''} (f_1 + p^{2k_2 - \nu} f_2) \overline{(- \overline{t''} + f_1  p^{\nu - k_2} )}}{p^{k_2}} \Big).
\end{equation}
The inner sum over $f_2$ simplifies, and detects $u'' a \equiv b t'' \pmod{p^{\nu-k_2}}$.  Hence
\begin{equation}
 V_{k_1} = p^{2\nu-k_2} \sumstar_{f_1 \shortmod{p^{2k_2- \nu}}}
e\Big(\frac{a f_1}{p^{k_2}} \Big)
 e\Big(\frac{ b \overline{u''} f_1 \overline{(- \overline{t''} + f_1  p^{\nu - k_2} )}}{p^{k_2}} \Big) \delta(u''a \equiv t'' b \pmod{p^{\nu-k_2}}).
\end{equation}

Therefore, by a trivial bound on the $f_1$-sum, we have
\begin{equation}
|V_{k_1}| \leq  p^{2\nu - k_2} \phi(p^{2k_2 - \nu}) \delta(u''a \equiv t'' b \pmod{p^{\nu-k_2}}),
\end{equation}
which implies
\begin{equation}
\sum_{u \shortmod{p^k}} |V_{k_1}| \leq  p^{2\nu - k_2} \phi(p^{2k_2 - \nu}) \frac{p^{k-\nu}}{p^{\nu-k_2}} \leq \phi(p^{2k_2 + k-\nu}).
\end{equation}
Here we certainly have $k_2 \leq \nu$ (from the paragraph following \eqref{eq:Vkbound}), 
but we also have that $\nu$ is restricted by $\nu \leq \min(k-k_2, 2 k_2) \leq 2k/3$.  Therefore, these values of $k_1$ give
\begin{equation}
\label{eq:Vk1Boundsummed3}
 \sum_{k_1} \sum_{u \shortmod{p^k}} |V_{k_1}| \leq p^{k+\nu} \delta(\nu \leq 2k/3).
\end{equation}

Combining \eqref{eq:Vkbound}, \eqref{eq:Vk1Boundsummed1}, \eqref{eq:Vk1Boundsummed2}, and \eqref{eq:Vk1Boundsummed3}, we derive
\begin{equation}
 \mathcal{R}(t,p^k,p^k) \leq (k+1) p^k + p^{k+\nu} \delta(\nu \leq 2k/3).
\end{equation}

%
%

\section{Spectral summation formula}
The remaining sections of the paper contain the proof of Theorems \ref{thm:spectralsumlocal}.

We shall use the $GL_3$ Bruggeman-Kuznetsov formula in the form given by 
Blomer \cite{Blomer}.
Suppose that $T_1, T_2 \gg 1$, and consider the sum
\begin{equation}
\label{eq:spectralsumplain}
\sum_{\substack{\nu_1 = iT_1 + O(1) \\ \nu_2 = iT_2 + O(1)}} \frac{1}{R_j} \Big| \sum_{N/2 < n \leq N} a_n \lambda_j(n,1)
\Big|^2.
\end{equation}
As in \cite[(8.5)]{Blomer}, let
\begin{equation}
\label{eq:Fdef}
F(y_1, y_2) = \sqrt{T_1 T_2 (T_1 + T_2)} y_1^{i (\tau_1 + 2 \tau_2)}
y_2^{i(2 \tau_1 + \tau_2)} f(y_1) f(y_2),
\end{equation}
where $f$ is a fixed smooth, nonzero, non-negative function with support on
$[1,2]$.  Here $\tau_1, \tau_2$ are parameters satisfying $\tau_1 \asymp T_1$,
$\tau_2 \asymp T_2$.

By following the proof of \cite[Theorem 3]{Blomer}, we have that
\begin{equation}
\eqref{eq:spectralsumplain} \ll \sum_j \frac{1}{\| \phi_j \|^2} |\langle
\widetilde{W}_{\nu_1, \nu_2}, F \rangle|^2 \Big| \sum_{n \leq N} a_n
\lambda_j(n,1) \Big|^2 + (\text{cts}) =: S(T_1, T_2, N),
\end{equation}
where $\widetilde{W}_{\nu_1, \nu_2}$ is a completed Whittaker function associated to the Maass form $\phi_j$,
and (cts)
represents the non-negative continuous spectrum contribution.  

The Bruggeman-Kuznetsov formula in the form of \cite[Proposition 4]{Blomer} says
\begin{equation}
S(T_1, T_2, N)  = \sum_{n} \sum_m \overline{a_n} a_m \Big(\Sigma_1 + \Sigma_{2a}
+ \Sigma_{2b} + \Sigma_3 \Big),
\end{equation}
where
\begin{equation}
 \Sigma_1 = \delta_{m=n} \| F \|^2,
\end{equation}
and $\Sigma_{2a}, \Sigma_{2b}$, and $\Sigma_{3}$ are sums involving $GL_3$
Kloosterman sums.  With $F$ defined by \eqref{eq:Fdef}, then $\|F\|^2 \asymp T_1
T_2 (T_1 + T_2)$, which is the mass of the spectral ball to account for the
diagonal terms.  It turns out that we do not need to analyze $\Sigma_{2a}$ and
$\Sigma_{2b}$, so we omit their definitions (the interested reader may find them defined in \cite[(8.2)]{Blomer}).

Here
\begin{equation}
 \Sigma_{3} = \sum_{\epsilon_1, \epsilon_2 = \pm 1} \sum_{\substack{D_1, D_2 }}
\frac{S(1, \epsilon_1 m, \epsilon_2 n, 1, D_1, D_2)}{D_1 D_2}
\mathcal{J}_{\epsilon_1, \epsilon_2}  \Big(\frac{\sqrt{mD_1}}{D_2},
\frac{\sqrt{nD_2}}{D_1} \Big),
\end{equation}
where with shorthand $x_3' = x_1 x_2 - x_3$,
\begin{multline}
 \mathcal{J}_{\epsilon_1, \epsilon_2}(A_1, A_2) =
 (A_1 A_2)^{-2} \int_0^{\infty} \int_0^{\infty} \intR \intR \intR e(-\epsilon_1
A_1 x_1 y_1 - \epsilon_2 A_2 x_2 y_2)
 \\
 e\Big(-\frac{A_2}{y_2} \frac{x_1 x_3 + x_2}{1 + x_2^2 + x_3^2}\Big)
 e\Big(-\frac{A_1}{y_1} \frac{x_2 x_3' + x_1}{1 + x_1^2 + x_3'^2}\Big)
 \\
 F\Big(\frac{A_2}{y_2} \frac{\sqrt{1 + x_1^2 + x_3'^2}}{1+x_2^2 + x_3^2},
\frac{A_1}{y_1} \frac{\sqrt{1 + x_2^2 + x_3^2}}{1+x_1^2 + x_3'^2}\Big)
 \overline{F}(A_1 y_1, A_2 y_2) dx_1 dx_2 dx_3 \frac{dy_1 dy_2}{y_1 y_2}.
\end{multline}
One pleasant feature of this integral expression is that the variables are practically separated, and the kernel function is easily bounded uniformly in all parameters.  For comparison, the formulas of Buttcane \cite[Theorem 2]{Buttcane2} also directly separate the variables and only require a $2$-fold integral, which should in principle be more efficient.  However the tradeoff is that the kernel function is not as easy to bound, requiring one to work on multiple scales.  Furthermore, the weight function depends on $\epsilon_1, \epsilon_2$ in a non-trivial way, leading to further case analysis.

Blomer (see \cite[p.722]{Blomer}) showed that
\begin{equation}
 \sum_{m,n} a_m \overline{a_n} (|\Sigma_{2a}| + |\Sigma_{2b}|) \ll
N^{\varepsilon} (T_1 + T_2)^{-100} \sum_n |a_n|^2,
\end{equation}
which means these terms are practically negligible.  This estimate arises
because the weight function on the sum of Kloosterman sums side is very small
for these terms.
Taken together, this shows
\begin{mylemma}
 We have
 \begin{multline}
  \sum_{\substack{\nu_1 = iT_1 + O(1) \\ \nu_2 = iT_2 + O(1)}} \frac{1}{R_j} \Big| \sum_{N/2 < n \leq N} a_n \lambda_j(n,1)
\Big|^2 \ll T_1 T_2 (T_1 + T_2) \sum_n |a_n|^2
+ \sum_{m,n} a_m \overline{a_n} \Sigma_{3}
\\
 + \frac{N^{\varepsilon}}{(T_1 +
T_2)^{100}} \sum_{n} |a_n|^2.
 \end{multline}
\end{mylemma}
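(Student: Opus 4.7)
The plan is to assemble the statement from the ingredients already set up in this section, since each term on the right-hand side corresponds to exactly one of the four pieces $\Sigma_1, \Sigma_{2a}, \Sigma_{2b}, \Sigma_3$ produced by Blomer's $GL_3$ Bruggeman-Kuznetsov formula. First I would recall the reduction already recorded: after selecting the test function $F$ in \eqref{eq:Fdef} so that $|\langle \widetilde{W}_{\nu_1,\nu_2}, F\rangle|^2 \gg 1$ for spectral parameters in the box $\nu_i = iT_i + O(1)$, positivity of the continuous contribution lets us bound the spectral sum \eqref{eq:spectralsumplain} by $S(T_1,T_2,N)$. Then I invoke \cite[Proposition~4]{Blomer} to expand
\begin{equation*}
S(T_1,T_2,N) = \sum_{m,n} \overline{a_n} a_m \bigl(\Sigma_1 + \Sigma_{2a} + \Sigma_{2b} + \Sigma_3\bigr).
\end{equation*}

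Next I would dispose of $\Sigma_1$. Since $\Sigma_1 = \delta_{m=n} \|F\|^2$, the diagonal contribution equals $\|F\|^2 \sum_n |a_n|^2$. From the explicit form \eqref{eq:Fdef} together with the choice of $f$ supported on $[1,2]$, one computes $\|F\|^2 = T_1 T_2(T_1+T_2) \|f\|_2^4$, giving the first term on the right-hand side of the claimed bound with an absolute implied constant.

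For $\Sigma_{2a}$ and $\Sigma_{2b}$ I would simply cite Blomer's estimate (see \cite[p.~722]{Blomer}), which yields
\begin{equation*}
\sum_{m,n} a_m \overline{a_n} (|\Sigma_{2a}| + |\Sigma_{2b}|) \ll N^{\varepsilon}(T_1+T_2)^{-100} \sum_n |a_n|^2;
\end{equation*}
this estimate ultimately comes from the rapid decay of the associated integral transform, a consequence of repeated integration by parts against the oscillatory factors coming from $F$ on a region where no stationary phase occurs. The $\Sigma_3$ term is kept intact as the second summand on the right-hand side, since it is exactly the sum that will be analyzed in subsequent sections via Theorem~\ref{thm:BilinearKloosterman}. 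Combining the three contributions produces the displayed bound.

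The only real content is the negligibility of $\Sigma_{2a}+\Sigma_{2b}$, and since this is imported directly from \cite{Blomer} there is no substantial obstacle in the present proof: the lemma is essentially a convenient repackaging of Blomer's formula that isolates the diagonal and the long-element Kloosterman contribution $\Sigma_3$, which is where all the analytic work of the paper will take place.
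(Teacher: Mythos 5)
Your proposal is correct and follows essentially the same route as the paper: bound the spectral sum by $S(T_1,T_2,N)$ using the test function $F$ and positivity of the continuous spectrum, expand via \cite[Proposition 4]{Blomer}, use $\Sigma_1 = \delta_{m=n}\|F\|^2$ with $\|F\|^2 \asymp T_1T_2(T_1+T_2)$, import Blomer's bound for $\Sigma_{2a},\Sigma_{2b}$, and keep $\Sigma_3$ intact. (The only cosmetic difference is your exact evaluation of $\|F\|^2$, where the paper only asserts $\asymp$, which is all that is needed.)
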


\section{Manipulations of $\Sigma_3$}
Our next step is to perform some elementary manipulations to $\Sigma_3$ in order to prepare it for the use of Theorem \ref{thm:BilinearKloosterman}.
Note that we can change variables $y_1 \rightarrow y_1/A_1$ and $y_2 \rightarrow
y_2/A_2$ and use the definitions $\xi_1 = 1 + x_1^2 + x_3'^2$, $\xi_2 = 1 +
x_2^2 + x_3^2$, to give
\begin{multline}
 \mathcal{J}_{\epsilon_1, \epsilon_2}(A_1, A_2) =
 (A_1 A_2)^{-2} \int_0^{\infty} \int_0^{\infty} \intR \intR \intR e(-\epsilon_1
x_1 y_1 - \epsilon_2  x_2 y_2)
 \\
 e\Big(-\frac{A_2^2}{y_2} \frac{x_1 x_3 + x_2}{1 + x_2^2 + x_3^2}\Big)
 e\Big(-\frac{A_1^2}{y_1} \frac{x_2 x_3' + x_1}{1 + x_1^2 + x_3'^2}\Big)
 F\Big(\frac{A_2^2}{y_2} \frac{\xi_1^{1/2}}{\xi_2}, \frac{A_1^2}{y_1}
\frac{\xi_2^{1/2}}{\xi_1}\Big)
 \overline{F}( y_1, y_2) dx_1 dx_2 dx_3 \frac{dy_1 dy_2}{y_1 y_2}.
\end{multline}
Our next step is to insert the definition \eqref{eq:Fdef}, and re-arrange the resulting expression.  For later use, it may be helpful to note
that for our values of $A_1$ and $A_2$ that
\begin{equation}
 (A_2^2)^{i(\tau_1 + 2\tau_2)}
 (A_1^2)^{i(2 \tau_1 + \tau_2)} = m^{i(2\tau_1 + \tau_2)} n^{i (\tau_1 + 2
\tau_2)} D_1^{-3i \tau_2} D_2^{-3i \tau_1}.
\end{equation}
In this way, we obtain
\begin{multline}
\label{eq:sigma3intermediateexpression}
 \sum_{n,m} \overline{a_n} a_m \Sigma_3 =
 \sum_{\epsilon_1, \epsilon_2 = \pm 1}
 \intR \intR \intR \xi_1^{-\frac{3}{2} i \tau_1 } \xi_2^{-\frac32 i \tau_2} dx_1
dx_2 dx_3
 \\
 \sum_{\substack{D_1, D_2 }}
 \sum_{n,m} \frac{T_1 T_2 (T_1 +T_2)}{\frac{mn}{D_1 D_2}} n^{i(\tau_1 + 2 \tau_2)} m^{i(2\tau_1 + \tau_2)} \overline{a_n} a_m
 \frac{S(1, \epsilon_1 m, \epsilon_2 n, 1, D_1, D_2)}{D_1^{1 + 3i
\tau_2} D_2^{1 + 3i\tau_1}}
  \\
\Big[ \int_0^{\infty} y_2^{-3i( \tau_1 + \tau_2)}
 e( - \epsilon_2  x_2 y_2) e\Big(-\frac{n D_2}{y_2 D_1^2} \frac{x_1 x_3 + x_2}{1
+ x_2^2 + x_3^2}\Big)
  f\Big(\frac{n D_2}{y_2 D_1^2} \frac{\xi_1^{1/2}}{\xi_2}\Big)
   f(y_2) \frac{dy_2}{y_2}  \Big]
     \\
\Big[ \int_0^{\infty}
 y_1^{-3i(\tau_1 + \tau_2)}
 e(-\epsilon_1  x_1 y_1)
 e\Big(-\frac{m D_1}{y_1 D_2^2} \frac{x_2 x_3' + x_1}{1 + x_1^2 + x_3'^2}\Big)
f\Big(\frac{m D_1}{y_1 D_2^2} \frac{\xi_2^{1/2}}{\xi_1}\Big)
 f(y_1)  \frac{dy_1}{y_1 } \Big].
\end{multline}

Now let us restrict to $D_1 \asymp X_1$, and $D_2 \asymp X_2$, and sum over these dyadic values of $X_1, X_2$ at the end.  Since $m,n \asymp N$, if we  let $(N/n) a_n = a_n'$, then $a_n \asymp a_n'$.  The support on $f$ constrains the $x$-variables
into a certain region $V$ of $\mr^3$ that has measure $\ll (A_1 A_2)^{2 +
\varepsilon}= (\frac{mn}{D_1 D_2})^{1+\frac{\varepsilon}{2}}$, by \cite[Lemma 4]{Blomer}.  Furthermore, $V$ is independent of $m,n, y_1, y_2, D_1, D_2$ (it depends on $N, X_1, X_2$).
Write $|V| = \frac{N^2}{X_1 X_2}$, so that the measure of $V$ is at most $|V| N^{\varepsilon}$.

Blomer's bound \cite[(8.9)]{Blomer} shows that $\mathcal{J}_{\epsilon_1, \epsilon_2}(A_1, A_2)$ is very small for $A_1^{4/3} A_2^{2/3} \ll (T_1 + T_2)^{1-\varepsilon}$, or $A_1^{2/3} A_2^{4/3} \ll (T_1 + T_2)^{1-\varepsilon}$.  This means that we may assume
\begin{equation}
X_1, X_2 \ll \frac{N}{(T_1 + T_2)^{1-\varepsilon}}.
\end{equation}
Remark.  In \cite[Lemma 9]{BlomerButtcane}, Blomer and Buttcane have shown, using Buttcane's Mellin-Barnes integral representations, that the $X_i$ can be truncated earlier, in the special case $T_1 \asymp T_2 \asymp T$, to $X_i \ll T^{-2+\varepsilon} N$.  

Then we have
\begin{equation}
\label{eq:Sigma3toU}
 \Big| \sum_{n,m} \overline{a_n} a_m \Sigma_3 \Big| \ll 
 \sum_{\epsilon_1, \epsilon_2 = \pm 1}
 \sum_{X_1, X_2 \text{ dyadic}} \frac{N^{\varepsilon} T_1 T_2 (T_1 +T_2)}{X_1 X_2}
 \int_{x_1, x_2, x_3 \in V} \frac{dx_1 dx_2 dx_3}{|V|} |U|,
\end{equation}
where $U = U(\gamma, \epsilon_1, \epsilon_2, X_1, X_2, x_1, x_2, x_3)$ is defined by
\begin{multline}
\label{eq:Udef}
U=
 \sum_{\substack{D_1 \asymp X_1 \\ D_2 \asymp X_2 }}
 \sum_{n,m} 
   b_n'' a_m''
 S(1, \epsilon_1 m, \epsilon_2 n, 1, D_1, D_2) \gamma_{D_1, D_2}
  \\
\Big[ \int_0^{\infty} y_2^{-3i( \tau_1 + \tau_2)}
 e( - \epsilon_2  x_2 y_2) e\Big(-\frac{n D_2}{y_2 D_1^2} \frac{x_1 x_3 + x_2}{1
+ x_2^2 + x_3^2}\Big)
  f\Big(\frac{n D_2}{y_2 D_1^2} \frac{\xi_1^{1/2}}{\xi_2}\Big)
   f(y_2) \frac{dy_2}{y_2}  \Big]
     \\
\Big[ \int_0^{\infty}
 y_1^{-3i(\tau_1 + \tau_2)}
 e(-\epsilon_1  x_1 y_1)
 e\Big(-\frac{m D_1}{y_1 D_2^2} \frac{x_2 x_3' + x_1}{1 + x_1^2 + x_3'^2}\Big)
f\Big(\frac{m D_1}{y_1 D_2^2} \frac{\xi_2^{1/2}}{\xi_1}\Big)
 f(y_1)  \frac{dy_1}{y_1 } \Big],
\end{multline}
for some sequence $\gamma_{D_1, D_2}$ with $|\gamma_{D_1, D_2}| \leq 1$.
Here we have used the shorthand $a_m'' = a_m' m^{i(2 \tau_1 + \tau_2)}$, $b_n'' = \overline{a_n'} n^{i(\tau_1 + 2 \tau_2)}$.

\begin{myprop}
\label{prop:Ubound}
We have
\begin{equation}
\label{eq:Ubound}
 |U| \ll X_1 X_2  \Big(X_1^2 + \frac{N}{T_1 + T_2} \Big)^{1/2} \Big(X_2^2 + \frac{N}{T_1 + T_2}\Big)^{1/2} (N T_1 T_2)^{\varepsilon} \sum_{n \leq N} |a_n|^2.
\end{equation}
The estimate is uniform in terms of $\gamma, X_1, X_2, x_1, x_2, x_3$.
\end{myprop}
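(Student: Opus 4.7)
The plan is to follow the $GL_2$ template of Section \ref{section:HeuristicBilinearKloosterman}: separate the $y_1, y_2$ variables via Mellin inversion so as to convert $U$ into an integral of bilinear Kloosterman forms to which Theorem \ref{thm:BilinearKloosterman} applies, and then bound the resulting archimedean integrals by a hybrid large sieve inequality in the spirit of Lemma \ref{lemma:variantsieve}.

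View the two bracketed inner $y_j$-integrals in \eqref{eq:Udef} as functions $K_j(\alpha_j)$ of $\alpha_1 = mD_1/D_2^2$ and $\alpha_2 = nD_2/D_1^2$, respectively, with the remaining variables $x_1, x_2, x_3$ and $\tau_1, \tau_2, \epsilon_j$ held fixed. A stationary phase analysis, exploiting that $y_j^{-3i(\tau_1+\tau_2)}$ contributes a phase of derivative size $\asymp T_1+T_2$ in $y_j$, produces the pointwise bound $|K_j(\alpha_j)| \ll (T_1+T_2)^{-1/2}$ on the bounded dyadic support in $\alpha_j$. Writing
$$K_j(\alpha_j) = \frac{1}{2\pi} \int_{\mathbb{R}} \widetilde{K}_j(it_j)\, \alpha_j^{it_j}\, dt_j,$$
Mellin--Plancherel then yields
$$\int_{\mathbb{R}} |\widetilde{K}_j(it_j)|^2\, dt_j \ll \frac{1}{T_1+T_2},$$
and repeated integration by parts in $y_j$ shows that $\widetilde{K}_j(it_j)$ is negligible outside the range $|t_j| \leq T^* := (T_1+T_2)^{1+\varepsilon}$.

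Substituting the Mellin expansions into $U$ and absorbing the unimodular $D_1, D_2$-dependent phases arising from $\alpha_j^{it_j}$ into $\gamma_{D_1, D_2}$ (which remains bounded by $1$), we reduce, up to negligible error, to
$$U = \int_{|t_1| \leq T^*} \int_{|t_2| \leq T^*} \widetilde{K}_1(it_1) \widetilde{K}_2(it_2) \, \mathcal{S}(\alpha^{(t_1)}, \beta^{(t_2)}, \gamma^{(t_1, t_2)}) \, dt_1 \, dt_2,$$
with $\alpha_m^{(t_1)} := a_m'' m^{it_1}$ and $\beta_n^{(t_2)} := b_n'' n^{it_2}$. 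Applying Theorem \ref{thm:BilinearKloosterman} (together with Remark \ref{remark:bilinearwithKloostermanTheoremVariant} to accommodate the signs $\epsilon_j = \pm 1$) pointwise in $(t_1, t_2)$, followed by Cauchy--Schwarz on each $t_j$ integral against the weight $|\widetilde{K}_j|$, gives
$$|U| \ll (X_1 X_2)^{1+\varepsilon} \prod_{j=1}^{2} \biggl(\int_{\mathbb{R}} |\widetilde{K}_j|^2 \, dt\biggr)^{1/2} \biggl(\int_{|t| \leq T^*} M_j(t) \, dt\biggr)^{1/2},$$
where $M_1(t) = M(\alpha^{(t)})$ and $M_2(t) = M(\beta^{(t)})$.

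The final step is to invoke the standard Gallagher-type hybrid large sieve
$$\int_{-T^*}^{T^*} \sum_{c \leq C} \sumstar_{x \shortmod c} \biggl|\sum_{n \asymp N} b_n n^{it} e\Bigl(\frac{xn}{c}\Bigr)\biggr|^2 dt \ll (T^* C^2 + N) \|b\|^2,$$
a continuous analogue of Lemma \ref{lemma:variantsieve}. Inserted into the definition \eqref{eq:MbetaDef} with $C = X_1/q$ and $b_n = a_n'' \mathbf{1}_{(n,q)=d_1}$, and then averaged over $q \leq \min(X_1,X_2)$ and $d_1 \mid q$ using $\sum_{d_1 \mid q} d_1 \|\alpha|_{(n,q)=d_1}\|^2 = \sum_n (n,q)|\alpha_n|^2$ together with the elementary divisor-type bounds $\sum_{q \leq X_1} (n,q)/q \ll n^\varepsilon$ and $\sum_{q} (n,q)/q^3 \ll 1$, this yields
$$\int_{|t| \leq T^*} M(\alpha^{(t)}) \, dt \ll (T^* X_1^2 + N) (X_1 X_2 N)^\varepsilon \|a\|^2,$$
and an identical bound for $\beta$. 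Substituting back, the compensating factors $T^* \asymp T_1+T_2$ and $(T_1+T_2)^{-1}$ cancel to give the claimed estimate. The principal technical obstacle is the uniform stationary phase and Mellin analysis of $K_j$, in particular verifying the $L^2$ bound on $\widetilde{K}_j$ and the truncation to $|t_j| \leq T^*$ uniformly over $(x_1, x_2, x_3) \in V$; a secondary delicate point is the arithmetic averaging over $(q, d_1)$ in the large sieve step, so as not to lose an extra factor of $X_1$ in the secondary term $N/(T_1+T_2)$.
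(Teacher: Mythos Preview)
Your overall architecture—separate variables in the $y_j$-integrals, apply Theorem \ref{thm:BilinearKloosterman} pointwise, then close with a hybrid large sieve—matches the paper's, and the numerology you derive is correct in the generic regime. However, the two specific analytic claims that drive your bound are not uniform in $(x_1,x_2,x_3)\in V$, and this is a genuine gap rather than a routine verification.

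Concretely, the phase of the $y_1$-integral is $-3(\tau_1+\tau_2)\log y_1 - 2\pi\epsilon_1 x_1 y_1 - 2\pi\alpha_1 c_1/y_1$, and the term $-2\pi\epsilon_1 x_1 y_1$ is \emph{not} dominated by the logarithmic term in general: the region $V$ contains points with $|x_1|$ of size $T_1+T_2$ or larger (this is precisely why the paper splits into cases at \eqref{eq:smallxicondition}). When $|x_1|\asymp T_1+T_2$ with the appropriate sign, the second derivative of the phase can vanish at the stationary point, so your pointwise bound $|K_1|\ll (T_1+T_2)^{-1/2}$ degrades to $(T_1+T_2)^{-1/3}$, and the resulting $L^2$ bound $\int|\widetilde K_1|^2\ll (T_1+T_2)^{-1}$ is no longer available. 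When $|x_1|\gg T_1+T_2$, integration by parts in $y_1$ shows $\widetilde K_1(it_1)$ is concentrated near $t_1\approx -3(\tau_1+\tau_2)-2\pi\epsilon_1 x_1 y_1$ for some $y_1\in[1,2]$, i.e.\ at $|t_1|\asymp |x_1|$, so your truncation $|t_1|\le (T_1+T_2)^{1+\varepsilon}$ is simply false. You flag this uniformity as ``the principal technical obstacle'' but do not resolve it; as stated, the argument does not give the uniform bound the proposition asserts.

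The paper avoids stationary phase altogether. In the regime $|x_j|\le\delta(T_1+T_2)$ it uses only integration by parts to force $Y_j\ll N/(T_1+T_2)$, then brings the $y_j$-integrals outside and performs the change of variables $y_1\to y_1^{-1}(D_1/X_1)(X_2^2/D_2^2)$ (and similarly for $y_2$), which converts the oscillation into an \emph{additive} twist $e(-my_1/Y_1)$ rather than your multiplicative $m^{it}$. After a short Mellin step to strip the remaining $f$-factors, Theorem \ref{thm:BilinearKloosterman} is applied and the $y_j$-integral is fed directly into Lemma \ref{lemma:variantsieve} with $Y=Y_j\ll N/(T_1+T_2)$, yielding $(X_1^2/q^2 + Y_j/d_1)$ inside $M(\beta)$ with no need for a separate $L^2$ saving on the kernel. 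For $|x_j|\ge\delta(T_1+T_2)$ a different (and simpler) change of variables $y_1\to y_1 m/N$ produces the twist $e(-my_1 x_1/N)$, effectively replacing $Y_1$ by $N/|x_1|\le N/(T_1+T_2)$, and the same endgame applies. This case split is exactly what circumvents the uniformity problem in your Mellin approach.
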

Assuming Proposition \ref{prop:Ubound}, we may quickly show the following variant of Theorem \ref{thm:spectralsumlocal}:
\begin{equation}
\label{eq:mainthmVariant}
  \sum_{\substack{\nu_1 = iT_1 + O(1) \\ \nu_2 = iT_2 + O(1)}} \frac{1}{R_j} \Big| \sum_{ n \leq N} a_n \lambda_j(n,1)
\Big|^2 \ll
\Big(T_1 T_2 (T_1 + T_2) +   T_1 T_2 \frac{N^2}{T_1 + T_2} \Big)^{1+\varepsilon}
\sum_{n \leq N} |a_n|^2,
 \end{equation}
as we now explain.
By inserting \eqref{eq:Ubound} into \eqref{eq:Sigma3toU}, we obtain
\begin{multline}
\label{eq:UBoundAppliedToSigma3}
\Big| \sum_{n,m} \overline{a_n} a_m \Sigma_3 \Big| 
\ll  \sum_{\substack{X_1, X_2 
\ll \frac{N}{(T_1 + T_2)^{1-\varepsilon}} \\ \text{dyadic} }} \frac{ T_1 T_2 (T_1 + T_2) (T_1 T_2 N)^{\varepsilon}}{X_1 X_2}
\\
 X_1 X_2 \Big(X_1^2 + \frac{N}{T_1 + T_2} \Big)^{1/2} \Big(X_2^2 + \frac{N}{T_1 + T_2}\Big)^{1/2} \sum_{n \leq N} |a_n|^2,
\end{multline}
plus a small error term from the truncation on $X_1, X_2$.
By a direct calculation, this gives
\begin{equation}
 \Big|\sum_{n,m} \overline{a_n} a_m \Sigma_3 \Big|
\ll T_1 T_2 (T_1 + T_2) (T_1 T_2 N)^{\varepsilon} \Big(\frac{N}{T_1 + T_2}\Big)^{2}
\sum_n |a_n|^2,
\end{equation}
which proves \eqref{eq:mainthmVariant}.

\begin{proof}[Proof of Proposition \ref{prop:Ubound}]
The main difficulty in the proof is exploiting cancellation in the $y_1, y_2$ integrals.  For point of reference, if we apply Corollary \ref{coro:Sbound}
directly to \eqref{eq:Udef}, trivially integrating over $y_1$ and $y_2$, we obtain
\begin{equation}
\label{eq:UboundTrivial}
 |U| \ll X_1 X_2  (X_1^2 + N )^{1/2} (X_2^2 + N )^{1/2} (N T_1 T_2)^{\varepsilon} \sum_{n \leq N} |a_n|^2.
\end{equation}
One easily observes that if $T_1 + T_2 \ll N^{\varepsilon}$, then \eqref{eq:UboundTrivial} implies \eqref{eq:Ubound}, so for the rest of the proof we assume
\begin{equation}
\label{eq:T1andT2nottoosmall}
 T_1 + T_2 \gg N^{\varepsilon}.
\end{equation}

We will first show the bound \eqref{eq:Ubound} under the assumptions
\begin{equation}
\label{eq:smallxicondition}
|x_1|, |x_2| \leq \delta (T_1 + T_2),
\end{equation}
where $\delta > 0$ is some small but fixed number (certainly $1/1000$ suffices for the proof).  
Let
\begin{equation}
Y_1 = \frac{X_2^2}{X_1} \frac{1 + x_1^2 + x_3'^2}{x_2 x_3' + x_1}, \quad
\text{and} \quad Y_2 = \frac{X_1^2}{X_2} \frac{1 + x_2^2 + x_3^2}{x_1 x_3 +
x_2}.
\end{equation}
With this definition, we have that the $y_1$-integral takes the form
\begin{equation}
 \int h(y_1) e^{i \phi_1(y_1)} dy_1, \quad \phi_1(y_1) = c_1 (T_1 + T_2) \log
y_1 + c_2 x_1 y_1 + c_3 \frac{N}{y_1 Y_1},
\end{equation}
where each $c_i \asymp 1$ and $h$ is a weight function with bounded
derivatives.  Under the assumption \eqref{eq:smallxicondition}, repeated integration by parts (see \cite[Lemma 8.1]{BKY}) shows the integral is smaller than an arbitrarily large negative power of $\max(T_1 + T_2, \frac{N}{Y_1})$ (and hence, using \eqref{eq:T1andT2nottoosmall}, an arbitrarily large power of $T_1T_2 N$), unless $\frac{N}{Y_1} \asymp (T_1 + T_2)$.  If \eqref{eq:smallxicondition} does not hold then there is potentially cancellation between the first two terms in the phase in which case this argument breaks down. 

A similar argument holds for $y_2$ also.  Thus
we may assume
\begin{equation}
\label{eq:Yibound}
Y_1, Y_2 \ll \frac{N}{T_1 + T_2}.
\end{equation}
Moving the integrals to the outside, we derive
\begin{multline}
 |U| \ll \int_1^2 \int_1^2 \Big|
\sum_{\substack{D_1 \asymp X_1 \\ D_2 \asymp X_2 }}
 \sum_{n,m} 
   b_n'' a_m''
 S(1, \epsilon_1 m, \epsilon_2 n, 1, D_1, D_2) \gamma_{D_1, D_2}
  \\
  e\Big(-\frac{n D_2}{y_2 D_1^2} \frac{x_1 x_3 + x_2}{1
+ x_2^2 + x_3^2}\Big)
  f\Big(\frac{n D_2}{y_2 D_1^2} \frac{\xi_1^{1/2}}{\xi_2}\Big) 
 e\Big(-\frac{m D_1}{y_1 D_2^2} \frac{x_2 x_3' + x_1}{1 + x_1^2 + x_3'^2}\Big)
f\Big(\frac{m D_1}{y_1 D_2^2} \frac{\xi_2^{1/2}}{\xi_1}\Big)
 \Big| \frac{dy_1}{y_1} \frac{dy_2}{y_2}.
\end{multline}
 Now we can change variables $y_1 \rightarrow y_1^{-1} \frac{D_1}{X_1} \frac{X_2^2}{D_2^2}$, and $y_2 \rightarrow y_2^{-1} \frac{D_2}{X_2} \frac{X_1^2}{D_1^2}$, giving that 
 \begin{multline}
 \label{eq:UboundMiddleofProof}
 |U| \ll \int_{y_1 \asymp 1} \int_{y_2 \asymp 1} \Big|
\sum_{\substack{D_1 \asymp X_1 \\ D_2 \asymp X_2 }}
 \sum_{n,m} 
   b_n'' a_m''
 S(1, \epsilon_1 m, \epsilon_2 n, 1, D_1, D_2) \gamma_{D_1, D_2}
 \\
 e\Big(-\frac{m y_1}{Y_1}\Big) e\Big(-\frac{n y_2}{Y_2}\Big) 
  f\Big(\frac{n y_2 X_2}{X_1^2} \frac{\xi_1^{1/2}}{\xi_2}\Big) 
f\Big(\frac{m y_1 X_1}{X_2^2} \frac{\xi_2^{1/2}}{\xi_1}\Big)
 \Big| \frac{dy_1}{y_1} \frac{dy_2}{y_2}.
\end{multline}
Now we can apply Mellin inversion to $f(\frac{ n y_2 X_2}{X_1^2} \frac{\xi_1^{1/2}}{\xi_2})$ (and the other $f$), showing now
\begin{multline}
 |U| \ll \intR  \frac{1}{1 + r_1^2} \intR  \frac{1}{1 + r_2^2} \int_{y_1 \asymp 1} \int_{y_2 \asymp 1} 
 \Big|
\sum_{\substack{D_1 \asymp X_1 \\ D_2 \asymp X_2 }}
\gamma_{D_1, D_2}
\\
 \sum_{n,m} 
   b_n'' n^{ir_2} a_m'' m^{ir_1}
 S(1, \epsilon_1 m, \epsilon_2 n, 1, D_1, D_2)  e\Big(-\frac{m y_1}{Y_1}\Big) e\Big(-\frac{n y_2}{Y_2}\Big) 
 \Big| \frac{dy_1}{y_1} \frac{dy_2}{y_2} dr_1 dr_2.
\end{multline}
Remark.  The $r_1$ and $r_2$ integrals are practically harmless because our bound will be in terms of the $L^2$ norms of the sequences $(b_n'')$ and $(a_m'')$, which are then independent of $r_1, r_2$.

At this point we can apply Theorem \ref{thm:BilinearKloosterman} (see also Remark \ref{remark:bilinearwithKloostermanTheoremVariant}), showing
\begin{multline}
\label{eq:UBoundAfterBilinearTheoremApplied}
 |U| \ll (X_1 X_2)^{1+\varepsilon}
 \Big[
 \intR \frac{1}{1 + r_2^2}
 \\
 \int_{y_2 \asymp 1} 
 \sum_{q \leq \min(X_1, X_2)} \sum_{d_1| q} \frac{d_1}{q} \sum_{\substack{c \leq \frac{X_1}{q} \\ (c,q) = 1}} \thinspace \sumstar_{t \shortmod{c}} 
 \Big|  
  \sum_{(n,q) = d_1} b_n'' n^{ir_2} e\Big(\frac{tn}{c}\Big) e\Big(-\frac{ny_2}{Y_2}\Big)
 \Big|^2  dy_2 dr_2 \Big]^{1/2} 
 [\dots ]^{1/2},
\end{multline}
with $[\dots]$ representing a similar term.  
Using the hybrid large sieve (Lemma \ref{lemma:variantsieve}) shows that the first expression in brackets is bounded by
\begin{equation}
 \sum_{q \leq \min(X_1, X_2)} \sum_{d_1 | q} \frac{d_1}{q} \Big(\frac{X_1^2}{q^2} + \frac{Y_2}{d_1} \Big) \sum_{(n,q) = d_1} |\beta_n|^2 \ll (X_1 X_2 )^{\varepsilon} \Big(X_1^2 + \frac{N}{T_1 + T_2} \Big) \sum_{n \leq N} |b_n|^2.
\end{equation}
The second expression in brackets in \eqref{eq:UBoundAfterBilinearTheoremApplied} is bounded in a similar way, which completes the proof under the assumption \eqref{eq:smallxicondition}.

Now we show how to modify the proof in case \eqref{eq:smallxicondition} does not hold.  Say that $|x_1| \geq \delta (T_1 + T_2)$, and $|x_2| \leq \delta (T_1 + T_2)$.  The $y_2$-analysis is unchanged while in
\eqref{eq:Udef}, we change variables $y_1 \rightarrow y_1 \frac{m}{N}$.  Following the calculations above, in place of \eqref{eq:UboundMiddleofProof}, we obtain
\begin{multline}
 \label{eq:UboundMiddleofProof2}
 |U| \ll \int_{y_1 \asymp 1} \int_{y_2 \asymp 1} \Big|
\sum_{\substack{D_1 \asymp X_1 \\ D_2 \asymp X_2 }}
 \sum_{n,m} 
   b_n'' a_m'''
 S(1, \epsilon_1 m, \epsilon_2 n, 1, D_1, D_2) \gamma_{D_1, D_2}
 \\
 e\Big(-\frac{m y_1 x_1}{N}\Big) e\Big(-\frac{n y_2}{Y_2}\Big) 
  f\Big(\frac{n y_2 X_2}{X_1^2} \frac{\xi_1^{1/2}}{\xi_2}\Big) 
f\Big(\frac{m y_1}{N}\Big)
 \Big| \frac{dy_1}{y_1} \frac{dy_2}{y_2},
\end{multline}
where $|a_m'''| = |a_m''|$.  This has the same essential form as \eqref{eq:UboundMiddleofProof} but with $Y_1$ replaced by $\frac{N}{|x_1|} \ll \frac{N}{T_1 + T_2}$.  Thus we arrive at the same bound in this case.  By symmetry, the same bound holds in case $|x_1| \leq \delta (T_1 + T_2)$ and $|x_2| \geq \delta (T_1 + T_2)$.  A simple modification covers the case $|x_1|, |x_2| \geq \delta(T_1 + T_2)$, where we apply the change of variables in both $y_1, y_2$.
\end{proof}

\section{Proof of Theorem \ref{thm:spectralsumlocal}}
If the $X_i$ are large, then we can obtain an improved version of Proposition \ref{prop:Ubound}, namely
\begin{myprop}
\label{prop:UboundWithH}
 We have
 \begin{multline}
  |U| \ll \Big[(X_1 H_2 + X_2 H_1)  \Big(X_1^2 + \frac{N}{T_1 + T_2} \Big)^{1/2} \Big(X_2^2 + \frac{N}{T_1 + T_2} \Big)^{1/2}  
  \\
  + \frac{(X_1 X_2)^{3/2} N}{H_1} + \frac{(X_1 X_2)^{3/2} N}{H_2}
  \Big] (X_1 X_2)^{\varepsilon} \sum_{n \leq N} |a_n|^2.
 \end{multline}
\end{myprop}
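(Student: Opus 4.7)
The plan is to repeat, essentially verbatim, the argument for Proposition \ref{prop:Ubound}, but invoke Theorem \ref{thm:BilinearKloostermanHversion} in place of Theorem \ref{thm:BilinearKloosterman} at the decisive step. First I would carry out the oscillatory integral analysis unchanged: in the main case $|x_1|,|x_2|\le \delta(T_1+T_2)$, repeated integration by parts in the phases $\phi_i(y_i) = c_1(T_1+T_2)\log y_i + c_2 x_i y_i + c_3 N/(y_i Y_i)$ forces $Y_1,Y_2 \ll N/(T_1+T_2)$ up to negligible error, and the same truncation survives under the symmetric changes of variable when either $|x_i|$ is large. Then Mellin inversion in the weight factors $f(\cdot)$ separates variables, producing a bilinear form in the sequences $(b_n'' n^{ir_2})$ and $(a_m'' m^{ir_1})$ tested against Kloosterman sums, with outer oscillatory factors $e(-m y_1/Y_1) e(-n y_2/Y_2)$ and coefficients satisfying $|\gamma_{D_1,D_2}|\le 1$. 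All of this is taken directly from the proof of Proposition \ref{prop:Ubound}.

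At the crucial point where that proof invokes Theorem \ref{thm:BilinearKloosterman}, I would instead invoke Theorem \ref{thm:BilinearKloostermanHversion} with the given parameters $H_1,H_2$ (via Remark \ref{remark:bilinearwithKloostermanTheoremVariant} to absorb the signs $\epsilon_1,\epsilon_2$). This produces two contributions to $|U|$. The ``structured'' contribution $(X_1 H_2 + X_2 H_1)(X_1X_2)^\varepsilon M^*(\alpha)^{1/2} M^*(\beta)^{1/2}$ is handled exactly as before by Lemma \ref{lemma:variantsieve}: the hybrid large sieve bounds the $y_2$-integrated $M^*(\beta)$ by
\begin{equation*}
\sum_{q \leq \min(H_1,H_2)} \sum_{d_1\mid q} \frac{d_1}{q}\Bigl(\frac{X_1^2}{q^2} + \frac{Y_2}{d_1}\Bigr)\sum_{(n,q)=d_1} |\beta_n|^2 \ll (X_1 X_2)^\varepsilon \Bigl(X_1^2 + \frac{N}{T_1+T_2}\Bigr)\|\beta\|^2,
\end{equation*}
and symmetrically for $M^*(\alpha)$; the restriction $q\le\min(H_1,H_2)$ only tightens a logarithmic $q$-sum and does not alter the order of magnitude. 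Combining these two factors yields the first bracketed summand in the target bound.

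For the ``Weil-type'' contribution $(X_1 X_2)^{3/2+\varepsilon} N^{1+\varepsilon}\|\alpha\|\|\beta\|(H_1^{-1}+H_2^{-1})$ arising from the second term of Theorem \ref{thm:BilinearKloostermanHversion}, the estimate is independent of $y_1,y_2,r_1,r_2$, so it passes through the archimedean and Mellin integrations trivially; after Cauchy--Schwarz absorbing $\|\alpha\|\|\beta\|$ into $\sum_{n\le N}|a_n|^2$, it delivers precisely the $\tfrac{(X_1 X_2)^{3/2}N}{H_1} + \tfrac{(X_1 X_2)^{3/2}N}{H_2}$ terms of the asserted bound. The main point requiring verification is simply that Theorem \ref{thm:BilinearKloostermanHversion} applies in the same separated-variable setting: since it is stated for exactly the same sum $\mathcal{S}(\alpha,\beta,\gamma)$ under the same coefficient hypothesis $|\gamma_{D_1,D_2}|\le 1$, the substitution is immediate, and I do not foresee any substantive difficulty beyond routine bookkeeping.
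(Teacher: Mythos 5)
Your proposal is correct and coincides with the paper's own (largely omitted) argument: the paper proves Proposition \ref{prop:UboundWithH} by rerunning the proof of Proposition \ref{prop:Ubound} verbatim and substituting Theorem \ref{thm:BilinearKloostermanHversion} for Theorem \ref{thm:BilinearKloosterman} at \eqref{eq:UBoundAfterBilinearTheoremApplied}, exactly as you describe. Your added details---the hybrid large sieve applied to the $M^*$ factors (with the $q\le\min(H_1,H_2)$ restriction only helping) and the second, Weil-type term passing trivially through the $y_i$ and $r_i$ integrations before absorbing $\|\alpha\|\,\|\beta\|$ into $\sum_{n\le N}|a_n|^2$---are the right bookkeeping.
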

Here the proof is identical to that of Proposition \ref{prop:Ubound} except at \eqref{eq:UBoundAfterBilinearTheoremApplied} we apply Theorem \ref{thm:BilinearKloostermanHversion} instead of Theorem \ref{thm:BilinearKloosterman}, so we omit the details.

We continue with bounding \eqref{eq:Sigma3toU}. We shall use the bound implied by
\eqref{eq:UBoundAppliedToSigma3} for certain ranges of $X_i$.  Specifically, for the values of $X_1$ with $X_1^2 \leq \frac{N}{T_1 + T_2}$, the bound \eqref{eq:UBoundAppliedToSigma3} simplifies as
\begin{multline}
 \ll \sum_{\substack{X_1^2 \ll \frac{N}{T_1 + T_2} \\ X_2 \ll \frac{N}{(T_1 + T_2)^{1-\varepsilon}}}} T_1 T_2 (T_1 + T_2) \Big(\frac{N}{T_1 + T_2} \Big)^{1/2} \Big(X_2^2 + \frac{N}{T_1 + T_2}\Big)^{1/2} (T_1 T_2 )^{\varepsilon} \sum_{n \leq N} |a_n|^2 
 \\
 \ll T_1 T_2 \frac{N^{3/2}}{(T_1 + T_2)^{1/2}} (T_1 T_2 )^{\varepsilon} \sum_{n \leq N} |a_n|^2,
\end{multline}
which is stronger than required for Theorem \ref{thm:spectralsumlocal}.
By symmetry, the same bound holds if $X_2^2 \leq \frac{N}{T_1 + T_2}$.  For the complementary terms with $X_1^2 > \frac{N}{T_1 + T_2}$ and $X_2^2 > \frac{N}{T_1 + T_2}$, Proposition \ref{prop:UboundWithH} simplifies to give
\begin{equation}
 |U| \ll (X_1 X_2)\Big[(X_1 H_2 + X_2 H_1) + \frac{(X_1 X_2)^{1/2} N}{H_1} + \frac{(X_1 X_2)^{1/2} N}{H_2}
  \Big] (X_1 X_2 )^{\varepsilon} \sum_{n \leq N} |a_n|^2.
\end{equation}
The optimal choice is $H_1 = N^{1/2} X_1^{1/4} X_2^{-1/4}$, $H_2 = N^{1/2} X_1^{-1/4} X_2^{1/4}$, and gives
\begin{equation}
 |U| \ll (X_1 X_2) N^{1/2} (X_1^{3/4} X_2^{1/4} + X_1^{1/4} X_2^{3/4})  (X_1 X_2)^{\varepsilon} \sum_{n \leq N} |a_n|^2.
\end{equation}
The contribution of these terms to \eqref{eq:UBoundAppliedToSigma3} is then seen to be
\begin{equation}
 \ll T_1 T_2 (T_1 + T_2) N^{1/2} \Big(\frac{N}{T_1 + T_2} \Big) (T_1 T_2 )^{\varepsilon} \sum_{n \leq N} |a_n|^2 \ll T_1 T_2 N^{3/2} (T_1 T_2 )^{\varepsilon} \sum_{n \leq N} |a_n|^2.
\end{equation}
This is precisely what is required for Theorem \ref{thm:spectralsumlocal}.


\begin{thebibliography}{99}
 \bibitem[Bl]{Blomer} V. Blomer, \emph{
Applications of the {K}uznetsov formula on {$GL(3)$}}.
Invent. Math. 194 (2013), no. 3, 673--729.

\bibitem[BB]{BlomerButtcane} V. Blomer and J. Buttcane, \emph{On the subconvexity problem for L-functions on $GL(3)$}. Preprint, 2015.
{\tt http://arxiv.org/abs/1504.02667}.

\bibitem[BBM]{BlomerButtcaneMaga} V. Blomer, J. Buttcane, and P. Maga, \emph{
Applications of the Kuznetsov formula on $GL(3)$: the level aspect.}
Preprint, 2014.
{\tt http://arxiv.org/abs/1410.5106}.

\bibitem[BKY]{BKY} V. Blomer, R. Khan, and M. Young, \emph{Distribution of mass of holomorphic cusp forms.} 
Duke Math. J. 162 (2013), no. 14, 2609--2644. 

\bibitem[BFG]{BFG} D. Bump, S. Friedberg, and D. Goldfeld,
\emph{Poincar\'e series and {K}loosterman sums for {${\rm SL}(3,{\bf
              Z})$}}.
Acta Arith. 50 (1988), no. 1, 31--89.

\bibitem[Bu1]{Buttcane1} J. Buttcane, \emph{On sums of {$SL(3,\Bbb{Z})$} {K}loosterman sums.} Ramanujan J. 32 (2013), no. 3, 371--419.

\bibitem[Bu2]{Buttcane2} J. Buttcane, \emph{The spectral Kuznetsov formula on $SL(3)$}. Preprint, 2014. {\tt http://arxiv.org/abs/1411.7802}.

\bibitem[DF]{DabrowskiFisher}
R. D{\polhk{a}}browski and B. Fisher,
\emph{A stationary phase formula for exponential sums over {$\bold
              Z/p^m\bold Z$} and applications to {${\rm
              GL}(3)$}-{K}loosterman sums}.
Acta Arith. 80 (1997), no. 1, 1--48.

\bibitem[DI]{DeshouillersIwaniec} J.-M. 
Deshouillers and H. Iwaniec, 
{\it Kloosterman sums and Fourier coefficients of cusp forms}.
Invent. Math. 70 (1982/83), no. 2, 219--288.

\bibitem[DK]{DukeKowalski} W. Duke, and E. Kowalski, 
\emph{A problem of Linnik for elliptic curves and mean-value estimates for automorphic representations.}
With an appendix by Dinakar Ramakrishnan.
Invent. Math. 139 (2000), no. 1, 1--39. 

\bibitem[Ga]{Gallagher} P. X. Gallagher, {\it 
A large sieve density estimate near $\sigma =1$.}
Invent. Math. 11 1970 329--339. 

\bibitem[G]{Goldfeld} D. Goldfeld, {\it Automorphic forms and $L$-functions for the group ${\rm GL}(n,\mathbb{R})$.} With an appendix by Kevin A. Broughan. Cambridge Studies in Advanced Mathematics, 99. Cambridge University Press, Cambridge, 2006.

\bibitem[GK]{GK}  D. Goldfeld, and A. Kontorovich, \emph{On the {${\rm GL}(3)$} {K}uznetsov formula with applications
              to symmetry types of families of {$L$}-functions.} Automorphic representations and L-functions, 263--310, Tata Inst. Fundam. Res. Stud. Math., 22, Tata Inst. Fund. Res., Mumbai, 2013.

\bibitem[I]{IwaniecLargeSieve} H. Iwaniec, {\it Fourier coefficients of cusp forms and the Riemann zeta-function}. Seminar on Number Theory, 1979--1980, Exp. No. 18, 36 pp., Univ. Bordeaux I, Talence, 1980. 

 \bibitem[IK]{IK} H. Iwaniec and E. Kowalski, \emph{Analytic number theory}.
 American
        Mathematical Society Colloquium Publications, 53. American Mathematical
 Society,
        Providence, RI, 2004.

\bibitem[J]{Jutila} M. Jutila, \emph{On spectral large sieve inequalities.}
Funct. Approx. Comment. Math. 28 (2000), 7--18. 

\bibitem[S]{Stevens}
G. Stevens,
\emph{Poincar\'e series on {${\rm GL}(r)$} and {K}loosterman sums}.
Math. Ann. 277 (1987), no. 1, 25--51.

\bibitem[V]{Venkatesh}
A. Venkatesh, 
\emph{Large sieve inequalities for $GL(n)$-forms in the conductor aspect.} Adv. Math. 200 (2006), no. 2, 336--356.

\bibitem[Y]{Young} 
M. Young, \emph{The second moment of $GL(3)\times GL(2)$ $L$-functions at
              special points.} 
Math. Ann. 356 (2013), no. 3, 1005--1028. 
\end{thebibliography}
\end{document}